\newcommand\independent{\protect\mathpalette{\protect\independenT}{\perp}}
\def\independenT#1#2{\mathrel{\rlap{$#1#2$}\mkern2mu{#1#2}}}
\newcommand{\RR}{\mathbb{R}}
\newcommand{\CC}{\mathbb{C}}
\definecolor{ao}{rgb}{0.0, 0.5, 0.0}
\definecolor{myred}{rgb}{0.81, 0.09, 0.13}
\newtheorem{theorem}{Theorem}[section]
\theoremstyle{definition}
\newtheorem{definition}[theorem]{Definition}
\newtheorem{example}[theorem]{Example}
\newtheorem{remark}[theorem]{Remark}
\theoremstyle{plain}
\newtheorem{lemma}[theorem]{Lemma}
\newtheorem{proposition}[theorem]{Proposition}
\newtheorem{corollary}[theorem]{Corollary}
\title{Decomposing conditional independence ideals with hidden variables}
\author{Yulia Alexandr, Kristen Dawson, Hannah Friedman, Fatemeh Mohammadi,\\ Pardis Semnani, and Teresa Yu}
\date{}
\begin{document}

\maketitle

\begin{abstract}
 \noindent We study a family of determinantal ideals whose decompositions encode the structural zeros in conditional independence models with hidden variables. 
  We provide explicit decompositions of these ideals and,  for certain subclasses of models, we show that this is a decomposition into radical ideals by 
  displaying Gr\"obner bases
  for the components. We 
 identify conditions under which
   the components
  are prime, and establish formulas for the dimensions of these prime 
  ideals.
 We show that the components in the decomposition can be 
grouped
 into equivalence classes defined by their combinatorial structure, 
 and we derive a closed formula for the number of such~classes.
\end{abstract}

\renewcommand{\thefootnote}{}
\footnotetext{\noindent{\!\!\!\!\!\!\!\!\!\!\!\small{Keywords:}} Conditional independence models; Determinantal ideals; Gr\"obner bases; Prime decomposition. 
\\
\noindent{\small{2020 Mathematics Subject Classification:}} 
13C40, 13P10, 13P15, 14M12, 62H05}

{\hypersetup{linkcolor=black}}

\section{Introduction}
\subsection{Motivation}
Conditional independence is a key notion in statistical modeling~\cite{studeny2006probabilistic}, offering a structural interpretation for Markov fields and graphical models~\cite{maathuis2018handbook}. Conditional independence (CI) models have been extensively studied in algebraic statistics~\cite{drton2008lectures, sullivant2023algebraic} as well as in combinatorics~\cite{andersson1993lattice, caines2022lattice}.
One of the main questions in this context is the characterization of probability distributions that satisfy a given collection of CI statements. In a more general framework, some of the random variables in a CI model may be unobserved (or hidden), while others are observed. In this setting, the main question becomes whether certain dependencies among the observed variables arise from constraints among the hidden variables~\cite{Steudel-Ay}. This problem has an algebraic analogue in terms of the properties of the associated CI ideal~\cite{drton2008lectures, sullivant2023algebraic}. More precisely, the decomposition of this ideal leads to inferring additional (in)dependencies among random variables. Such inferences are known as CI properties~\cite{pearl1989conditional}, and our focus will be on the intersection property.
\medskip

The joint distribution of $Y, Y_1, Y_2, H_1,H_2$  is said to satisfy the generalized {\it intersection property}~if
\begin{eqnarray}\label{eq:intersection_property}
Y \independent Y_1 \mid \{Y_2, H_1\} \quad \text{and} \quad Y \independent Y_2 \mid \{Y_1, H_2\} \quad \Rightarrow \quad Y \independent \{Y_1, Y_2\}\mid \{H_1,H_2\}.
\end{eqnarray}
In the classical setting where $H_1 = H_2$ and these variables are observed, the intersection property is known to hold under the assumption of strictly positive densities~\cite{pearl2009causality}. A complete characterization of the necessary and sufficient conditions for the validity of the intersection property in the case of distributions with a continuous density is given in~\cite{peters2015intersection}. However, corresponding results remain unknown when hidden variables are present.

In the absence of the variables $H_1$ and $H_2$, the corresponding CI ideals are generated by binomials, and such ideals (together with their decompositions) have been extensively studied~\cite{fink2011binomial,herzog2010binomial,rauh2013generalized,swanson2013minimal}. The intersection property without $H_1$ and $H_2$ is investigated in~\cite{fink2011binomial,herzog2010binomial}, while~\cite{swanson2013minimal} analyzes the case $H_1 = H_2$ with both variables observed. This setup also gives rise to binomial ideals and leads to the inference $X \independent \{Y_1, Y_2\} \mid H_1$.  
By contrast, when $H_1$ and $H_2$ are hidden variables, the defining polynomials of the CI ideal may involve an arbitrary number of terms and reach arbitrarily high degrees, resulting in significant computational challenges~\cite{clarke2020conditional,pfister2021primary}. Although these ideals, as well as those arising in their decompositions, are determinantal, they are generated by higher minors.

In this paper, we focus on the case where $H_1$ and $H_2$ are hidden variables with state spaces of sizes $1$ and $t-1$, respectively. In this setting, the intersection property~\eqref{eq:intersection_property} simplifies to
\begin{equation}\label{eq:intersection_property_simplified}
Y \independent Y_1 \mid Y_2 
\quad \text{and} \quad 
Y \independent Y_2 \mid \{Y_1, H\} 
\quad \Rightarrow \quad 
Y \independent \{Y_1, Y_2\} \mid H.
\end{equation}
where $H:=H_2$ in \eqref{eq:intersection_property}. We study the ideals associated with the CI family 
\begin{equation}\label{C}
C = \big\{\, Y \independent Y_1 \mid Y_2, \;\; 
                     Y \independent Y_2 \mid \{Y_1, H\} \,\big\}
\end{equation}
and their primary decompositions. In our setting, the original CI ideals are generated by minors of two different sizes, namely $2$-minors and $t$-minors, where the state space of $H$ is of size $t-1$. We show that the components appearing in their decompositions may be generated by $1$-minors, $2$-minors, $(t-1)$-minors, and $t$-minors. These ideals can be described using combinatorial structures such as grids and hypergraphs, which provide a framework for analyzing their properties. This construction yields a new class of prime mixed determinantal ideals whose Gröbner bases consist precisely of the generating minors. For related but distinct families of such ideals, see~\cite{HS04}.

\subsection{Basic notions and main results} 
We consider three observed random variables $Y$, $Y_1$, and $Y_2$ with finite state spaces of sizes $d$, $k_1$, and $k_2$, respectively. Additionally, we introduce one hidden variable $H$ with a state space of size $t-1$. 
 Throughout this paper, we will assume that $2\leq  t \leq k_2$ and $2\leq k_1$.  The joint distribution of the observed variables $Y, Y_1,$ and $Y_2$ can be expressed by the non-negative tensor $P=(p_{ij\ell})\in\RR^{d\times k_1 \times k_2}$.

To encode the space of such distributions satisfying CI statements, we consider the following setup. Let $\widetilde{X}=(x_{ij\ell})\in\CC^d\otimes\CC^{k_1}\otimes\CC^{k_2}$ denote a $3$-way tensor with indeterminate entries (see Figures~\ref{fig: tensor-matrix-grid: tensor1} and~\ref{fig: tensor-matrix-grid: tensor}), and let $X = (x_{i(j,\ell)})\in\CC^{d\times k_1 k_2}$ denote its flattening to a matrix whose columns are indexed by the Cartesian product $[k_1]\times[k_2]$ (see Figure~\ref{fig: tensor-matrix-grid: matrix}). The entries $\{x_{i(j,\ell)}\}$ represent an unknown probability distribution $p_{ij\ell}$ on the observed variables. Let $\CC[X]=\CC[x_{i(j,\ell)}:i\in[d],(j,\ell)\in[k_1]\times[k_2]]$ denote the polynomial ring with variables given by the entries of $X$.

We let $R_i$ denote the subset $\{i\}\times[k_2]\subset [k_1]\times[k_2]$; given Figure~\ref{fig: tensor-matrix-grid: grid}, we refer to $R_i$ as a \emph{row slice} of $[k_1]\times[k_2]$. Similarly, we let $C_j$ denote the subset $[k_1]\times\{j\}\subset[k_1]\times[k_2]$, and refer to it as a \emph{column slice} of $[k_1]\times[k_2]$.
For any subset $U\subseteq [k_1]\times [k_2]$, let $X_U$ denote the submatrix of $X$, induced by the columns of $X$ indexed by~$U$. Consider the collection of \textit{conditional independence (CI) statements} $C=\{Y\independent Y_1
\mid \{Y_2\}, Y\independent Y_2
\mid \{Y_1, H\}\}$. The ideals corresponding to these statements~are
\begin{align*}
\mathcal I_{Y\independent Y_1
\mid\{Y_2\}} &=\langle 2\text{-minors of } X_{C_j} \text{ for all }j\in [k_2]\rangle \subseteq \CC[X],\\
 \mathcal I_{Y\independent Y_2\mid\{Y_1, H\}} &=\langle t\text{-minors of } X_{R_i} \text{ for all }i\in [k_1]\rangle \subseteq \CC[X],
\end{align*}
respectively. The \textit{conditional independence (CI) ideal} associated to $C$ is defined as
$$\mathcal I_{C} =\mathcal I_{Y\independent Y_1\mid \{Y_2\}} + \mathcal I_{Y\independent Y_2\mid \{Y_1, H\}}\subseteq \CC[X].$$
If $Y,Y_1,Y_2,H$ are random variables that satisfy the CI statements in $C$, then the joint distribution $P=(p_{ij_1j_2})$ of the observed variables $Y,Y_1,Y_2$ lies in the vanishing set of $\mathcal{I}_C$. 
We are interested in the prime decomposition of the CI ideal $\mathcal{I}_C$ and the statistical properties captured by its minimal primes.
\begin{figure}[h]
    \centering
    \begin{subfigure}[t]{\textwidth}
    \centering
        \begin{tikzpicture}[scale=0.8]

            \def\slices{4}
            \def\rows{2}
            \def\cols{3}
            \def\shifth{-1}  
            \def\shiftv{0.5}  
            \def\cellsize{1.5}
            
            \pgfmathsetmacro{\a}{\cellsize*\rows}
            \pgfmathsetmacro{\b}{\cellsize*\cols}
            \pgfmathsetmacro{\sx}{\shifth*\slices}
            \pgfmathsetmacro{\sy}{\shiftv*\slices}

            \coordinate (A) at (0,0);
            \coordinate (B) at (\b,0);
            \coordinate (C) at (\b,-\a);
            \coordinate (D) at (0,-\a);
            
            \coordinate (A2) at (\sx,\sy);
            \coordinate (B2) at (\b+\sx,\sy);
            \coordinate (C2) at (\b+\sx,-\a+\sy);
            \coordinate (D2) at (\sx,-\a+\sy);

            \draw[thick] (A) -- (B) -- (C) -- (D) -- cycle ;      
            \draw[thick] (D2) -- (A2) -- (B2);   
            \draw[thick] (A) -- (A2);
            \draw[thick] (B) -- (B2);
            \draw[thick] (D) -- (D2);
            

            \foreach \j in {1,...,\cols} {
            \foreach \i in {1} {
            \draw[fill = cyan, opacity = 0.7] (\j*\cellsize - \cellsize, -\i*\cellsize) rectangle (\j*\cellsize, -\i*\cellsize + \cellsize);
            }
            }

            \foreach \i in {1,...,\slices} {
            \foreach \j in {1} {
            \draw[fill = cyan, opacity = 0.6] (A) -- (A2) -- (\sx,-\a+\sy+\cellsize) -- (0,-\a+\cellsize) -- cycle;
            }
            }
            
            \draw[fill = cyan, opacity = 0.7] (A2) -- (B2) -- (B) -- (A) -- cycle;

            \draw[fill = yellow, opacity=0.6] (\cellsize,0) rectangle (2*\cellsize, - \cellsize*\rows);
            
            \draw[fill = yellow, opacity=0.6] (\cellsize,0) -- (2*\cellsize,0) -- (\sx + 2*\cellsize,\sy) -- (\sx + \cellsize,\sy) -- cycle;

            \foreach \i in {1,...,\slices} {
            \draw (\i*\shifth, -\a+\i*\shiftv) -- (\i*\shifth, \i*\shiftv) -- (\b+\shifth*\i, \i*\shiftv);
            }
            
            \foreach \i in {1,...,\slices} {
            \draw (\i*\shifth, -\a+\i*\shiftv) -- (\i*\shifth, \i*\shiftv) -- (\b+\shifth*\i, \i*\shiftv);
            }
            
            \foreach \i in {1,...,\rows} {
            \draw (\sx,\sy - \i*\cellsize) -- (0,-\i*\cellsize) -- (\b, -\i*\cellsize);
            }
            
            \foreach \i in {1,...,\cols} {
            \draw (\sx + \i*\cellsize,\sy) -- (\i*\cellsize,0) -- (\i*\cellsize, -\a);
            }

            \foreach \i in {1,...,\rows} {
            \foreach \j in {1,...,\cols} {
            \node at (\j*\cellsize - \cellsize/2, -\i*\cellsize + \cellsize/2) {$x_{4\i\j}$};
            }
            }

            \foreach \i in {1,...,\slices} {
            \foreach \j in {1,...,\rows} {
            \node at (\sx-\i*\shifth + \shifth/2,   \sy - \i*\shiftv -\j*\cellsize + \cellsize/2) {$x_{\i\j1}$};
            }
            }
            
            \foreach \i in {1,...,\slices} {
            \foreach \j in {1,...,\cols} {
            \node at (\sx + \cellsize*\j -\i*\shifth - 4*\cellsize/5,   \sy - \i*\shiftv +\shiftv/2) {$x_{\i1\j}$};
            }
            }
            
            \end{tikzpicture}
            \caption{The $d \times k_1 \times k_2$ tensor $\widetilde{X}$ for the values $d=4, k_1=2, k_2=3$.}
            \label{fig: tensor-matrix-grid: tensor1}
    \end{subfigure}\\
    ~
    \begin{subfigure}[t]{\textwidth}
    \vspace{5mm}
        \centering
        \begin{tikzpicture}[scale=0.6]

        \def\rows{2} 
        \def\cols{3} 
        \def\slices{4} 
        \def\cellsize{1.5}
        \pgfmathsetmacro\shifth{\cellsize*\cols + 0.2} 
        \def\shiftv{0.6} 
        
        \foreach \i in {1,...,\slices} {
          \begin{scope}[shift={(\shifth*\i,-\shiftv*\i)}]
            \draw[thick] (0,0) rectangle (\cellsize*\cols,-\cellsize*\rows);
            \foreach \j in {1,...,\rows} {
                \foreach \l in {1,...,\cols} {
                \ifnum\j=1\relax 
                \draw[fill=cyan, opacity = 0.7] (\cellsize*\l-\cellsize,-\cellsize*\j+\cellsize) rectangle (\cellsize*\l,-\cellsize*\j);
                \fi
                \ifnum \l = 2\relax
                \draw[fill=yellow, opacity=0.6] (\cellsize*\l-\cellsize,-\cellsize*\j+\cellsize) rectangle (\cellsize*\l,-\cellsize*\j);
                \else 
                \draw (\cellsize*\l-\cellsize,-\cellsize*\j+\cellsize) rectangle (\cellsize*\l,-\cellsize*\j);
                \fi
                \node at (\cellsize*\l-\cellsize/2,-\cellsize*\j+\cellsize/2) {$x_{\i\j\l}$};
                }
            }
          \end{scope}
        }

        \pgfmathsetmacro{\totx}{\shifth*\slices}
        \pgfmathsetmacro{\toty}{\shiftv*\slices}
        \foreach \x/\y in {0/0,\cellsize*\cols/0,0/-\cellsize*\rows,\cellsize*\cols/-\cellsize*\rows} {
              \pgfmathsetmacro{\xstart}{\x+\shifth}
              \pgfmathsetmacro{\ystart}{\y-\shiftv}
              \pgfmathsetmacro{\xend}{\x+\totx}
              \pgfmathsetmacro{\yend}{\y-\toty}
              \draw[dashed] (\xstart,\ystart) -- (\xend,\yend);
        }
        
        \end{tikzpicture}
        \caption{The $d$ slices of tensor $\widetilde{X}$, each of size $k_1 \times k_2$.}
    \label{fig: tensor-matrix-grid: tensor}
    \end{subfigure}\\
    ~ 
    \begin{subfigure}[b]{0.5\textwidth}
    \vspace{5mm}
    \centering
    \begin{tikzpicture}[scale = 0.6]
        \def\rows{2} 
        \def\cols{3} 
        \def\slices{4} 
        \def\cellsize{1.8}
        \pgfmathsetmacro\shifth{\cellsize*\cols + 0.2} 
        \def\shiftv{0.6} 
        
          \begin{scope}[shift={(0,0)}]
            \draw[thick] (0,0) rectangle (\cellsize*\cols*\rows,-\cellsize*\slices);
            \foreach \i in {1,...,\slices}{
                \foreach \l in {1,...,\cols} {
                    \foreach \j in {1,...,\rows} {
                      \ifnum\j=1\relax
                      \draw[fill = cyan, opacity=0.7] (\cellsize*\l*\rows - \cellsize*\rows + \cellsize*\j-\cellsize,-\cellsize*\i+\cellsize) rectangle (\cellsize*\l*\rows - \cellsize*\rows + \cellsize*\j,-\cellsize*\i);
                      \fi
                      \ifnum \l = 2\relax
                      \draw[fill = yellow, opacity=0.6] (\cellsize*\l*\rows - \cellsize*\rows + \cellsize*\j-\cellsize,-\cellsize*\i+\cellsize) rectangle (\cellsize*\l*\rows - \cellsize*\rows + \cellsize*\j,-\cellsize*\i);
                      \else
                      \draw (\cellsize*\l*\rows - \cellsize*\rows + \cellsize*\j-\cellsize,-\cellsize*\i+\cellsize) rectangle (\cellsize*\l*\rows - \cellsize*\rows + \cellsize*\j,-\cellsize*\i);
                      \fi
                       \node at (\cellsize*\l*\rows - \cellsize*\rows + \cellsize*\j-\cellsize/2,-\cellsize*\i+\cellsize/2) {$x_{\i(\j,\l)}$};
                    }
                }
            }
          \end{scope}
    \end{tikzpicture}
    \caption{The $d\times k_1 k_2$ matrix $X$, a flattening of $\widetilde{X}$.}
    \label{fig: tensor-matrix-grid: matrix}
    \end{subfigure}%
    ~
    \begin{subfigure}[b]{0.5\textwidth}
            \vspace{5mm}
            \centering
            \begin{tikzpicture}[scale=0.8]
            \def\rows{2} 
            \def\cols{3} 
            \def\cellsize{1.5}
            \tikzset{
                dot/.style={circle, fill, inner sep=0pt, minimum size=6pt},
                circled/.style={circle, draw, inner sep=4pt}
            }
            \begin{scope}]
            \foreach \x in {0,...,2}{
                \node at (\cellsize*\x,1.9) {}; 
                \pgfmathtruncatemacro{\label}{int(\x + 1)};
                \node[label, font=\small] at (\cellsize*\x,\cellsize*1+0.6) {\label};
            }
            \node at (-0.8,\cellsize*1) {1};
            \node at (-0.8,0) {2};
        

            \draw[fill = cyan, opacity = 0.7] (-0.2,\cellsize + 0.2) rectangle (\cellsize*2 + 0.2,\cellsize-0.2);
            \draw[fill = yellow, opacity = 0.6] (-0.2 + \cellsize,\cellsize + 0.3) rectangle (\cellsize + 0.2,-0.3);

                \foreach \x in {0,...,2}{

                \node[dot] at (\cellsize*\x,\cellsize*1) {};
                \node[dot] at (\cellsize*\x,0) {};
                
            }

            \end{scope}
            \end{tikzpicture}
            \caption{The labeled grid $[k_1] \times [k_2]$.}
            \label{fig: tensor-matrix-grid: grid}
        \end{subfigure}

\caption{In the figures, $d=4$, $k_1 = 2$ and $k_2 = 3$. The rows and columns of the grid in (d) correspond to the rows and columns of each slice of $\widetilde{X}$ in (b) respectively. For $i=1$, the blue area indicates (a) $\widetilde{X}_{R_i}$, (b) $\widetilde{X}_{R_i}$, (c) $X_{R_i}$, and (d) $R_i$. For $j=2$, the yellow area indicates (a) $\widetilde{X}_{C_j}$, (b) $\widetilde{X}_{C_j}$, (c) $X_{C_j}$, and (d) $C_j$. Green areas belong to both the blue and yellow areas.}
\label{fig: tensor-matrix-grid}
\end{figure}

There is a rich combinatorial structure associated with the CI ideal $\mathcal{I}_C$, arising from its interpretation as a determinantal edge ideal. A \emph{hypergraph} $H$ with the vertex set $[k_1] \times [k_2]$ is a collection of subsets of $[k_1] \times [k_2]$. For subsets $A \subseteq [d]$ and $B \subseteq [k_1] \times [k_2]$ of the same size, we denote by $[A\mid B]$ the minor of $X$ formed by selecting rows indexed by $A$ and columns indexed by $B$. The \textit{hypergraph ideal} associated to $H$ is defined as
\begin{eqnarray}\label{eq:ideal_hypergraph}
    I(H):=\{[A\mid B] : A\subseteq[d], B\in H, |A| = |B|\}\subseteq \CC[X].
\end{eqnarray}

\noindent From this definition, we see that the ideal $\mathcal{I}_C$ we defined earlier is a hypergraph ideal $I(\Delta)$, where $$\Delta = \Delta(k_1, k_2, t, 2) := \left\{\binom{R_i}{t}, \binom{C_j}{2} : i\in[k_1], j\in[k_2]\right\}.$$
Here, $\binom{R_i}{t}$ and $\binom{C_j}{2}$ denote the collections of all $t$-element subsets of $R_i$ and all $2$-element subsets of $C_j$, respectively, for all $i\in[k_1]$ and $j\in[k_2]$.

A \emph{zero set} $S \subseteq [k_1] \times [k_2]$ determines  entries in the tensor $P$ which we set to zero, indicating those states of $Y_1 \times Y_2$ that have zero probability. 
Our main result is a decomposition of the ideal $\mathcal{I}_C$ into a family of ideals $I_S$, each associated with a zero set $S \subseteq [k_1] \times [k_2]$, and defined explicitly in Definition~\ref{def:IS-for-k_1=2}. The precise statement is as follows.
\begin{theorem}\label{thm:main-result}
Let $2 \leq k_1$ and $2 \leq t \leq \min\{k_2,d\}$, with $d$, $k_1$, and $k_2$ arbitrary. Then  
$\sqrt{\mathcal{I}_C} = \bigcap_{S \subseteq [k_1] \times [k_2]} \sqrt{I_S}$.  
In the special case where $k_1 = 2$, 
the minimal components in this decomposition are radical, i.e.,  
$\sqrt{\mathcal{I}_C} = \bigcap_{ S\subseteq [k_1] \times [k_2]} I_S$.  
Moreover, if $d = t$, then the minimal components are~prime.
\end{theorem}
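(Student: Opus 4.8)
The plan is to prove the three assertions in sequence, building each on the previous. The first and most substantial task is the radical-ideal decomposition $\sqrt{I_C} = \bigcap_{S \subseteq [k\ell]} \sqrt{I_S}$. I would approach this geometrically: the variety $V(I_C)$ consists of all matrices $X \in \CC^{d \times k\ell}$ such that every $t$-minor of each row block $X_{R_i}$ vanishes and every $2$-minor of each column block $X_{C_j}$ vanishes. The key observation is that for a fixed matrix $X$ in this variety, the set $S = S(X) := \{(i,j) \in \mathcal{G} : X_{C_{(i,j)}} = 0\}$ of zero columns is well-defined, and one stratifies $V(I_C)$ according to this zero pattern. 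The definition of $I_S$ (Definition~\ref{def:IS-for-k=2}) should be engineered precisely so that $V(I_S)$ is the Zariski closure of the stratum $\{X \in V(I_C) : S(X) = S\}$, or at least so that $V(I_S) \subseteq V(I_C)$ and every $X \in V(I_C)$ lies in some $V(I_S)$. Establishing the inclusion $\bigcap_S \sqrt{I_S} \subseteq \sqrt{I_C}$ amounts to checking $V(I_C) \supseteq V(I_S)$ for each $S$, i.e. that the generators of $I_C$ vanish on $V(I_S)$; the reverse inclusion $V(I_C) \subseteq \bigcup_S V(I_S)$ is the stratification statement, proved by taking an arbitrary point and showing it satisfies the defining equations of $I_{S(X)}$.

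**The radicality and Gröbner basis step.**

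For the second assertion — that when $s = k = 2$ the components are radical, so $\sqrt{I_C} = \bigcap_S I_S$ — the plan is to exhibit, for each $S$, a Gröbner basis of $I_S$ consisting exactly of the minors that generate it (as the introduction promises), with respect to a suitably chosen term order (a diagonal or lexicographic order adapted to the grid structure). Once the generating minors form a Gröbner basis, the initial ideal $\LT(I_S)$ is a squarefree monomial ideal: each minor has a squarefree leading term (products of distinct matrix entries along a diagonal), and when $k = 2$ the column blocks $X_{C_j}$ are $d \times 2$ matrices, so their $2$-minors are $2 \times 2$ determinants with squarefree leading terms, and no two leading terms share a "crossing" that would force a non-squarefree lcm in the relevant S-polynomial reductions. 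Since a polynomial ideal whose initial ideal is squarefree is automatically radical, this yields radicality of each $I_S$, and combined with the first assertion gives $\sqrt{I_C} = \bigcap_S I_S$. The main technical work here is the Buchberger-criterion verification that all S-polynomials among the generating minors reduce to zero; I expect the restriction $k = 2$ (hence $s = 2$) to be exactly what keeps the combinatorics of overlapping minors tractable, so that standard straightening-law / Plücker-relation arguments close the S-pairs.

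**The primality step and the main obstacle.**

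For the final assertion — that when additionally $d = t$ the minimal components $I_S$ are prime — the plan is to identify each minimal $I_S$ with (a coordinate shadow of) a well-understood prime determinantal ideal. When $d = t$, the $t$-minors of a row block $X_{R_i}$ (which is a $d \times \ell$ matrix, i.e. $t \times \ell$) are the maximal minors, and maximal-minor ideals of generic matrices are prime (classical, via the theory of determinantal rings / the Eagon–Hochster results or simply because the generic determinantal variety is irreducible). The strategy is then to show that $V(I_S)$ for a minimal $S$ is irreducible — e.g. by writing it as the image of an irreducible parameter space (a product of matrix spaces encoding the rank-$\leq 1$ column conditions and the rank-$\leq (t-1)$ row conditions compatible with the zero pattern $S$), and that $I_S$ is radical by the Gröbner basis step; a radical ideal with irreducible variety is prime. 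The hard part will be two-fold: first, identifying which $S$ are the minimal ones in the decomposition (one must discard redundant $S$ for which $V(I_S) \subseteq V(I_{S'})$), since primality is only claimed for the minimal components; and second, proving irreducibility of $V(I_S)$ for those minimal $S$ — the zero pattern $S$ couples the row-rank and column-rank conditions in a way that is not obviously a product, so the parametrization may require a careful case analysis of how $S$ meets the rows and columns of $\mathcal{G}$. I expect this irreducibility argument, rather than the invocation of classical primality of maximal-minor ideals, to be the crux of the proof.
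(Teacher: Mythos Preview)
Your overall architecture matches the paper's: geometric stratification for the decomposition, a squarefree Gr\"obner initial ideal for radicality, and a dominant polynomial parametrization from an irreducible source for primality. However, there is a genuine gap in the first step.

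You propose that for $X \in V(I_C)$ one sets $S = S(X)$ to be the set of zero columns and then verifies $X \in V(I_{S(X)})$. This fails in general. The ideal $I_S$ (Definition~\ref{def:IS-for-k=2}) contains, beyond the $1$-, $2$-, and $t$-minor generators you list, a family of $(t-1)$-minors (Step~\ref{def: step t-1 minors}) supported on the subgrid $\mathcal{G}_{\{i,j\},\,NZ(i,S)\cap NZ(j,S)}$ whenever rows $i$ and $j$ each have a zero-column index not shared by the other. These $(t-1)$-minors are \emph{not} among the generators of $I_C$, and a point $X \in V(I_C)$ with zero set $S(X)$ need not satisfy them. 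The paper's proof of Theorem~\ref{thm: decomposition theorem} handles this by an iterative shrinking: starting from $S_0 = S(X)$, whenever some $(t-1)$-minor in $I_{S_{q-1}}$ fails to vanish on $X$, one removes from $S_{q-1}$ the grid entries lying over the symmetric difference $Z(i,S_{q-1})\,\Delta\,Z(j,S_{q-1})$ for the offending row pair $\{i,j\}$, obtaining $S_q \subsetneq S_{q-1}$. The terminal set $S'$ then satisfies $X \in V(I_{S'})$, but the verification that the \emph{new} $t$-minors appearing in $I_{S_q}$ via the hypergraph closure still vanish on $X$ is the nontrivial part (Claim~3 in the paper), and this is where the actual linear-algebra content lives. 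Your plan does not anticipate the $(t-1)$-minors or this refinement procedure.

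For the Gr\"obner-basis step, your sketch is correct in spirit, but the paper bypasses a direct Buchberger verification by invoking a result of Seccia (Theorem~\ref{thm:minorsconsec}) on ideals generated by minors of \emph{consecutive} column blocks; applying it requires first replacing $S$ by a specific representative of its combinatorial type so that $J_S$ is a sum of such consecutive-column determinantal ideals (Corollary~\ref{prop:ISgb} and the surrounding discussion). For the primality step, your parametrization idea is exactly what the paper carries out in Theorems~\ref{thm:param-empty} and~\ref{thm:ISprime}.
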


Beyond its algebraic significance, Theorem~\ref{thm:main-result} 
also admits a natural statistical interpretation.

\begin{corollary}\label{cor:Iempty int property}
Consider the CI model $C$ from~\eqref{C}, where $Y$, $Y_1$, and $Y_2$ are observed random variables with finite state spaces of sizes $d$, $k_1$, and $k_2$, respectively, and $H$ is a hidden variable with state space of size $t-1$. Assume that $d$, $k_1$, $k_2$, and $t$ are arbitrary integers such that $2 \leq t \leq \min\{k_2, d\}$. Then the intersection property in~\eqref{eq:intersection_property_simplified} holds for these variables.  
More precisely, the component $I_\emptyset$ corresponding to $S = \emptyset$ in the decomposition of $\mathcal{I}_C$ characterizes joint distributions with full support, i.e., those without structural zeros in the probability table. This component contains the polynomials associated with the conditional independence relation 
$Y \independent \{Y_1, Y_2\} \mid H$.
\end{corollary}

The intersection axiom does not hold for general subsets $S\subseteq [k_1] \times [k_2]$ in the presence of structural zeros; see Remark~\ref{I_S:intersection}.

\subsection{Prior related works}

Before presenting the main content, we briefly review some related work. The ideals $\mathcal{I}_C$ with hidden random variables have been studied under several specific parameter choices: for $t = k_2$ in \cite{clarke2020conditional}, for $t = 3$ in \cite{clarke2022conditional}, and for $t = k_2$ and $t = 2$ in \cite{clarke2023matroid}.
In all these cases, the corresponding ideals $I_S$ have generators of only three types: $1$-minors,  $2$-minors and $t$-minors. In contrast, in our setting the ideals $I_S$ involve four types of generators, $1$-minors, $2$-minors, $(t-1)$-minors, and $t$-minors, making the structure more intricate.

\subsection{Outline}  
In Section~\ref{sec:decomposition}, we define a new class of determinantal ideals $I_S$ depending on a subset $S \subseteq [k_1] \times [k_2]$, which represents the set of structural zeroes.  
We prove that the conditional independence ideal $\mathcal \mathcal{I}_C$ decomposes as the intersection of these ideals $I_S$. In Sections~\ref{sec:minimality}, \ref{sec:primeness}, and \ref{sec:dim}, we analyze this decomposition with a focus on the case $k_1 = 2$.
In Section~\ref{sec:minimality}, we characterize the subsets $S \subseteq [k_1] \times [k_2]$ required for a minimal decomposition and present a closed formula for the number of such ideals, up to equivalence of combinatorial types.  
In Section~\ref{sec:primeness}, we show that the natural generating set for the ideals $I_S$, consisting of certain mixed minors, forms a Gr\"{o}bner basis, and conclude that these ideals are radical. We also prove that these ideals are prime in the case where $d = t$.  
In Section~\ref{sec:dim}, we study the dimension of the minimal prime ideals $I_S$ and provide exact formulas.

\medskip
Table~\ref{tab:notation} summarizes the key notation used in the paper. Some notation is introduced in later sections. Any notation not explicitly defined in the main text can be found in this table.
\begin{longtable}{cl}
\caption{Key notation.} \label{tab:notation} \\
\toprule
Symbol & Description\\
\midrule
\endfirsthead

\toprule
Symbol & Description\\
\midrule
\endhead

$[n]$ & The set $\{1,2,\ldots,n\}$\\
$\mathcal{I}_C$ & The original conditional independence ideal\\
$H(S)$ & Hypergraph associated to $S$ \\
$\overline{H}$ & Closure of the hypergraph $H$ \\
$I(H)$ & Ideal associated to hypergraph $H$ \\
$I_{S}$ & Ideal associated to $\overline{H(S)}$ \\
$R_i$ & Row slice $\{i\} \times [k_2]$ of $[k_1] \times [k_2]$\\
$C_j$ & Column slice $[k_1] \times \{j\}$ of $[k_1] \times [k_2]$\\
$Z(r,S)$ & Indices $i \in [k_2]$ such that $(r,i) \in S$\\
$NZ(r,S)$ & Indices $i \in [k_2]$ such that $(r,i) \notin S$\\
$\mathcal{C}(S)$ & Union of sets $[k_1] \times \{i\}$ such that $([k_1] \times \{i\})\cap S=\emptyset$ \\
$(u,v)$ & Combinatorial type of a subset $S$\\
$X$ & The $d\times k_1k_2$ matrix of indeterminates\\
$A_U$ & Submatrix of a matrix $A$ on columns labeled by $U$\\
$F_S$ & The set of minor generators of $I_S$ corresponding to $S$ \\ 
$F_t(X)$ & The set of all $t$-minors of $X$\\
$I_t(X)$ & Ideal generated by $F_t(X)$\\
\bottomrule
\end{longtable}

\section{Decomposition theorem}\label{sec:decomposition}

In this section, we define  hypergraphs associated to the sets $S \subseteq [k_1] \times [k_2]$ of structural zeros. 
We then give the decomposition of the CI ideal $\mathcal{I}_C$ in terms of the ideals of these hypergraphs.

\begin{definition}\label{def: step closure}
 Given a hypergraph $H$, we define its \emph{closure}, denoted by $\overline{H}$, as follows. Set $H_0 := H$, and for each integer $q \geq 1$, define
$$H_q := H_{q-1} \cup \{\{i_1,\ldots,i_r,j\} : \text{ for any }\{i,j\}, \{i_1,\ldots,i_r,i\}\in H_{q-1}\} \text{ for all $q\geq 1$}.$$
Let $n\in\mathbb{N}$ be the index such that $H_{n+1}=H_{n}$. We then define the closure of $H$ as $\overline{H} := H_n$.
We~let 
\[Z(r,S) := \{\, i \in [k_2] : (r,i) \in S \,\}
\quad\text{and}\quad 
NZ(r,S) := \{\, i \in [k_2] : (r,i) \notin S \,\},\]
where $Z(r,S)$ denotes the set of column indices $i$ such that the entry in row $r$ and column $i$ lies in the marked zero set $S$, and $NZ(r,S)$ denotes the complementary set of column indices for which this is not the case.
\end{definition}

\begin{definition}\label{def:IS-for-k_1=2}
 
Let $2 \leq  k_1$ and $2\leq t \leq \min\{k_2,d\}$.
Given a subset $S \subseteq [k_1] \times [k_2]$, we let $H(S)$ be the hypergraph on $[k_1] \times [k_2]$ with the following hyperedges:

\begin{enumerate}
    \item $1$-subsets of $S$; \label{def: step loops} 
       \item $2$-subsets $\{(i_1,j),\, (i_2, j)\}$ not intersecting $S$ for $j\in [k_2]$;
       \label{def: step 2-minors} 
    \item $(t-1)$-subsets of $\{i,j\} \times (NZ(i,S) \cap NZ(j,S))$ with $|Z(i,S) \backslash Z(j,S)|, \,|Z(j,S) \backslash Z(i,S)| \geq 1$;\label{def: step t-1 minors} 
    \item $t$-subsets $\{(i, j_1), \ldots, (i, j_t)\}$ not intersecting $S$. \label{def: step t minors} 

\end{enumerate}

We let $\overline{H(S)}$ be the closure of the hypergraph $H(S)$ and define $I_S$ to be the associated hypergraph ideal of  $\overline{H(S)}$ as defined in \eqref{eq:ideal_hypergraph}, i.e., $I_S\coloneqq I(\overline{H(S)})$.
\end{definition}

The ideals $I_S \subseteq \mathbb{C}[X]$ are defined by rank conditions on certain submatrices of $X$, so they are determinantal ideals. We can write them using the following notation in the case of $k_1 =2$.

For any submatrix $Y$ of $X$, let $F_t(Y)$ denote the set of $t$-minors of $Y$, and let $I_t(Y)$ represent the ideal in the ring $\mathbb{C}[X]$ generated by $F_t(Y)$. Recall that $X_U$ denotes the submatrix of $X$ indexed by the columns in the set $U \subseteq [k_1] \times [k_2]$.

\begin{definition}\label{def:k_1=2}
Let $k_1 = 2$ and $2\leq t \leq \min\{k_2,d\}$.
Let $\mathcal{C}(S)$ denote the union of all column slices $C_i$ not intersecting the zero set, that is, $\mathcal{C}(S) = \bigcup_{(1,i),(2,i) \notin S} C_i$.
The ideals $I_\emptyset$ and $I_S$ are generated by the sets $F_\emptyset$ and $F_S$, respectively; these sets are defined as
\begin{align*}
    F_\emptyset &\coloneqq \bigcup_{i \in [k_2]} F_2(X_{C_i}) \cup F_t(X),\\
    F_S & \coloneqq F_1(X_S) \cup \!\!\!\!\!\!\!\!
    \bigcup_{\substack{i\in\{1,2\}\\ (1,i),(2,i) \notin S}}
    \!\!\!\!\!\!\!\!
    F_2(X_{C_i}) \cup F_{t-1}(X_{\mathcal C(S)}) \cup F_t(X_{((R_1) \setminus S) \cup \mathcal C(S)})
    \cup 
    F_t(X_{((R_2)  \setminus S) \cup \mathcal C(S)}),
\end{align*}
where the set of $(t\!-\!1)$-minors $F_{t-1}(X_{\mathcal{C}(S)})$ is contained in $F_S$ only if, 
for each $i \in \{1,2\}$, there exists a $j \in [k_2]$ such that $(i,j) \in S$ and 
$([2]\setminus \{i\}, j) \notin S$; that is, 
$|Z(1,S) \setminus Z(2,S)| \ge 1$ and $|Z(2,S) \setminus Z(1,S)| \ge 1$.
\end{definition}

Note that in $F_S$, we may include in our generating set $t$-minors of $X_{(R_1 \setminus S) \cup \mathcal C(S)}$ or $t$-minors of $X_{R_1 \cup \mathcal C(S)}$. The additional $t$-minors in $F_t(X_{R_1 \cup \mathcal C(S)}) \setminus F_t(X_{(R_1\setminus S) \cup \mathcal C(S)})$ are contained in the ideal $I_1(X_S)$; see Remark \ref{remark: some minors are redundant}.

\begin{remark}\label{I_S:intersection}
The set of generators $F_\emptyset$ of $I_\emptyset$ contains all $t$-minors of $X$, which correspond to the conditional independence statement $Y \independent \{Y_1, Y_2\} \mid H$. Such a statement requires that all $t$-minors of $X$ lie in the ideal, which is not necessarily the case for arbitrary $S$. 

\end{remark}

\begin{example}
Let $k_1 = 2$, $k_2 = 5$, and $t = 4$. Fix $S = \{(1,1), (2,2)\}$, as in Figure~\ref{fig:k_1=2-hypergraph-example}. 
\begin{figure}[h]
       \centering
    \begin{tikzpicture}[scale=0.8]
    \tikzset{
        dot/.style={circle, fill, inner sep=0pt, minimum size=6pt},
        circled/.style={circle, draw, inner sep=4pt}
    }
    \foreach \x in {0,...,4}{
        \node at (\x,1.9) {}; 
        \pgfmathtruncatemacro{\label}{int(\x + 1)};
        \node[label, font=\small] at (\x,1.6) {\label};
    }
    \node at (-0.8,1) {1};
    \node at (-0.8,0) {2};

    \foreach \x in {0,...,4}{
        \node[dot] at (\x,1) {};
        \node[dot] at (\x,0) {};
    }
    \node[circled] at (0,1) {};
    \node[circled] at (1,0) {};

\end{tikzpicture}
\caption{The labeled grid $[2] \times [5]$ for $S=\{(1,1), (2,2)\}$.}
\label{fig:k_1=2-hypergraph-example}
\end{figure}
The hypergraph $H(S)$ contains the following hyperedges:
\begin{enumerate}
    \item[1.  ] $\{(1,1)\}, \{(2,2)\}$ from Step~\ref{def: step loops};
    \item[2.  ] $\{(1,3), (2,3)\}, \{(1,4), (2,4)\}, \{(1,5), (2,5)\}$ from Step~\ref{def: step 2-minors};
    \item[3.  ] all 3-subsets of $\{1,2\} \times \{3,4,5\}$ from Step~\ref{def: step t-1 minors};
    \item[4.  ] $\{1\} \times \{2,3,4,5\}$ and $\{2\} \times \{1,3,4,5\}$ from Step~\ref{def: step t minors}.
\end{enumerate}
Taking the closure, we additionally obtain all $4$-subsets of $\{(1,2)\} \cup (\{1,2\} \times \{3,4,5\})$ and 
$\{(2,1)\} \cup (\{1,2\} \times \{3,4,5\})$ as new hyperedges in $\overline{H(S)}$. Computation in 
\texttt{Macaulay2}~\cite{M2} confirms that, when $d=4$, the following 
associated ideal is prime:
\begin{align*}
    I_S &= I_1(X_{\{(1,1), (2,2)\}}) + \sum_{i \in \{3, 4, 5\}} I_2(X_{C_i}) + I_{3}(X_{\{1,2\} \times \{3,4,5\}}) \\
    &+ I_4(X_{\{(1,2)\} \cup (\{1,2\} \times \{3,4,5\})})
    +
    I_4(X_{\{(2,1)\} \cup (\{1,2\} \times \{3,4,5\})}).
\end{align*}
\end{example}

\begin{remark} \label{remark: some minors are redundant}
   Before presenting the main theorem, we recall some properties of the ideal $I_S$ that will be important for its proof.  For any $2$-subset $\{(i_1,j_1), (i_2, j_2)\} \in H(S)$, we say that $(i_1, j_1)$ is \textit{identified} with $(i_2, j_2)$ via $S$, reflecting the fact that the ideal $I_S$ contains the corresponding $2$-minors. As a consequence, for any matrix $A$ in the variety associated with $I_S$, denoted $V(I_S)$, the columns $A_{(i_1,j_1)}$ and $A_{(i_2, j_2)}$ are either zero or scalar multiples of each other.

In Steps~\ref{def: step 2-minors} and~\ref{def: step t minors} of Definition~\ref{def:IS-for-k_1=2}, we consider only those hyperedges that do not contain elements of~$S$, as Step~\ref{def: step loops} guarantees that $x_{i(j_1,j_2)} \in I_S$ for all $i \in [d]$ and $(j_1, j_2) \in S$. Therefore, any minor of size at least 2 containing $x_{i(j_1, j_2)}$ will be redundant in the generating set of $I_S$.
\end{remark}

In general, we obtain the following decomposition theorem. 
The minimal components of this decomposition are always radical for $k_1 = 2$ and prime for $k_1 = 2$ and $d = t$, as shown in Section~\ref{sec:primeness}. However, for $k_1 \geq 3$, they may not be prime (see Example~\ref{example: I_S is not prime if k>=2}).

\begin{theorem} \label{thm: decomposition theorem}
    In the case of $2 \leq k_1$ and $2\leq t \leq \min\{k_2, d\}$, we have 
    \begin{align} \label{eq: decomposition}
        \sqrt{\mathcal{I}_C} = \bigcap_{S\subseteq [k_1]\times [k_2]} \sqrt{I_S}.
    \end{align}
\end{theorem}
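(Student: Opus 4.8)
The plan is to prove the two inclusions of \eqref{eq: decomposition} separately, passing to varieties via the Nullstellensatz so that $\sqrt{I_C} = \bigcap_S \sqrt{I_S}$ becomes the set-theoretic identity $V(I_C) = \bigcup_{S\subseteq[k\ell]} V(I_S)$. For the inclusion $\bigcap_S \sqrt{I_S} \subseteq \sqrt{I_C}$, i.e.\ $V(I_S) \subseteq V(I_C)$ for every $S$, I would show that each generator of $I_C$ lies in $\sqrt{I_S}$; concretely, $I_C = I(\Delta)$ is generated by the $t$-minors of the row-submatrices $X_{R_i}$ and the $s=2$-minors of the column-submatrices $X_{C_j}$, and one checks that each such minor vanishes on $V(\overline{H(S)})$. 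The closure operation of Definition~\ref{def: step closure} is designed exactly so that a generator of $I_C$ is forced into $I_S$: for the $2$-minors of $X_{C_j}$, either $C_j \cap S = \emptyset$ (so the minor is literally in $F_S$ by Step~\ref{def: step 2-minors}) or $C_j$ meets $S$ (so one of the two entries is a loop variable from Step~\ref{def: step loops}, and the $2$-minor lies in $I_1(X_S)$); for the $t$-minors of $X_{R_i}$ one argues similarly, using Step~\ref{def: step t minors} together with the redundancy observation of Remark~\ref{remark: some minors are redundant} and the closure to absorb minors touching identified columns. This direction is essentially bookkeeping with the definition of $\overline{H(S)}$.

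The substantive direction is $\sqrt{I_C} \subseteq \bigcap_S \sqrt{I_S}$, i.e.\ $V(I_C) \subseteq \bigcup_S V(I_S)$: given a matrix $A \in V(I_C)$, I must produce a zero set $S$ with $A \in V(I_S)$. The natural choice is $S := \{\, j \in [k\ell] : A_j = 0 \,\}$, the set of columns of $A$ that vanish identically. With this $S$, the loop conditions (Step~\ref{def: step loops}) and the vanishing-column conditions are automatic. I then need to verify the remaining hyperedge conditions of $\overline{H(S)}$ hold at $A$. The $2$-minor conditions on columns $C_j$ with $C_j \cap S = \emptyset$ follow from $A \in V(I_{Y\independent Y_1\mid\{Y_2,H_1\}})$ together with the fact that $s = 2$: each such $X_{C_j}$ has rank $\le 1$ at $A$, so any two of its columns are proportional, and since none is zero they are nonzero scalar multiples of one another — this is the "identification" of columns within a block. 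The $t$-minor conditions on the row-submatrices come from $A \in V(I_{Y\independent Y_2\mid\{Y_1,H_2\}})$. The delicate part is the $(t-1)$-minor condition on $X_{\mathcal C(S)}$ (Step~\ref{def: step t-1 minors}): one must show that if two rows $i,j$ of the grid each have a column that is zero in that row but nonzero in the other, then the submatrix $A_{\mathcal C(S)}$ actually has rank $\le t-2$, a strictly stronger statement than the rank $\le t-1$ one gets directly from the row $t$-minors. Here is where the interaction between the column-identifications and the row rank bounds must be exploited: within each good column-block the two columns of $A$ are proportional, so $A_{\mathcal C(S)}$ is, up to the zero-column and scaling bookkeeping, determined by a $d\times(\text{number of good blocks})$ matrix; combining the two row constraints $\rank A_{R_1\cap \mathcal C(S)} \le t-1$ and $\rank A_{R_2\cap \mathcal C(S)} \le t-1$ together with the existence of the asymmetric zeros forces the drop to $t-2$. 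Finally I would invoke the closure: once all the hyperedges of $H(S)$ are verified at $A$, the closure axiom (which only ever concludes "$\{i_1,\dots,i_r,j\}$ vanishes" from "$\{i,j\}$ vanishes and $\{i_1,\dots,i_r,i\}$ vanishes", a valid rank-theoretic deduction — if columns $i$ and $j$ are proportional and rows on $\{i_1,\dots,i_r,i\}$ are dependent, so are those on $\{i_1,\dots,i_r,j\}$) is automatically satisfied, so $A \in V(\overline{H(S)}) = V(I_S)$.

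I expect the main obstacle to be precisely the $(t-1)$-minor analysis in the second inclusion: proving that the rank of $A_{\mathcal C(S)}$ drops to $t-2$ (rather than merely $t-1$) under the hypothesis $|Z(1,S)\setminus Z(2,S)|\ge 1$ and $|Z(2,S)\setminus Z(1,S)|\ge 1$. The idea is to pick a column $c_1 \in \mathcal G_{1,\,Z(1,S)\setminus Z(2,S)}$ and a column $c_2 \in \mathcal G_{2,\,Z(2,S)\setminus Z(1,S)}$; by construction $A$ has a zero in row-block $R_1$ at $c_1$ but $c_1 \notin S$ (it is nonzero in row-block $R_2$), and symmetrically for $c_2$. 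One then augments $A_{\mathcal C(S)}$ by these two columns, uses the block-proportionality and the two row rank bounds to set up a $t\times t$ minor that must vanish, and extracts from the pattern of forced zeros the conclusion that $t-1$ columns of $A_{\mathcal C(S)}$ are already dependent. This step requires a careful combinatorial argument about which entries of $A$ are zero in which row-blocks, and it is the only place where all four families of generators genuinely interact; everything else reduces to standard facts about ranks of matrices with proportional columns. A secondary, more routine obstacle is keeping track of the redundant $t$-minors noted after Definition~\ref{def:k=2} (the minors in $F_t(X_{R_1\cup\mathcal C(S)})\setminus F_t(X_{(R_1\setminus S)\cup \mathcal C(S)})$), but these are absorbed into $I_1(X_S)$ and cause no real difficulty.
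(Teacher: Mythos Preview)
Your overall architecture is right, and the easy inclusion $V(I_S)\subseteq V(I_C)$ goes through exactly as you say. The problem is in the hard direction: the claim that $A\in V(I_S)$ for the naive choice $S=\{j:A_j=0\}$ is \emph{false}. The $(t-1)$-minor condition on $A_{\mathcal C(S)}$ need not hold for this $S$, so the argument you sketch for the rank drop to $t-2$ cannot succeed in general.

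Here is a concrete counterexample. Take $k=2$, $s=2$, $d=t=3$, $\ell=4$, and let $A$ be the $3\times 8$ matrix with columns
\[
A_1=0,\ A_2=A_3=\begin{pmatrix}1\\1\\0\end{pmatrix},\ A_4=0,\ A_5=A_6=\begin{pmatrix}1\\0\\0\end{pmatrix},\ A_7=A_8=\begin{pmatrix}0\\1\\0\end{pmatrix}.
\]
Then $A\in V(I_C)$: within each column-block the two columns are proportional, and each row-submatrix $A_{R_i}$ has rank $2\le t-1$. The naive zero set is $S=\{1,4\}$, so $Z(1,S)=\{1\}$, $Z(2,S)=\{2\}$, the asymmetric-zero hypothesis of Step~\ref{def: step t-1 minors} is met, and $\mathcal C(S)=\{5,6,7,8\}$. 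But $\rank A_{\mathcal C(S)}=2=t-1$, not $t-2$: the $2$-minor $[12\,|\,57]$ equals $1$. Hence $A\notin V(I_S)$. (Of course $A\in V(I_\emptyset)$ since $\rank A=2$, so the decomposition itself is fine; it is only your choice of component that fails.)

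The paper handles exactly this failure mode. It starts with $S_0=S$ as you do, but then \emph{iteratively shrinks} it: whenever some $(t-1)$-minor required by $I_{S_{q-1}}$ does not vanish on $A$, one identifies the offending pair of grid-rows $\{i,j\}$ and removes from $S_{q-1}$ all the asymmetric-zero positions $\mathcal G_{\{i,j\},\,Z(i,S_{q-1})\,\Delta\,Z(j,S_{q-1})}$, producing $S_q\subsetneq S_{q-1}$. This terminates at some $S'=S_n\subseteq S$ for which no $(t-1)$-minor obstruction remains. The real work is then checking that the \emph{new} $t$-minors introduced into $I_{S'}$ (those whose supports meet the columns just removed from $S$) still vanish on $A$; this is where the nonvanishing $(t-1)$-minor is used constructively, as a witness that certain column spaces coincide. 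In the example above the process yields $S'=\emptyset$ in one step. So the missing idea in your proposal is not a sharper rank argument for the fixed $S$, but rather the recognition that $S$ must be allowed to shrink, together with the inductive verification that the $t$-minor conditions survive each shrinking step.
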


\begin{proof}
To prove the theorem, we consider the associated varieties of the ideals in \eqref{eq: decomposition} and show that $V(\mathcal{I}_C)=\bigcup_{S\subseteq [k_1]\times[k_2]} V(I_S)$. First, let $A\in V(I_S)$ for some $S\subseteq[k_1]\times[k_2]$. Then by Definition~\ref{def:IS-for-k_1=2} (Steps~\ref{def: step loops},~\ref{def: step 2-minors}, and~\ref{def: step t minors}), we have that all $t$-minors of $A$ corresponding to the row slices of $[k_1]\times[k_2]$ and all $2$-minors of $A$ corresponding to the column slices of $[k_1]\times[k_2]$ vanish. Therefore, $A\in V(\mathcal{I}_C)$, implying that $V(I_S)\subseteq V(\mathcal{I}_C)$ for every $S\subseteq[k_1]\times[k_2]$.

For the other containment, let $A \in V(\mathcal{I}_C)$. Define $S := \{i : A_i = 0\}$ as the set of indices corresponding to the zero columns of $A$.

We will show that there exists some subset $S'\subseteq S$ such that $A\in V(I_{S'})$. We construct $S'$ iteratively by removing entries from the current zero set $S$ whenever a $(t-1)$-minor in $I_S$ fails to vanish on $A$. We then prove that $A \in V(I_{S'})$ by verifying that all minors of sizes 1, 2, $t-1$, and $t$ in $I_{S'}$ vanish on $A$.

The precise iterative construction is as follows. 

Let $S_0 := S$. For each $q \geq 1$, we analyze the ideal $I_{S_{q-1}}$ and define the next set $S_q$ as follows: 
\begin{itemize}
    \item[{\rm (i)}] If every $(t-1)$-minor in the generating set of ideal $I_{S_{q-1}}$ vanishes on $A$, we are done since $A\in V(I_{S_{q-1}})$, as shown below.
   \item[{\rm (ii)}]
    Otherwise, there exists a $(t-1)$-minor $[D \mid E]$, obtained either in Step~\ref{def: step t-1 minors} or when taking the closure of $H(S_{q-1})$, such that $[D \mid E]$ does not vanish on the matrix $A$. 
    Moreover, for some $i,j \in [k]$, either 
    $E$ is a subset of $\{i,j\} \times (NZ(i,S_{q-1}) \cap NZ(j,S_{q-1}))$, 
    or $E$ is identified via $S_{q-1}$ with a subset of this set. 
    Let $\mathcal{P}^{(q)}$ denote the set of all such pairs $\{i,j\}$. 
    For each pair $\{i,j\} \in \mathcal{P}^{(q)}$, define 
    \[
        L_{ij}^{(q)} \coloneqq \{i,j\} \times \bigl(Z(i,S_{q-1}) \,\Delta\, Z(j,S_{q-1})\bigr),
    \]
    where $Z(i,S_{q-1}) \,\Delta\, Z(j,S_{q-1})$ denotes the symmetric difference of the index sets $Z(i,S_{q-1})$ and $Z(j,S_{q-1})$. 
    Finally, set
    \[
        S_q := S_{q-1} \setminus \bigcup_{\{i,j\} \in \mathcal{P}^{(q)}} L_{ij}^{(q)}.
    \]
\end{itemize}
The above process terminates after finitely many steps, as elements are removed from the finite set $S$ at each step. Let $n$ denote the final step, and define $S' := S_n$.
We will now prove that $A \in V(I_{S'})$, i.e., all minors in the generating set of $I_{S'}$ vanish on $A$. 

\paragraph{Claim 1: all $(t-1)$-minors in the generating set of $I_{S'}$ vanish on $A$.}
In the process of constructing $S'$, and specifically in step (ii) above, for each $q\geq 1$, we removed every $(t-1)$-minor in the generating set of $I_{S_{q-1}}$ that does not vanish on $A$ by ensuring that the conditions $|Z(i,S_{q}) \setminus Z(j,S_{q})| \geq 1$ and $|Z(j,S_{q}) \setminus Z(i,S_{q})| \geq 1$ do not hold anymore for two indices $i$ and~$j$
from which that $(t-1)$-minor arises, as in Step 3 of Definition \ref{def:IS-for-k_1=2}. As a result, every $(t-1)$-minor in the generating set of $I_{S'}$ vanishes on $A$.

\paragraph{Claim 2: all $2$-minors in the generating set of $I_{S'}$ vanish on $A$.}

Since $A\in V(\mathcal{I}_C)$, all 2-minors of $A$ corresponding to the columns whose indices come from any column slice $C_i$ of $[k_1]\times[k_2]$ vanish.

\paragraph{Claim 3: all $t$-minors in the generating set of $I_{S'}$ vanish on $A$.}  

Note that all the $t$-minors in the generating set of $I_{S_0} = I_S$ vanish on $A$. This is due to the choice of $S$ and the fact that $A\in V(\mathcal{I}_C)$. Now assume all the $t$-minors in the generating set of  $I_{S_{q-1}}$ vanish on $A$ for some $q\geq 1$. We will show this is also true about all the $t$-minors in $I_{S_q}$. We prove this by contradiction. The key idea is that if some $t$-minor in the generating set of $I_{S_{q}}$ fails to vanish on $A$, then its support must involve positions identified with points removed from $S_{q-1}$ during the iterative construction of $S_{q}$. But each such removal is caused by the failure of a $(t-1)$-minor to vanish, which enforces new linear dependencies among the corresponding columns of $A$. We will show that these dependencies and column identifications force all the necessary $t$-minors of $A$ to vanish, leading to a contradiction.

Assume for contradiction that there exists a $t$-subset $U = \{a_1, \ldots, a_t\}$ in $\overline{H(S_{q})}$ such that some $t$-minor $g$ on the columns indexed by $U$ does not vanish on $A$. Assume that for each $u\in [t]$, $a_{u}$ belongs to the column slice $C_{i_u}$ of $[k_1]\times[k_2]$.
Since $U \in \overline{H(S_{q})}$, by construction, there exists some row index $r \in [k_1]$ such that each $a_u$ is identified with the entry $(r,i_u)$, where $(r,i_u) \notin S_{q}$. This follows from the construction rule in Step~\ref{def: step t minors} and the closure process: in order for $U$ to be added to $\overline{H(S_{q})}$, all its entries $a_u$ as well as  $(r,i_u)$ must lie outside $S_{q}$.

If $a_u\in S$ for some $u\in [t]$, then by the definition of $S$, we have $A_{a_u} = 0$, and therefore, the minor $g$ must vanish on $A$, a contradiction. Therefore, we can assume that $a_u\not\in S$ for all $u\in [t]$.

Now if for all $u\in [t]$, we have $(r,i_u) \not\in S_{q-1}$, then by Step~\ref{def: step t minors} of Definition~\ref{def:IS-for-k_1=2}, we have $\{(r,i_u) : u\in [t]\}\in H(S_{q-1})$. But for all $u\in [t]$, $a_u$ and $(r,i_u)$ get identified via $S_{q-1}$. So, this implies that $U \in \overline{H(S_{q-1})}$, a contradiction.
Hence, there must exist some $u \in [t]$ such that $(r,i_u) \in S_{q-1} \backslash S_{q}$. Define the index set  
$$
B \coloneqq \{u \in [t] : (r,i_u) \in S_{q-1} \backslash S_{q}\}.
$$  
For each $u \in B$, since $(r,i_u)$ was removed from $S_{q-1}$ during the construction of $S_{q}$, it must be that $(r,i_u) \in L_{r,r_u}^{(q)}$ for some $r_u \in [k]$ with $\{r, r_u\} \in \mathcal{P}^{(q)}$. But then, by the definition of $\mathcal{P}^{(q)}$, there exists a non-vanishing $(t-1)$-minor $[D_u\mid E_u]$ on $A$ such that $E_u$ is either a subset of the subgrid $\mathcal{G}_{\{r,r_u\}, NZ(r,S_{q-1}) \cap NZ(r_u, S_{q-1})}$ or is identified via $S_{q-1}$ with a subset of this subgrid. Therefore, the columns of $A_{E_u}$ are linearly independent. But now consider any point $b_u$ of the grid outside of $S_{q-1}$ and identified via $S_{q-1}$ with some point on the rows $r$ or $r_u$ of the grid. We have $E_u\cup \{b_u\} \in \overline{H(S_{q-1})}$. So, all the $t$-minors of $A_{E_u\cup \{b_u\}}$ vanish, meaning that $A_{E_u\cup \{b_u\}}$ has rank at most $t-1$, and therefore, the column space of $A_{E_u\cup \{b_u\}}$ is equal to the column space of $A_{E_u}$. Note that this implies the column spaces of $A_{E_u}$ are the same for all $u\in B$.
Now observe the following identifications:  
\begin{itemize}
    \item For $u \in B$, since $a_u \notin S_{q-1}$ and $(r_u, i_u) \notin S_{q-1}$, $a_u$ and $(r_u, i_u)$ are identified via $S_{q-1}$.  
\item For $u \notin B$, both $a_u$ and $(r, i_u)$ lie outside $S_{q-1}$, so $a_u$ and $(r, i_u)$ are identified via $S_{q-1}$.
\end{itemize}
These cases imply that $A_U$ is a subset of the column space of $A_{E_u}$ for any $u\in B$, and therefore, has rank at most $t-1$.
Hence, the minor $g$ must vanish on $A$, contradicting our initial assumption. This completes the proof of Claim 3.

\medskip
The three claims above show that every minor in the generating set of $I_{S'}$ vanishes on $A$. Therefore, $A \in V(I_{S'})$, as desired.
\end{proof}

The following example shows that the decomposition in \eqref{eq: decomposition} is not necessarily minimal.

\begin{example}\label{ex:bigger_k}
Not all subsets $S \subseteq [k_1]\times [k_2]$ are necessary in the decomposition \eqref{eq: decomposition}. For instance, let $k_1=2$, $k_2 = 5$, $t = 4$, and assume $d \geq t$. We claim that $I_{\{(1,1),\, (1,2)\}} \subseteq I_{\{(1,1),\, (1,2),\, (2,2)\}}$.  
\begin{itemize}
    \item For every $i \in [d]$, the variables $x_{i(1,1)}$ and $x_{i(1,2)}$ belong to $I_{\{\{(1,1),\, (1,2),\, (2,2)\}\}}$. 
    \item Moreover, the ideals $I_{\{(1,1),\, (1,2)\}}$ and $I_{\{(1,1),\, (1,2),\, (2,2)\}}$ share the same 2-minors in their generating sets, and neither includes any $(t-1)$-minors.  
\item 
The only $t$-minors appearing as minimal generators in $I_{\{(1,1),\, (1,2)\}}$ but not in $I_{\{(1,1),\, (1,2),\, (2,2)\}}$ are those corresponding to subsets containing $(2,2)$. However, each monomial in such a $t$-minor necessarily contains a variable $x_{i(2,2)}$ for some $i \in [d]$, and since $x_{i(2,2)} \in I_{\{(1,1),\, (1,2),\, (2,2)\}}$, the entire minor must also lie in $I_{\{(1,1),\, (1,2),\, (2,2)\}}$.
\end{itemize}
Despite this containment, neither $I_{\{(1,1),\, (1,2)\}}$ nor $I_{\{(1,1),\, (1,2),\, (2,2)\}}$ is a minimal component in 
\eqref{eq: decomposition}, as $I_\emptyset \subseteq I_{\{(1,1),\, (1,2)\}}$. 
On the other hand, in the inclusion $I_{\{(1,1),\, (2,2),\, (2,3)\}} \subseteq I_{\{(1,1),\, (2,2),\, (2,3),\, (2,4)\}}$, the ideal $I_{\{(1,1),\, (2,2),\, (2,3)\}}$ is a minimal prime component. A full characterization of the minimal components is discussed in Section~\ref{sec:minimality}.
\end{example}
   \begin{figure}[H]
       \centering
    \begin{tikzpicture}[scale=0.8]
    \tikzset{
        dot/.style={circle, fill, inner sep=0pt, minimum size=6pt},
        circled/.style={circle, draw, inner sep=4pt}
    }
        \foreach \x in {0,...,4}{
        \node[dot] at (\x,0) {};
        }
    \foreach \x in {0,...,4}{
        \node[dot] at (\x,1) {};
        \pgfmathtruncatemacro{\label}{int(\x + 1)};
        \node[label, font=\small] at (\x,1.6) {\label};
        }

    \node at (-0.8,1) {1};
    \node at (-0.8,0) {2};        
    \node[circled] at (0,1) {};
    \node[circled] at (1,1) {};
\end{tikzpicture}
\hspace{3cm}
    \begin{tikzpicture}[scale=0.8]
    \tikzset{
        dot/.style={circle, fill, inner sep=0pt, minimum size=6pt},
        circled/.style={circle, draw, inner sep=4pt}
    }
        \foreach \x in {0,...,4}{
        \node[dot] at (\x,0) {};
        }
    \foreach \x in {0,...,4}{
        \node[dot] at (\x,1) {};
        \pgfmathtruncatemacro{\label}{int(\x + 1)};
        \node[label, font=\small] at (\x,1.6) {\label};
        }
    \node at (-0.8,1) {1};
    \node at (-0.8,0) {2};
    \node[circled] at (0,1) {};
    \node[circled] at (1,0) {};
    \node[circled] at (1,1) {};

\end{tikzpicture}
\caption{Labeled grid $[2] \times [5]$ with $S = \{(1,1),\, (1,2)\}$ (left) and $S = \{(1,1),\, (1,2),\, (2,2)\}$ (right).}
\label{fig:nonminimal-example}
\end{figure}

The following example shows that for $k_1 \geq 3$, the minimal components appearing in the decomposition~\eqref{eq: decomposition} may no longer be prime.\begin{example} \label{example: I_S is not prime if k>=2}
Let $k_1 = 3$, $k_2 = 5$, $t = 4$, and $d=4$. Fix $S = \{6, 11, 13\}$.

\begin{figure}[H]
\centering
    \begin{tikzpicture}[scale=0.8]
    \tikzset{
        dot/.style={circle, fill, inner sep=0pt, minimum size=6pt},
        circled/.style={circle, draw, inner sep=4pt}
    }
        \foreach \x in {0,...,4}{
        \node[dot] at (\x,0) {};
        }
    \foreach \x in {0,...,4}{
        \node[dot] at (\x,1) {};
        }

    \foreach \x in {0,...,4}{
        \node[dot] at (\x,2) {};
        \pgfmathtruncatemacro{\label}{int(\x + 1)};
        \node[label, font=\small] at (\x,2.6) {\label};
        }
    \node at (-0.8,2) {1};
    \node at (-0.8,1) {2};
    \node at (-0.8,0) {3};
    \node[circled] at (1,0) {};
    \node[circled] at (3,1) {};
    \node[circled] at (4,2) {};

\end{tikzpicture}
\caption{The labeled grid $[3] \times [5]$ for $S=\{(1,5), \, (2,4), \, (3,2)\}$.}
\label{fig:k=3-hypergraph-example}
\end{figure}
The ideal $I_S$ is minimal among $\{I_S : S \subseteq [k_1]\times [k_2]\}$, as it can be verified that for any $S' \subsetneq S$, we have $I_{S'} \not\subseteq I_S$.\footnote{\url{https://github.com/yuliaalexandr/decomposing-conditional-independence-ideals-with-hidden-variables}} However, $I_S$ is not prime and decomposes into two prime components, $I_1$ and~$I_2$:
\begin{itemize}
    \item The ideal $I_1 = I(\overline{H_1})$ is the ideal of the closure of the hypergraph $$H_1 := H(S) \cup \{2\text{-subsets of } \{1,2,3\} \times \{1,3\} \}.$$

\item  The ideal $I_2=I(\overline{H}_2)$, where the hypergraph $H_2$ is obtained from $H(S)$ by adding all ($t-1$)-subsets of the unions of the ($t-1$)-subsets in $\overline{H(S)}$ that intersect in at least $t-2$ elements,~i.e.,
\begin{align*}
H_2 := H(S) \cup \left\{ \{i_{k_1}, \ldots, i_{k_{t-1}}\} : 
\begin{array}{l}
\text{for any } \{i_1, i_2, \ldots, i_{t-1}\}, \{i_2, \ldots, i_{t-1}, i_t\} \in \overline{H(S)}, \\
\text{with } i_1, \ldots, i_t \text{ all distinct,} \\
\text{and distinct indices } k_1, \ldots, k_{t-1}\in[t]
\end{array}
\right\}.
\end{align*}
where each $i_j$ is a pair of indices in $[k_1]\times [k_2]$.
\end{itemize}

\end{example}

\section{Minimal ideals $I_S$ in the decomposition of $\mathcal{I}_C$} \label{sec:minimality}

Throughout this section, we fix $k_1 = 2$ and $2\leq t \leq \min\{k_2,d\}$. Our main goal in this section is to characterize the subsets $S \subseteq [2]\times [k_2]$ for which the ideal $I_S$ is a minimal component in the decomposition~\eqref{eq: decomposition}. We also provide numerical counts for the number of such subsets $S$.

\subsection{Combinatorial characterization of minimal ideals}



Our first task is describe the sets $S$ such that $I_S$ is minimal in the decomposition~\eqref{eq: decomposition}.
We first prove that $I_S$ is not minimal if $S$ contains any column $C_i$.
\begin{lemma}\label{lem:min-cond2}
 If $C_i \subseteq S$ for some $i \in [k_2]$, then $I_S$ is not minimal among $\{I_S : S \subseteq [2]\times[k_2]\}$.
\end{lemma}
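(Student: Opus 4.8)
The plan is to show that if $C_i \subseteq S$ for some $i\in[\ell]$, then $I_S$ strictly contains $I_{S'}$ for the set $S' := S\setminus C_i$, which implies $I_S$ is redundant in the decomposition and hence not minimal. (Here ``minimal'' means minimal under inclusion among the ideals $\{I_S\}$; if $I_{S'}\subseteq I_S$ with $S'\neq S$, then $I_S$ can be dropped from the intersection, or more precisely it is not a minimal component.) Note that $I_{S'}\neq I_S$ since, for instance, the variables $x_{r,2i-1}$ and $x_{r,2i}$ (the two columns of $\mathcal{G}$ sitting over grid-column $i$) lie in $I_S$ via Step~\ref{def: step loops} but are not among the degree-one generators of $I_{S'}$; more care is needed to see they are genuinely outside $I_{S'}$, which one checks by exhibiting a point of $V(I_{S'})$ with a nonzero entry in one of these columns.

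First I would compare the two generating sets $F_S$ and $F_{S'}$ type by type, using Definition~\ref{def:IS-for-k=2} and Remark~\ref{remark: some minors are redundant}. The degree-one generators: $F_1(X_{S'}) \subseteq F_1(X_S)$ trivially. The $2$-minor generators: since removing grid-column $i$ from the zero set only adds the column $C_i$ back as a potential source of $2$-minors, the $2$-minors of $F_{S'}$ coming from Step~\ref{def: step 2-minors} are those of $F_S$ together with the $2$-minors on $C_i$; but each such $2$-minor on $C_i$ is a polynomial all of whose monomials involve a variable $x_{r,a}$ with $a\in C_i\subseteq S$, hence lies in $I_1(X_S)\subseteq I_S$. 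The $(t-1)$-minors and $t$-minors: these are governed by $\mathcal{C}(S')$ versus $\mathcal{C}(S)$, by $NZ(r,S')$ versus $NZ(r,S)$, and by $R_r\setminus S'$ versus $R_r\setminus S$. In every case the extra columns available to $S'$ but not to $S$ are exactly the columns of $X$ sitting over grid-column $i$, all of which are in $S$; so any minor in $F_{S'}$ that uses such a column expands into monomials each divisible by some $x_{r,a}$, $a\in C_i$, and hence lies in $I_1(X_S)$, while any minor in $F_{S'}$ not using such a column already appears (or is subsumed, by Remark~\ref{remark: some minors are redundant}) in $F_S$. One also has to check the closure step is compatible: since $\overline{H(S')}\supseteq$ the hyperedges generating the extra minors, but all of those ``extra'' minors lie in $I_1(X_S)$, taking closures does not break the inclusion $I_{S'}\subseteq I_S$.

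The one point requiring genuine attention — and the main obstacle — is verifying that the two containments are handled correctly at the level of the combinatorial data attached to rows when $C_i\subseteq S$: removing $C_i$ from $S$ changes $Z(r,S)$ for \emph{both} rows $r=1,2$ simultaneously (it removes index $i$ from both $Z(1,\cdot)$ and $Z(2,\cdot)$), so the condition ``$|Z(1,\cdot)\setminus Z(2,\cdot)|\geq 1$ and $|Z(2,\cdot)\setminus Z(1,\cdot)|\geq 1$'' that decides whether the $(t-1)$-minors are included is unchanged in passing from $S$ to $S'$. Hence the $(t-1)$-minor block of $F_{S'}$ is either empty (if it was empty for $S$) or equals $F_{t-1}(X_{\mathcal{C}(S')}) = F_{t-1}(X_{\mathcal{C}(S)\cup C_i})$, and again every $(t-1)$-minor using a column of $C_i$ lies in $I_1(X_S)$. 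Assembling these observations gives $F_{S'}\subseteq I_S$, hence $I_{S'}\subseteq I_S$; combined with $I_{S'}\neq I_S$ (and, in fact, with $S'\subsetneq S$ so that in the decomposition~\eqref{eq: decomposition} the term $\sqrt{I_S}$ is redundant), we conclude that $I_S$ is not minimal, as claimed.
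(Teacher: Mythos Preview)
Your argument is correct, but it differs from the paper's in the choice of the smaller set. You take $S' = S \setminus C_i$, removing the entire grid-column at once; the paper instead first disposes of the case where one of $Z(1,S)\setminus Z(2,S)$, $Z(2,S)\setminus Z(1,S)$ is empty by observing $I_\emptyset \subsetneq I_S$, and in the remaining case removes only a \emph{single} element $2i$ from $S$, so that $\mathcal{C}(S') = \mathcal{C}(S)$ and the two generating sets differ by exactly the variables $\{x_{j,2i}:j\in[d]\}$.

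Your choice buys you a uniform argument with no case split: removing all of $C_i$ deletes the index $i$ from both $Z(1,\cdot)$ and $Z(2,\cdot)$ simultaneously, so the symmetric-difference condition governing the $(t-1)$-minor block is literally unchanged, and you never need to treat the degenerate case separately. The trade-off is that $\mathcal{C}(S') = \mathcal{C}(S)\cup C_i$ is strictly larger, so you must argue (as you do) that every new minor picked up in $F_{S'}$ uses a column over $C_i$ and therefore lies in $I_1(X_S)$. The paper's choice makes the bookkeeping of $F_S$ versus $F_{S'}$ trivial once the case split is in place, since $\mathcal{C}$ does not change at all. Both routes are short; yours is arguably cleaner. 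Two minor remarks: your discussion of the closure step is unnecessary here since for $k=2$ the explicit description of $F_S$ in Definition~\ref{def:k=2} already incorporates the closure, and strictness follows immediately from homogeneity (a variable cannot lie in an ideal whose degree-one part is spanned by other variables), so the appeal to a point of $V(I_{S'})$ is more than you need.
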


\begin{proof}
   Suppose $S \subseteq [2]\times [k_2]$ is such that $C_i \subseteq S$ for some $i \in [k_2]$. If $Z(1,S) \subseteq Z(2,S)$ or $Z(2,S) \subseteq Z(1,S)$, then $I_\emptyset \subsetneq I_S$, and therefore, $I_S$ is not minimal. Hence, we can assume 
   \begin{align} \label{eq: t-1 condition}
       |Z(1,S) \setminus Z(2,S) | \geq 1 \text{ and } |Z(2,S) \setminus Z(1,S) | \geq 1.
   \end{align} 
   Define $S' = S \backslash \{(2,i)\}$, and note that \eqref{eq: t-1 condition} holds for $S'$ as well. Let $\mathcal{C}(S)$ denote the union of all column slices $C_j$ such that $C_j \cap S = \emptyset$.  Then, by Definition~\ref{def:k_1=2}, the ideals $I_S$ and $I_{S'}$ are generated by the sets $F_S$ and $F_{S'}$, respectively, where
\begin{align*}
        F_S &= F_1(X_S) \cup  \bigcup_{\substack{j \in [k_2],\\ C_j \cap S = \emptyset}} F_2(X_{C_j}) \cup F_{t-1}(X_{\mathcal C(S)}) \cup F_t(X_{R_1 \cup \mathcal C(S)}) \cup F_t(X_{R_2 \cup \mathcal C(S)}),\\
        F_{S'} &= F_1(X_{S'}) \cup  \bigcup_{\substack{j \in [k_2],\\ C_j \cap S' = \emptyset}} F_2(X_{C_j}) \cup F_{t-1}(X_{\mathcal C(S')}) \cup F_t(X_{R_1 \cup \mathcal C(S')}) \cup F_t(X_{R_2 \cup \mathcal C(S')}).
    \end{align*}   
    Note that $C_j \cap S = \emptyset$ if and only if $C_j \cap S' = \emptyset$, implying $\mathcal{C}(S) = \mathcal{C}(S')$. Consequently, we have $F_S \backslash F_{S'} = F_1(X_S) \backslash F_1(X_{S'}) = \{x_{j(2,i)}: j\in [d]\}$, which yields the desired strict inclusion $I_{S'} \subsetneq I_S$.
\end{proof}

Since sets $S$ containing column slices of $[2] \times [k_2]$ are never minimal, we focus on the sets $S$ which do not contain a column slice. 
Our next step will be to group the remaining sets $S$ into equivalence classes where two sets $S$ and $S'$ are considered equivalent if their associated ideals $I_S$ and $I_{S'}$ are isomorphic up to a relabeling of variables. 
This notion of equivalence coincides with the notion of combinatorial equivalence:
\begin{definition}
    Suppose $S\subseteq[2]\times[k_2]$ is a subset such that $C_i\not\subseteq S$ for any $i\in[k_2]$. Let
    \[u=\min\{|R_1\cap S|,|R_2\cap S|\},\qquad v=\max\{|R_1\cap S|,|R_2\cap S|\}.\]
    We call the pair $(u,v)$ the \emph{combinatorial type} of $S$.
\end{definition}

The goal of the rest of this subsection will be to characterize the minimal $I_S$ by combinatorial type. 
This characterization is given by the following definition of a \emph{minimal subset} $S\subseteq[2]\times[k_2]$. We will then
give a series of results that show that $S$ being a minimal subset is equivalent to $I_S$ being minimal among the set of ideals $\{I_S:S\subseteq[2]\times [k_2]\}$.

\begin{definition}\label{def:min}
    A set $S\subseteq[2]\times [k_2]$ with combinatorial type $(u,v)$ is called \emph{minimal} if $S=\emptyset$ or all of the following conditions hold:
    \begin{enumerate}
        \item $C_i\not\subseteq S$ for all $i\in[k_2]$;\label{def:min 1}
        \item $1\leq u\leq v\leq k_2-t+1$;\label{def:min 2} 
        \item If $t=2$, we further require that $u+v=k_2$.
    \end{enumerate}
\end{definition}


We now prove in a series of results that $S$ is minimal if and only if the ideal $I_S$ is minimal. 

\begin{lemma}\label{lem:t=2 min con 3}
    Let $t=2$. Suppose $S$ is nonempty and has combinatorial type $(u,v)$. If $u+v<k_2$, then $I_S$ is not minimal among $\{I_S:S\subseteq[2]\times[k_2]\}$.
\end{lemma}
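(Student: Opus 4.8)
The plan is to exhibit, for every nonempty $S$ of combinatorial type $(u,v)$ with $u+v<\ell$, a subset $S'\subseteq[2\ell]$ with $I_{S'}\subsetneq I_S$, which shows $I_S$ is not minimal among $\{I_S:S\subseteq[2\ell]\}$. After possibly interchanging the two rows of the grid I assume $u=|R_1\cap S|\le|R_2\cap S|=v$. The hypothesis $u+v<\ell$ is used only to produce a ``full column'': since the number of columns $C_i$ with $C_i\cap S=\emptyset$ equals $\ell-|Z(1,S)\cup Z(2,S)|\ge\ell-(u+v)\ge1$, I fix such an index $i_0$, so $2i_0-1,2i_0\in\mathcal{C}(S)$. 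The proof then divides according to whether the block $F_{t-1}(X_{\mathcal{C}(S)})$ appears in the generating set $F_S$; using Definition~\ref{def:min}'s requirement that no column $C_i$ lies entirely in $S$, this happens exactly when $u\ge1$ — indeed $u\ge1$ forces $Z(1,S)\neq\emptyset$ and hence $Z(1,S)\not\subseteq Z(2,S)$ (otherwise some column would be contained in $S$), and symmetrically $v\ge u\ge1$ forces $Z(2,S)\not\subseteq Z(1,S)$, which is precisely the condition for $F_{t-1}(X_{\mathcal{C}(S)})$ to be included.

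For the case $u=0$ I will show $I_\emptyset\subsetneq I_S$, following the mechanism of Lemma~\ref{lem:min-cond2}. Here $S\subseteq R_2$, so $[2\ell]\setminus S=(R_1\setminus S)\cup\mathcal{C}(S)$, and every generator of $I_\emptyset$ lies in $I_S$: each column $2$-minor $F_2(X_{C_i})$ is either a generator of $I_S$ or has each monomial divisible by a variable $x_{jc}$ with $c\in S$ (so lies in $I_1(X_S)\subseteq I_S$); and each $t$-minor of $X$ either uses a column of $S$ — hence lies in $I_1(X_S)$ — or is a $t$-minor of $X_{(R_1\setminus S)\cup\mathcal{C}(S)}$, a generator of $I_S$. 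The inclusion is strict because for any $c\in S$ we have $x_{1c}\in I_S$, while $x_{1c}$ does not vanish at the matrix whose $c$-th column is $e_1$ and whose other columns vanish, which is a point of $V(I_\emptyset)$.

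For the case $u\ge1$, the block $F_{t-1}(X_{\mathcal{C}(S)})=F_1(X_{\mathcal{C}(S)})$ belongs to $F_S$, so $x_{jc}\in I_S$ for every $c\in S\cup\mathcal{C}(S)$. I will take $S'':=S\cup\{2i_0\}\supsetneq S$ and verify $F_{S''}\subseteq I_S$ family by family: the new variables $x_{j,2i_0}$ lie in $F_1(X_{\mathcal{C}(S)})\subseteq I_S$; every full column of $S''$ is a full column of $S$; and since $S''\setminus S\subseteq R_2$ we get $R_1\setminus S''=R_1\setminus S$, $R_2\setminus S''\subseteq R_2\setminus S$, and $\mathcal{C}(S'')\subseteq\mathcal{C}(S)$, so each of $F_{t-1}(X_{\mathcal{C}(S'')})$, $F_t(X_{(R_1\setminus S'')\cup\mathcal{C}(S'')})$, $F_t(X_{(R_2\setminus S'')\cup\mathcal{C}(S'')})$ sits inside the corresponding family of $F_S$. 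Hence $I_{S''}\subseteq I_S$. For strictness I use $x_{1,2i_0-1}$: it lies in $I_S$ since $2i_0-1\in\mathcal{C}(S)$, but it does not vanish at the matrix $X^\star$ whose $(2i_0-1)$-st column is $e_1$ and all of whose other columns are zero, and $X^\star\in V(I_{S''})$ — because $2i_0-1\notin S''\cup\mathcal{C}(S'')$ (as $C_{i_0}\cap S''=\{2i_0\}\neq\emptyset$), so every variable generator of $I_{S''}$ lies in a column other than $2i_0-1$, while every remaining generator is a minor of size at least $2$ and $X^\star$ has rank $1$. Thus $x_{1,2i_0-1}\in I_S\setminus I_{S''}$ and $I_{S''}\subsetneq I_S$.

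The step that I expect to need the most care is the containment $I_{S''}\subseteq I_S$ in the case $u\ge1$. It is mildly counterintuitive that for $t=2$ one should \emph{enlarge} the zero set (whereas Lemma~\ref{lem:min-cond2} shrinks it), and the argument depends on the fact that all variables of the full columns already lie in $I_S$ — which is exactly why the borderline case $u=0$, where $F_{t-1}(X_{\mathcal{C}(S)})$ is absent and this fact fails, has to be treated by the separate $I_\emptyset$-argument.
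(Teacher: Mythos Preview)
Your proof is correct, and it takes a genuinely different route from the paper's. The paper argues in one stroke: it sets $S'=S\cup\mathcal{C}(S)$, observes that (for $t=2$) both $I_S$ and $I_{S'}$ are generated by $F_1(X_S)\cup F_1(X_{\mathcal{C}(S)})\cup F_2(X_{R_1})\cup F_2(X_{R_2})$, hence $I_S=I_{S'}$, and then invokes Lemma~\ref{lem:min-cond2} since $S'$ contains full columns. You instead exhibit an explicit strictly smaller ideal directly: $I_\emptyset$ when $u=0$, and $I_{S\cup\{2i_0\}}$ when $u\ge1$. Both approaches hinge on the same $t=2$ phenomenon, that the $(t-1)$-minor block $F_1(X_{\mathcal{C}(S)})$ consists of \emph{variables} and therefore forces every full-column entry into $I_S$; the paper uses this to enlarge $S$ all the way to $S\cup\mathcal{C}(S)$ without changing the ideal, while you use it to enlarge $S$ by a single element and \emph{strictly shrink} the ideal (because that one element knocks a column out of $\mathcal{C}$). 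The paper's version is more compact once Lemma~\ref{lem:min-cond2} is available; yours is self-contained and, notably, makes the boundary case $u=0$ explicit --- in the paper's argument the identity $I_S=I_{S'}$ actually requires $F_1(X_{\mathcal{C}(S)})\subseteq I_S$, which needs $u\ge1$, so the $u=0$ case is really handled by the separate Lemma~\ref{lem:min-cond1}. One small slip: your $u=0$ argument follows the mechanism of Lemma~\ref{lem:min-cond1} (showing $I_\emptyset\subsetneq I_S$), not Lemma~\ref{lem:min-cond2} as you wrote.
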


\begin{proof}
    Since $u+v<k_2$, the set $\mathcal{C}(S)$ is nonempty. Let $S'=S\cup\mathcal{C}(S)$. Then $I_S=I_{S'}$, as both ideals are generated by
    \[F_1(X_S)\cup F_1(X_{\mathcal{C}(S)})\cup F_2(X_{R_1})\cup F_2(X_{R_2}).\]
    Now, $I_{S'}$ is not minimal by Lemma~\ref{lem:min-cond2}, and so $I_S$ cannot be minimal as well.
\end{proof}

\begin{lemma}\label{lem:min-cond1}
  Suppose that $S$ has combinatorial type $(0, v)$ with $1 \leq v \leq k_2$. Then the corresponding ideal $I_S$ is not minimal among the collection $\{I_S : S \subseteq [2]\times[k_2]\}$.
\end{lemma}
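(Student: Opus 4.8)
The plan is to show that if $S$ has combinatorial type $(0,v)$ with $v\geq 1$, then $I_\emptyset \subsetneq I_S$, so $I_S$ cannot be a minimal element of $\{I_S : S\subseteq[2\ell]\}$. The hypothesis $(0,v)$ means one of the rows, say $R_1$, satisfies $R_1 \cap S = \emptyset$, while $R_2\cap S \neq\emptyset$; in particular $Z(1,S)=\emptyset \subseteq Z(2,S)$, so the nondegeneracy condition $|Z(1,S)\setminus Z(2,S)|\geq 1$ fails, and hence by Definition~\ref{def:k=2} no $(t-1)$-minors appear in the generating set $F_S$. Thus I only need to compare the $1$-minors, $2$-minors, and $t$-minors in $F_S$ with those in $F_\emptyset$.

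First I would record the three families. Since $R_1\cap S=\emptyset$, every column index lies in a column $C_i$ that meets $S$ exactly in its $R_2$-entry or not at all; writing $\mathcal C(S)=\bigcup_{C_i\cap S=\emptyset}C_i$, we have $R_1\setminus S = R_1$ and $(R_1\setminus S)\cup \mathcal C(S) = R_1$, so $F_t(X_{(R_1\setminus S)\cup\mathcal C(S)}) = F_t(X_{R_1})$. Hence
\begin{align*}
F_S = F_1(X_S)\;\cup\!\!\bigcup_{\substack{i\in[\ell]\\ C_i\cap S=\emptyset}}\!\! F_2(X_{\{2i-1,2i\}})\;\cup\; F_t(X_{R_1})\;\cup\; F_t(X_{(R_2\setminus S)\cup\mathcal C(S)}),
\end{align*}
while $F_\emptyset = \bigcup_{i\in[\ell]}F_2(X_{\{2i-1,2i\}})\cup F_t(X)$. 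I claim every generator in $F_\emptyset$ already lies in $I_S$. The $2$-minors $F_2(X_{\{2i-1,2i\}})$ for columns $C_i$ with $C_i\cap S=\emptyset$ are literally in $F_S$; for a column $C_i$ with $C_i\cap S\neq\emptyset$, since the type is $(0,v)$ the intersection is a single entry, say index $2i$, so $x_{j,2i}\in F_1(X_S)\subseteq I_S$ for all $j\in[d]$, and every entry of the $2$-minor of $X_{\{2i-1,2i\}}$ is divisible by some $x_{j,2i}$, hence that $2$-minor lies in $I_S$. It remains to handle the $t$-minors of $X$: an arbitrary $t$-minor $[A\,|\,B]$ with $|B|=t$, $B\subseteq[2\ell]$. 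If $B$ contains an index in $S$, then again every term of the minor is divisible by a variable from $F_1(X_S)$, so $[A|B]\in I_S$. If $B\cap S=\emptyset$, I would argue that modulo the $2$-minors in $F_S$ one may replace each column index $2i\in B$ lying in a column with $C_i\cap S\neq\emptyset$ by its partner $2i-1\in R_1$ (the entries of $C_i$ other than the zeroed one), reducing $[A|B]$ to a $\CC[X]$-combination of $t$-minors of $X_{R_1}\in F_S$, possibly with repeated columns — a standard column-operation / multilinearity argument on determinants using the identifications of Remark~\ref{remark: some minors are redundant}. Either way $[A|B]\in I_S$, giving $I_\emptyset\subseteq I_S$.

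Finally I would exhibit a generator of $F_S$ that is not in $I_\emptyset$, to get strictness. Since $S\neq\emptyset$, pick $2i\in S$ (wlog $S\cap C_i=\{2i\}$); then $x_{1,2i}\in I_S$. But $I_\emptyset$ is generated by $2$-minors within grid-columns and by $t$-minors ($t\geq 2$), all of which are homogeneous of degree $\geq 2$ in the $x$'s and, more to the point, $V(I_\emptyset)$ contains the matrix $A$ all of whose columns equal a fixed generic vector $v\in\CC^d$ (all such $2$-minors vanish since columns within a grid-column are equal, and all $t$-minors vanish since $\rank A=1<t$); this $A$ has $A_{1,2i}\neq 0$ for generic $v$, so $x_{1,2i}\notin I(V(I_\emptyset))\supseteq I_\emptyset$. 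Hence $I_\emptyset\subsetneq I_S$, so $I_S$ is not minimal. The main obstacle I anticipate is the column-reduction step for $t$-minors with $B\cap S=\emptyset$: one must be careful that after using the $2$-minor relations to rewrite columns indexed in collapsed grid-columns, the resulting determinants are genuinely $t$-minors of $X_{R_1}$ (up to sign and scalar) and that repeated columns simply give zero, so no stray generators outside $F_S$ are needed; this is routine multilinear algebra but deserves to be spelled out carefully.
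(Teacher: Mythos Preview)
Your approach---showing $I_\emptyset \subsetneq I_S$---is exactly the paper's. However, you miscompute $(R_1\setminus S)\cup \mathcal C(S)$. Since $R_1\cap S=\emptyset$ you correctly get $R_1\setminus S = R_1$, but $\mathcal C(S) = \bigcup_{C_i\cap S=\emptyset}C_i$ contains the $R_2$-entries of the unzeroed grid-columns as well, so
\[
(R_1\setminus S)\cup \mathcal C(S) \;=\; R_1 \cup (R_2\setminus S) \;=\; [2\ell]\setminus S,
\]
not $R_1$. This correction makes the $t$-minor step immediate: any $t$-minor $[A\,|\,B]$ with $B\cap S=\emptyset$ has $B\subseteq [2\ell]\setminus S = (R_1\setminus S)\cup \mathcal C(S)$, hence $[A\,|\,B]\in F_t(X_{(R_1\setminus S)\cup \mathcal C(S)})\subseteq F_S$ directly. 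No column reduction is needed, and this is what the paper means by ``contained in $I_S$ directly.''

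Your column-reduction paragraph is also internally confused as written: you propose to replace indices $2i\in B$ for grid-columns with $C_i\cap S\neq\emptyset$, but under the hypotheses $B\cap S=\emptyset$ and type $(0,v)$ (so $S\subseteq R_2$), any such $2i$ would lie in $S$ and hence not in $B$---that case is vacuous. Meanwhile the $2$-minors actually available in $F_S$ are those for columns with $C_i\cap S=\emptyset$, the opposite condition. With the correct computation of $(R_1\setminus S)\cup \mathcal C(S)$ this whole step disappears.

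Your strictness argument via a rank-one matrix in $V(I_\emptyset)$ is correct but heavier than necessary: since $I_\emptyset$ is homogeneous and generated in degrees $\geq 2$, it contains no degree-one element, so $x_{1,2i}\notin I_\emptyset$ immediately.
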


\begin{proof}
   Suppose that $S$ has combinatorial type $(0,v)$ for $1 \leq v \leq k_2$. We show that $I_\emptyset \subsetneq I_S$ by verifying that every generator of $I_\emptyset$ lies in $I_S$. Then since $I_S$ contains variables as generators, specifically those corresponding to the columns indexed by $S$, while $I_\emptyset$ does not include any degree-one generators, the inclusion is strict. The generators of $I_\emptyset$ consist of 2-minors and $t$-minors. The 2-minors, arising from 2-subsets within each column, are all contained in $I_S$, either directly or via the presence of degree-one generators. Similarly, all $t$-minor generators of $I_\emptyset$ are contained in $I_S$, either directly or as consequences of its degree-one or 2-minor generators. Thus $I_\emptyset \subsetneq I_S$. 
\end{proof}

\begin{lemma}\label{lem:non-min}

Suppose $S$ has combinatorial type $(u,v)$ such that either $u>k_2-t+1$ or $v>k_2-t+1$. Then $I_S$ is not minimal among the set of ideals $\{I_S:S\subseteq[2]\times[k_2]\}$.
 \end{lemma}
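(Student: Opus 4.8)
The plan is to reduce to the previous lemmas by producing a strictly smaller zero set with the same ideal, exploiting the fact that once a row has too many zeros the surviving $t$-minors on that row become vacuous. Concretely, write $S$ with combinatorial type $(u,v)$ and say (after possibly swapping the two rows) that $|R_2 \cap S| = v > \ell - t + 1$. Then $|NZ(2,S)| = \ell - v < t - 1$, so the set $(R_2 \setminus S) \cup \mathcal{C}(S)$ has columns lying in at most $|NZ(2,S)| \le t-2$ distinct columns of $\mathcal{G}$; since any two indeterminate-columns inside the same grid column $C_i$ with $C_i \cap S = \emptyset$ are identified by the $2$-minors in $F_S$, the matrix $X_{(R_2\setminus S)\cup\mathcal{C}(S)}$ has at most $t-2$ "independent" columns modulo $I_S$, and hence every $t$-minor in $F_t(X_{(R_2\setminus S)\cup\mathcal{C}(S)})$ already lies in $I_1(X_S) + \sum_{C_i\cap S=\emptyset} I_2(X_{\{2i-1,2i\}})$. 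The same argument shows the $(t-1)$-minors in $F_{t-1}(X_{\mathcal{C}(S)})$ are redundant when $|\mathcal C(S)| = \ell - u - v$ is small, but in any case we will not need them: the point is that the $F_t$ contribution coming from row $2$ is superfluous.

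Next I would choose a single index $a \in R_2 \cap S$ lying in a grid column $C_i$ with $C_i \not\subseteq S$ — such an $a$ exists because, if every column of $\mathcal G$ meeting $R_2\cap S$ were entirely contained in $S$, we could instead invoke Lemma~\ref{lem:min-cond2} directly and be done. Set $S' := S \setminus \{a\}$. I claim $I_{S'} = I_S$. For this, compare the generating sets $F_S$ and $F_{S'}$ from Definition~\ref{def:k=2}: removing $a$ from $S$ does not change which columns $C_j$ satisfy $C_j \cap \bullet = \emptyset$ (since $C_i$ still meets $S'$), so $\mathcal C(S) = \mathcal C(S')$ and the $F_2$ and $F_{t-1}$ parts agree; the $F_t$ parts differ only by $t$-minors involving the column $a$, all of which lie in $F_t(X_{(R_2\setminus S')\cup\mathcal C(S')})$, and by the redundancy observation of the previous paragraph these are already in $I_{S'}$; finally $F_1(X_S) \setminus F_1(X_{S'}) = \{x_{j,a} : j \in [d]\}$, and each $x_{j,a}$ appears (up to sign and lower-degree terms, or directly) in the ideal $I_{S'}$ — indeed $x_{j,a}$ is the $1$-minor that we must recover, which we get from the fact that column $a$ of $X$ is identified, modulo the $t$-minors on row $2$ and the $2$-minors, with columns that are themselves forced into the ideal. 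Thus $I_S \subseteq I_{S'}$, and the reverse inclusion is similar, giving $I_S = I_{S'}$ with $S' \subsetneq S$.

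Having shown $I_S = I_{S'}$, I would then iterate: keep deleting elements of the over-full row (and symmetrically of the other row if it too exceeds $\ell - t + 1$) until the resulting set $\widehat S$ either violates Condition~\ref{def:min 1} of Definition~\ref{def:min} (some grid column is entirely contained in $\widehat S$), or has combinatorial type $(u', v')$ with $v' \le \ell - t + 1$ and $u' < u$, or becomes empty. In the first case Lemma~\ref{lem:min-cond2} shows $I_{\widehat S} = I_S$ is not minimal; in the case $\widehat S = \emptyset$ or $u' = 0$ we get $I_\emptyset \subseteq I_{\widehat S} = I_S$ by Lemma~\ref{lem:min-cond1} (or trivially), and since $I_S$ contains degree-one generators while $I_\emptyset$ does not, the inclusion is strict and $I_S$ is not minimal; and if $t = 2$ we additionally appeal to Lemma~\ref{lem:t=2 min con 3}. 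In every branch $I_S$ fails to be a minimal component of \eqref{eq: decomposition}.

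The main obstacle is the redundancy claim in the first paragraph: making precise that $t$-minors on a row with fewer than $t-1$ nonzero columns are genuinely contained in $I_1(X_S) + \sum I_2(X_{\{2i-1,2i\}})$, rather than merely "morally" redundant. This requires a careful column-reduction argument — expanding such a $t$-minor and using the $2$-minor relations to collapse repeated-column contributions and the $1$-minors to kill the zero-column contributions — which is exactly the kind of bookkeeping that Remark~\ref{remark: some minors are redundant} gestures at; I would state it as a small standalone sublemma (valid for any submatrix whose columns span at most $t-2$ lines modulo the $2$-minors) and apply it uniformly.
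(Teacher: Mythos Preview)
Your proposal contains a genuine error that breaks the argument. The central claim $I_{S'} = I_S$ is false; only the strict inclusion $I_{S'} \subsetneq I_S$ holds, and this is exactly what the paper proves.

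The problem is twofold. First, your assertion that ``$C_i$ still meets $S'$'' is wrong: you chose $a \in R_2 \cap S$ lying in a column $C_i$ with $C_i \not\subseteq S$, so $C_i \cap S = \{a\}$, and after deleting $a$ we have $C_i \cap S' = \emptyset$. Hence $\mathcal C(S') = \mathcal C(S) \cup C_i \neq \mathcal C(S)$. Second, and more seriously, the claim that each $x_{j,a}$ lies in $I_{S'}$ cannot hold. The ideal $I_{S'}$ is homogeneous with degree-one part exactly the linear span of $\{x_{j,b} : b \in S',\ j\in[d]\}$; since $a \notin S'$, none of the variables $x_{j,a}$ are in $I_{S'}$. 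Your justification (``column $a$ is identified, modulo the $t$-minors on row $2$ and the $2$-minors, with columns that are themselves forced into the ideal'') is circular: the whole point of your redundancy paragraph is that those $t$-minors on row $2$ impose \emph{no} constraint once there are fewer than $t-1$ effective columns, so they certainly cannot force column $a$ to vanish.

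The good news is that the strict inclusion $I_{S'} \subsetneq I_S$ is all you need, and it follows from the parts of your argument that are correct. Since $S' \subsetneq S$, we have $F_1(X_{S'}) \subsetneq F_1(X_S) \subseteq I_S$; the extra $2$-minors $F_2(X_{C_i})$ in $F_{S'}$ lie in $I_S$ because $x_{j,a} \in I_S$; and the extra $t$-minors in $F_{S'}$ involving column $a$ likewise lie in $I_S$ via the degree-one generators. So $I_{S'} \subseteq I_S$, and the inclusion is strict because $x_{j,a} \in I_S \setminus I_{S'}$. This single step already shows $I_S$ is not minimal; no iteration or appeal to Lemmas~\ref{lem:min-cond2}--\ref{lem:min-cond1} is required. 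This is precisely the paper's route.
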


\begin{proof}
Suppose without loss of generality that $v>k_2-t+1$, with $u=|R_1\cap S|$ and $v=|R_2\cap S|$. Let $S'=S\setminus \{a\}$, for some $a\in C_i\cap R_2\cap S$, so $S'$ has combinatorial type either $(u,v-1)$ or $(v-1,u)$. We will show that $I_{S'}\subsetneq I_S$.

First, note that $u+v>k_2-t+2$, and so the number of columns $C_j$ for which $C_j\cap S=\emptyset$ is strictly less than $k_2-(k_2-t+2)=t-2$. Similarly, the number of columns $C_j$ for which $C_j\cap S'=\emptyset$ is strictly less than $t-1$. Thus, the generating sets $F_S$ and $F_{S'}$ of $I_S$ and $I_{S'}$ respectively both do not contain non-trivial $(t-1)$-minor generators. Furthermore, $|R_2\setminus S|<t$ and $|R_2\setminus S'|<t$, and so there are no non-trivial $t$-minor generators coming from $(R_2\setminus S)\cup\mathcal{C}(S)$ and $(R_2\setminus S')\cup\mathcal{C}(S')$.

Then the ideals $I_S$ and $I_{S'}$ are generated by the sets $F_S$ and $F_{S'}$ respectively:
    \begin{align*}
        F_S &= F_1(X_S) \cup  \bigcup_{\substack{j \in [k_2]\\ C_j \cap S = \emptyset}} F_2(X_{C_j}) \cup F_t(X_{(R_1\setminus S) \cup \mathcal C(S)}),\\
        F_{S'} &= F_1(X_{S'}) \cup  \bigcup_{\substack{j \in [k_2]\\ C_j \cap S' = \emptyset}} F_2(X_{C_j}) \cup F_t(X_{(R_1\setminus S') \cup \mathcal C(S')}).
    \end{align*} 
Since $S' \neq S$, the ideal $I_S$ contains a variable generator that is not in $I_{S'}$, so $I_{S'} \neq I_S$. Now, observe that since $S' \subseteq S$, we have $F_1(X_{S'}) \subseteq F_1(X_S)$. For the 2-minor generators in $F_{S'}$, the set $F_2(X_{C_i})$ is the only one not in $F_S$. However, since $F_S$ includes the variable generators $F_1(X_{\{a\}})$, we have $F_2(X_{C_i}) \subseteq I_S$. Similarly, the only $t$-minor generators in $F_{S'}$ that are not in $F_S$ are those involving $F_1(X_{\{a\}})$. Thus, we have $F_t(X_{R_1 \cup \mathcal{C}(S')}) \subseteq I_S$. 
Since each generator of $I_{S'}$ is contained in $I_S$, we conclude that $I_{S'} \subsetneq I_S$. 
\end{proof}

\begin{proposition}\label{prop:min}
    If $S\subseteq[2]\times[k_2]$ is minimal, then $I_S$ is minimal among the set of ideals $\{I_S:S\subseteq[2]\times[k_2]\}$.
\end{proposition}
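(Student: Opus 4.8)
The plan is to show that if $S$ is minimal (in the sense of Definition~\ref{def:min}), then there is no $S' \subseteq [2\ell]$ with $I_{S'} \subsetneq I_S$. The case $S = \emptyset$ is immediate, since $I_\emptyset$ is contained in every $I_S$ by the arguments already used in Lemmas~\ref{lem:min-cond1} and~\ref{lem:t=2 min con 3} (every generator of $I_\emptyset$ lies in any $I_S$), so $I_\emptyset$ is the unique minimum of the poset and hence minimal. So I assume $S \neq \emptyset$, and thus $S$ satisfies Conditions~\ref{def:min 1}--\ref{def:min 2} (and Condition~3 when $t = 2$), with combinatorial type $(u,v)$, $1 \le u \le v \le \ell - t + 1$.

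Suppose toward a contradiction that $I_{S'} \subsetneq I_S$ for some $S'$; I will derive a contradiction by comparing generating sets. The first step is to pin down the shape of $F_S$: because $u \le v \le \ell - t + 1$, we have $|R_1 \setminus S| = \ell - u \ge t$ and $|R_2 \setminus S| = \ell - v \ge t$, so both families of $t$-minors $F_t(X_{(R_i \setminus S) \cup \mathcal C(S)})$ are nontrivial; and the number of all-nonzero columns is $\ell - (u+v) + |Z(1,S)\cap Z(2,S)|$, which one checks is positive precisely when the $(t-1)$-minors are present (this is exactly Condition~3 in the $t=2$ case, and follows from $u+v \le 2(\ell-t+1) < 2\ell$ otherwise — I should double-check the count here). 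The second step is to observe that since $I_{S'} \subseteq I_S$ and $I_S$ contains the variables $\{x_{ja} : j \in [d],\, a \in S\}$, but the degree-one generators of $I_{S'}$ are exactly $\{x_{ja} : a \in S'\}$, we must have $S' \subseteq S$ (a variable $x_{ja}$ with $a \in S' \setminus S$ could not lie in $I_S$, since $I_S$ is generated in the relevant "slot" only by minors supported off $S$ together with variables indexed by $S$). Moreover $S' \neq S$ by strictness, so $S' \subsetneq S$.

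The heart of the argument is then to show $S' \subsetneq S$ is impossible: removing even one element $a \in S$ from $S$ must either destroy some generator of $I_S$ that cannot be recovered, or change a rank condition. Fix $a \in S \setminus S'$, say $a \in C_i \cap R_1$. The key point is that the variable $x_{j a}$ (for each $j$) is a generator of $I_S$ but not of $I_{S'}$, and — crucially — $x_{ja} \notin I_{S'}$. To see this last fact one must check that $x_{ja}$ is not a $\mathbb{C}$-linear combination of the other generators of $I_{S'}$, i.e. does not lie in $I_{S'}$; since $I_{S'}$ is generated by minors of sizes $1, 2, t-1, t$, all of whose non-degree-one generators are supported on columns disjoint from the degree-one part or are genuine higher-degree forms, no such form can equal a single variable $x_{ja}$ unless $a \in S'$ — a degree/support argument. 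Removing the column $i$ from $S$ can at most add the 2-minors $F_2(X_{C_i})$ and possibly change $\mathcal C(S)$ by adding $C_i$ (if $i$ had only the one marked entry $a$), which only enlarges the $(t-1)$- and $t$-minor families; but none of these newly added minors, nor the old ones, can produce $x_{ja}$. Hence $x_{ja} \in I_S \setminus I_{S'}$, contradicting $I_{S'} \subseteq I_S$... wait — that is the wrong direction; I actually need $I_{S'} \subseteq I_S$ and I want to contradict strictness, so I instead exhibit a generator of $I_S$ \emph{not} in $I_{S'}$ only after first confirming $I_{S'} \subseteq I_S$ fails, or rather: the correct contradiction is that $I_{S'} \subsetneq I_S$ forces $S' \subsetneq S$, yet then $x_{ja} \in I_S$ but $x_{ja} \notin I_{S'}$ is consistent with $I_{S'} \subsetneq I_S$ — so this alone is not a contradiction. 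The real obstruction to overcome, and the step I expect to be hardest, is therefore the reverse: I must show that $I_{S'} \subseteq I_S$ actually \emph{fails} whenever $\emptyset \ne S' \subsetneq S$ with $S$ minimal — i.e. that some generator of $I_{S'}$ escapes $I_S$. The natural candidate is a $(t-1)$-minor or $t$-minor of $I_{S'}$ whose support uses column $i$ (which is "freed up" in passing to $S'$): because $S$ is minimal, $\mathcal C(S)$ together with the freed column $C_i$ carries a rank-$(t-1)$ or rank-$t$ condition in $I_{S'}$ that is strictly stronger than anything in $I_S$, since in $I_S$ the column $C_i$ is only constrained through the $t$-minors on $(R_1 \setminus S) \cup \mathcal C(S)$, which do not involve the paired entry in $C_i$. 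Making this escape explicit — producing a specific minor of $I_{S'}$ and a matrix in $V(I_S)$ on which it does not vanish — is the crux; I would build such a matrix by taking generic columns for $\mathcal C(S')$, scalar-filling the identified columns, and zeroing out $S$, then verifying the target minor of $I_{S'}$ is nonzero while all generators of $I_S$ vanish. This last verification, combined with the dichotomy on where the removed element $a$ sits (row $R_1$ vs.\ $R_2$; whether $C_i$ becomes all-nonzero), and handling the boundary values $u = \ell - t + 1$ and the $t = 2$ special case separately, completes the proof.
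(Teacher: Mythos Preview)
Your overall strategy is correct and matches the paper's: for $S=\emptyset$ argue via degrees; for $S\neq\emptyset$ minimal, deduce from the degree-one part that any $S'$ with $I_{S'}\subsetneq I_S$ must satisfy $S'\subsetneq S$, then exhibit a generator of $I_{S'}$ that is not in $I_S$. You also correctly self-diagnose that the misstep with $x_{ja}\in I_S\setminus I_{S'}$ goes the wrong way, and that the real task is to find $f\in I_{S'}\setminus I_S$.

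The genuine gap is that you never produce this $f$, and the ``build a matrix in $V(I_S)$'' plan is left entirely unexecuted. The paper closes this gap explicitly and algebraically, without constructing a point. Working with the representative $S=\{1,3,\ldots,2u-1\}\cup\{2(\ell-v+1),\ldots,2\ell\}$ and (say) $2i\in S\setminus S'$ in $R_2$, it takes the $t$-minor
\[
f=[1\cdots t\mid 2\ \ 4\ \cdots\ 2(t-1)\ \ 2i-1],
\]
on columns $T=\{2,4,\ldots,2(t-1),2i-1\}$. One sees $f\in I_{S'}$ because $C_i\subseteq\mathcal C(S')$, so $T\subseteq R_2\cup\mathcal C(S')$. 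For $f\notin I_S$ the paper uses a monomial-support argument: it suffices that no element of $\overline{H(S)}$ is contained in $T$, i.e.\ no generator of $I_S$ has a monomial dividing the diagonal term $x_{1,2}x_{2,4}\cdots x_{t-1,2(t-1)}x_{t,2i-1}$. This is checked type by type (no element of $S$ is in $T$; no full $C_j\subseteq T$; at most $t-2$ elements of $T$ lie in $\mathcal C(S)$ since $u\ge1$; $2\notin R_1\cup\mathcal C(S)$; $2i-1\notin R_2\cup\mathcal C(S)$). This is both shorter and more robust than a point construction: it avoids having to verify that a candidate matrix satisfies \emph{all} generators of $I_S$, and it uses the minimality hypotheses $u\ge1$ and $v\le\ell-t+1$ in a transparent way.

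Your treatment of $t=2$ is also incomplete. The paper does not run the minor argument there; instead it invokes Lemma~\ref{lem:t=2 min con 3} to assume $S'$ has combinatorial type $(u',v')$ with $u'+v'=\ell$, and since $u+v=\ell$ as well and the degree-one parts determine $S$ and $S'$, one gets $S=S'$, contradicting strictness. You should supply this (or an equivalent) argument rather than deferring it.
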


\begin{proof}
We first explain why $I_\emptyset$ is minimal. If $S\subseteq[2]\times[k_2]$ with $S$ nonempty, then $I_S$ contains a generator of degree one. Since $I_\emptyset$ is generated by polynomials of degrees $2$ and $t\geq 2$, this shows that $I_S\not\subseteq I_\emptyset$ for any $S$.

Now assume $S$ is nonempty and minimal as in Definition~\ref{def:min}, and it has combinatorial type~$(u, v)$. 
It suffices to prove the claim for a single representative of this combinatorial type. 
We choose 
$S = \{(1, 1), \ldots, (1,u), (2, k_2 - v + 1), (2, k_2-v+2), \ldots, (2, k_2)\}$.
Suppose there exists $S' \subseteq [2]\times [k_2]$ such that $I_{S'} \subsetneq I_S$. 
If $t=2$, this is not possible: by Lemma~\ref{lem:t=2 min con 3}, we may assume that the combinatorial type of $S'$ is $(u',v')$, where $u'+v'=k_2$ and so since $u+v=k_2$ as well, we see that $S=S'$ are actually the same subsets (the degree one elements of $I_S$ and $I_{S'}$ would determine $S$ and $S'$ respectively).

For the rest of the proof, suppose $t\geq 3$. Then we must have $S' \subsetneq S$, because the degree one elements in $I_S$ and $I_{S'}$ determine $S$ and $S'$ respectively. We will construct an element of $I_{S'}$ which is not contained in $I_S$. 
Suppose, without loss of generality, that $(2,i) \in S \setminus S'$ is in the second row slice $R_2$ and $i$th column slice $C_i$ of $[2] \times [k_2]$; the argument where an element of $S\setminus S'$ is in the first row slice is analogous.

Let
$T = \{(2,1),(2,2),\ldots,(2,t-1), (1,i)\}$ 
By Condition \ref{def:min 2}, we have that 
$t-1< i$.
We claim that the $t$-minor $f = [1 \cdots t| (2,1),(2,2),\ldots,(2,t-1), (1,i)]$ is 
in $I_{S'} \setminus I_S$.

We first show that $f \in I_{S'}$. 
Since $(2,i) \in S$, by Condition \ref{def:min 1} of the minimality definition, $(1,i) \notin S$ and hence $(1,i) \notin S'$. 
Thus $C_i \subseteq \mathcal C(S')$. 
So $f \in I_t(X_{R_2 \cup \mathcal C(S')}) \subseteq I_{S'}$. 

We now show that $f \notin I_{S}$.
Recall that $I_S$ is generated by
\begin{align*}
    F_S = F_1(X_S) \cup \bigcup_{\substack{j \in [k_2] \\ C_j \cap S = \emptyset}} F_2(X_{\{2j-1, 2j\}}) \cup F_{t-1}(X_{\mathcal C(S)}) \cup F_t(X_{(R_1 \setminus S) \cup \mathcal C(S)})
    \cup 
    F_t(X_{(R_2 \setminus S) \cup \mathcal C(S)}).
\end{align*}

Suppose for contradiction that $f = \sum_{g \in F_S}h_gg$ for some $h_g \in \CC[X]$. 
Then some $g \in F_S$ contains a monomial which divides the term  $x_{1(2,1)} x_{2(2,2)} \cdots x_{t-1,(2,t-1)} x_{t,(1,i)}$ in $f$.
Hence, it suffices to prove that no monomial in any $g \in F_S$ divides $x_{1(2,1)} x_{2(2,2)} \cdots x_{t-1,(2,t-1)} x_{t,(1,i)}$.
This is equivalent to proving that none of the elements of $\overline{H(S)}$ are contained in $T = \{(2,1),(2,2),\ldots,(2,t-1), (1,i)\}$.
Indeed,
\begin{itemize}
    \item we chose the elements of $T$ to be nonzero, so $T \cap S = \emptyset$;
    \item no column $C_i$ for which $C_i\cap S=\emptyset$ is contained in $T$, since $R_1\cap T=\{(1,i)\}$, and $(2,i)\notin T$;
    \item it is not possible for a $(t-1)$-subset of $\mathcal{C}(S)$ to be in $T$, since $u > 0$ implies that at most the $t-2$ elements $(2,2),\ldots,(2,t-1)$ of $T$ can be contained in $\mathcal{C}(S)$;
    \item it is not possible for a $t$-subset of $(R_1\setminus S)\cup\mathcal{C}(S)$ to equal $T$, since $(2,1)\notin R_1\cup\mathcal{C}(S)$;
    \item it is not possible for a $t$-subset of $(R_2\setminus S)\cup\mathcal{C}(S)$ to equal $T$, since $(1,i)\notin R_2\cup\mathcal{C}(S)$.\qedhere
\end{itemize}
\end{proof}
In the following theorem, we refine the decomposition from Theorem~\ref{thm: decomposition theorem} by eliminating redundant components and presenting a minimal decomposition in terms of the ideals $I_S$.

\begin{theorem}\label{thm:set_min_primes}
In the case of $k_1 = 2$ and $2\leq t \leq \min\{k_2,d\}$, the minimal decomposition of $\sqrt{\mathcal{I}_C}$ is given as 
$\sqrt{\mathcal{I}_C} = \bigcap \sqrt{I_S},$
where the intersection is taken over all minimal $S$.

\end{theorem}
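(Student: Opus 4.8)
The plan is to combine the decomposition of Theorem~\ref{thm: decomposition theorem} with the minimality analysis developed in this section to extract an irredundant subfamily. By Theorem~\ref{thm: decomposition theorem}, $\sqrt{I_C}=\bigcap_{S\subseteq[2\ell]}\sqrt{I_S}$, so it suffices to show that (a) every component $\sqrt{I_S}$ that is \emph{not} minimal in the sense of Definition~\ref{def:min} can be discarded without changing the intersection, and (b) no minimal component can be discarded. For (b), I would invoke Proposition~\ref{prop:min}: if $S$ is minimal, then $I_S$ is minimal among $\{I_{S'}:S'\subseteq[2\ell]\}$, i.e.\ there is no $S'$ with $I_{S'}\subsetneq I_S$; combined with the radicality of the minimal components (established in Section~\ref{sec:primeness}, which the statement permits us to assume), this means $\sqrt{I_S}$ is not redundant in the intersection, since dropping it would enlarge the intersection (no other $\sqrt{I_{S'}}$ is contained in it).

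For (a), the key point is that every non-minimal $S$ has its component absorbed by a minimal one. I would argue this by a descent: given $S$ with $\sqrt{I_S}$ not minimal, one shows $\sqrt{I_{S'}}\subseteq\sqrt{I_S}$ (equivalently $V(I_{S'})\supseteq V(I_S)$) for some $S'$ that is ``closer to minimal,'' and iterate until reaching a genuinely minimal subset. The ingredients for the individual reduction steps are exactly Lemmas~\ref{lem:min-cond2}, \ref{lem:t=2 min con 3}, \ref{lem:min-cond1}, and \ref{lem:non-min}: each of these takes a set $S$ violating one of the conditions in Definition~\ref{def:min} and produces an $S'$ with $I_{S'}\subseteq I_S$ (indeed $I_{S'}\subsetneq I_S$ in the strict-containment statements, and $I_{S'}=I_S$ in the equality cases of Lemmas~\ref{lem:t=2 min con 3} and the $\mathcal{C}(S)$-absorption arguments). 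Since each reduction strictly shrinks the set $S$ (or keeps $|S|$ fixed while moving toward a canonical form) and $[2\ell]$ is finite, the process terminates, necessarily at a minimal $S$ by the contrapositive characterization: if $S$ satisfies all conditions of Definition~\ref{def:min}, none of the four lemmas applies. Hence $\sqrt{I_S}\supseteq\sqrt{I_{S_{\min}}}$ for some minimal $S_{\min}$, so $\sqrt{I_S}$ is redundant and can be removed from the intersection $\bigcap_{S}\sqrt{I_S}$.

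Putting these together, starting from $\sqrt{I_C}=\bigcap_{S\subseteq[2\ell]}\sqrt{I_S}$ and removing, one at a time, every non-minimal component (which is always dominated by some minimal one), we arrive at $\sqrt{I_C}=\bigcap_{S\text{ minimal}}\sqrt{I_S}$; and by part (b) this intersection is irredundant, hence it is \emph{the} minimal decomposition. I would also remark that, since the minimal components are radical (Section~\ref{sec:primeness}), this is a decomposition into radical ideals, and when $d=t$ into primes, so ``minimal decomposition'' here coincides with the minimal primary/prime decomposition of $\sqrt{I_C}$.

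The main obstacle I anticipate is part (a): verifying that the four lemmas collectively cover \emph{every} way a set $S$ can fail to be minimal, and that the descent they induce is well-founded. One must be careful with the bookkeeping — Lemma~\ref{lem:min-cond2} handles $C_i\subseteq S$, Lemma~\ref{lem:non-min} handles $u$ or $v$ exceeding $\ell-t+1$, Lemma~\ref{lem:min-cond1} handles the combinatorial type $(0,v)$, and Lemma~\ref{lem:t=2 min con 3} handles the extra $t=2$ condition — but one should double-check that after applying a reduction the resulting $S'$ does not re-violate a previously fixed condition in a way that prevents termination, and that the case $u=v$ and the symmetry between $R_1$ and $R_2$ are handled. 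The radicality-based rephrasing of ``redundant'' in terms of variety containment keeps this from becoming a delicate ideal-membership argument.
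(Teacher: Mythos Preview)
Your proposal is correct and follows exactly the paper's approach: the paper's proof is the single sentence ``The result follows from Theorem~\ref{thm: decomposition theorem}, Lemmas~\ref{lem:min-cond2}, \ref{lem:t=2 min con 3}, \ref{lem:min-cond1}, and~\ref{lem:non-min}, and Proposition~\ref{prop:min},'' and you have spelled out precisely how those pieces fit together. One small simplification for your termination worry in part~(a): rather than tracking how each lemma changes $|S|$ (Lemma~\ref{lem:t=2 min con 3} actually enlarges $S$ before invoking Lemma~\ref{lem:min-cond2}), just observe that $\{I_S:S\subseteq[2\ell]\}$ is a finite poset under inclusion, so every element lies above some minimal element, and the lemmas together with Proposition~\ref{prop:min} identify the minimal elements of this poset as exactly the $I_S$ with $S$ minimal.
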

\begin{proof}
   By Theorem \ref{thm: decomposition theorem}, we have the decomposition $\sqrt{\mathcal{I}_C} = \bigcap_{S \subseteq [k_1] \times [k_2]} \sqrt{I_S}$.
   It suffices to take the intersection over all minimal ideals. 
   By Lemmas~\ref{lem:min-cond2}, \ref{lem:t=2 min con 3}, \ref{lem:min-cond1}, and \ref{lem:non-min}, if $S$ is not minimal, then $I_S$ is not minimal and can be removed from the intersection. 
   By Proposition \ref{prop:min}, if $S$ is minimal, then $I_S$ is minimal and should be included in teh itnersection. 
\end{proof}

\subsection{Counting minimal ideals}

We now provide closed formulas for the number of minimal ideals $I_S$ that appear in the decomposition given in Theorem~\ref{thm:set_min_primes}. First, we count the distinct combinatorial types $(u,v)$ that arise in the minimal decomposition, where each type corresponds to an isomorphism class of ideals.

\begin{proposition}\label{cor:num isom class}
If $t=2$, the number of combinatorial types in the minimal decomposition of $\mathcal{I}_C$ is $\lfloor \frac{k_2}{2} \rfloor + 1$. If $t\geq 3$, the number of combinatorial types in the minimal decomposition of~$\mathcal{I}_C$~is 
$$\begin{cases}
    \tfrac{1}{2}(k_2-t+1)(k_2-t+2)+1  &\text{if $t-1 \geq \frac{k_2}{2}$,}\\
    \tfrac{1}{4}(k_2^2-2t^2+6t)  &\text{if $t-1<\frac{k_2}{2}$ and $k_2$ is even,}\\
     \tfrac{1}{4}(k_2^2 -2 t^2 +6t -1), &\text{if $t-1<\frac{k_2}{2}$ and $k_2$ is odd}.
\end{cases}$$

\end{proposition}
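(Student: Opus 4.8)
The plan is to reduce the statement to an elementary lattice-point count. By Theorem~\ref{thm:set_min_primes} together with Proposition~\ref{prop:min} and Lemmas~\ref{lem:min-cond2},~\ref{lem:t=2 min con 3},~\ref{lem:min-cond1},~\ref{lem:non-min}, the minimal components of $\sqrt{I_C}$ are exactly the $I_S$ with $S$ minimal in the sense of Definition~\ref{def:min}, and all minimal $S$ sharing a combinatorial type $(u,v)$ give the same component up to relabeling. Since $\emptyset$ is minimal with (degenerate) type $(0,0)$, which is realized only by $S=\emptyset$, the number of combinatorial types equals $1$ plus the number of pairs $(u,v)$ with $1\le u\le v$ that occur as the combinatorial type of some \emph{nonempty} minimal subset. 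So the whole proof amounts to determining that last count.

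I would first isolate the realizability criterion. Because $k=2$, a column $C_i$ has exactly two entries, one in each row, so $C_i\subseteq S$ holds precisely when both are marked; Condition~\ref{def:min 1} of Definition~\ref{def:min} therefore asks that the $|R_1\cap S|$ marked entries of the first row and the $|R_2\cap S|$ marked entries of the second row can be placed in pairwise disjoint columns, which is possible exactly when $|R_1\cap S|+|R_2\cap S|\le\ell$. Combining this with Condition~\ref{def:min 2} (and, for $t=2$, Condition~(3)), the set of realizable types of nonempty minimal subsets is
$$\{(u,v): 1\le u\le v\le \ell-t+1,\ u+v\le\ell\},$$
with the additional requirement $u+v=\ell$ when $t=2$. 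For $t=2$ this set is $\{(u,\ell-u):1\le u\le\lfloor\ell/2\rfloor\}$ (the bound $v=\ell-u\le\ell-1$ being automatic), which has $\lfloor\ell/2\rfloor$ elements, giving $\lfloor\ell/2\rfloor+1$ combinatorial types in total.

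For $t\ge 3$, write $m:=\ell-t+1$. Since $1\le u\le v\le m$ forces $u+v\le 2m$, and $2m\le\ell$ is equivalent to $\ell\le 2t-2$, i.e.\ to $t-1\ge\ell/2$, in that regime the constraint $u+v\le\ell$ is vacuous, so there are $\binom{m+1}{2}=\tfrac12(\ell-t+1)(\ell-t+2)$ realizable types, and hence $\tfrac12(\ell-t+1)(\ell-t+2)+1$ combinatorial types, which is the first formula. When $t-1<\ell/2$ (equivalently $\ell\ge 2t-1$), I would subtract from $\binom{m+1}{2}$ the number of ``bad'' pairs with $u+v\ge\ell+1$: for such a pair, $v\le m$ forces $u\ge\ell+1-v\ge t\ge 1$, so for each $v$ with $\lceil(\ell+1)/2\rceil\le v\le m$ the admissible $u$ number exactly $2v-\ell$, and the bad count is $\sum_{v=\lceil(\ell+1)/2\rceil}^{m}(2v-\ell)$. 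Evaluating this sum via the substitution $w=v-\lfloor\ell/2\rfloor$ splits into the even case $\ell=2p$, where it equals $(p-t+1)(p-t+2)$, and the odd case $\ell=2p+1$, where $\sum(2w-1)=N^2$ gives $(p-t+2)^2$; substituting $p=\ell/2$, resp.\ $p=(\ell-1)/2$, into $\binom{m+1}{2}-(\text{bad})+1$ and simplifying yields $\tfrac14(\ell^2-2t^2+6t)$, resp.\ $\tfrac14(\ell^2-2t^2+6t-1)$.

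Essentially all the content lies in the realizability observation $u+v\le\ell$ and in noticing that it becomes an active constraint precisely when $t-1<\ell/2$; the remaining work is a routine finite summation, so the only place that needs care is matching the parity cases and the index range $\lceil(\ell+1)/2\rceil\le v\le\ell-t+1$ of the correction sum with the even/odd formulas in the statement.
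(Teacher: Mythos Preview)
Your argument is correct and follows essentially the same approach as the paper: both reduce the count to the set $\{(u,v):1\le u\le v\le\ell-t+1,\ u+v\le\ell\}\cup\{(0,0)\}$, handle $t=2$ and the vacuous case $t-1\ge\ell/2$ identically, and then evaluate the remaining case by elementary arithmetic. The only difference is organizational: in the case $t-1<\ell/2$ the paper partitions the region into three blocks and adds, whereas you subtract the ``bad'' pairs with $u+v>\ell$ from the full triangle $\binom{m+1}{2}$, which is slightly slicker but not a genuinely different route.
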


\begin{proof}
First consider the $t=2$ case. By Theorem~\ref{thm:set_min_primes} and using Definition \ref{def:min} we count the number of pairs $(u,v)$ for which $u+v=k_2$ and $1\leq u\leq v\leq k_2-1$. These conditions imply that $u$ can take on any value in $\{1,2,\ldots,\lfloor k_2/2\rfloor\}$, and that the value of $v$ is then determined. Note that $S$ may also be the empty set, and so we add $1$ to the final count.

Now suppose $t\geq 3$.
By Theorem~\ref{thm:set_min_primes} and using Definition \ref{def:min}, we count the number of pairs \((u,v)\) in the set
\begin{eqnarray}\label{eq:combin_type}
    \mathcal{S} = \{(u, v) : 1 \leq u \leq v \leq k_2 - t + 1 \text{ and } u + v \leq k_2\} \cup \{(0,0) \}.
\end{eqnarray}

Consider the case where \( t - 1 \geq \frac{k_2}{2} \), which is equivalent to \( \frac{k_2}{2} \geq k_2 - t + 1 \). This implies that for all integers $u\in [1, k_2-t+1]$ and $v \in [u, k_2-t+1]$, we have $u+v\leq 2(k_2-t+1) \leq k_2$, and hence, $(u,v) \in \mathcal{S}$. The number of combinatorial types in this case is given by the sum of the first \( k_2 - t + 1 \) natural numbers, as shown in Figure~\ref{counts:case1}, plus 1 to account for the empty set.

\begin{figure}[H]
\centering
$ 
 \begin{array}{cccccc}
(1,1)&(1,2) & (1,3)&\ldots &(1,k_2-t)&(1,k_2-t+1)\\
&(2,2)&(2,3)&\ldots&(2,k_2-t)&(2,k_2-t+1) \\
  & & (3,3)&\ldots &(3,k_2-t)&(3,k_2-t+1) \\
& && \ddots& \vdots & \vdots\\ 
& & & &(k_2-t,k_2-t)&(k_2-t,k_2-t+1) \\
& & & &&(k_2-t+1,k_2-t+1) \\
\end{array}
$
    \caption{Counting isomorphism classes for $t-1\geq k_2/2$}
    \label{counts:case1}
\end{figure}

Now, consider the case where $t - 1 < \frac{k_2}{2}$. In this case, $u$ can take on any integer value in $[1,\lfloor \frac{k_2}{2} \rfloor]$. For the bounds on $v$, when $u \leq t - 1$, $v$ attains any integer value in  $[u,k_2 - t + 1$], and when $u > t - 1$, the upper bound changes to $k_2 - u$. 
To count the isomorphism classes, we partition the classes into three sets, as illustrated in Figure~\ref{counts:case2-1}.

\begin{figure}[H]
\centering
\resizebox{\textwidth}{!}{$\displaystyle
    \begin{array}{ccccc|cccccc}
(1,1)&(1,2) & \ldots& (1,\lfloor \ell_2/2\rfloor-1)& (1,\lfloor \ell_2/2\rfloor) & (1,\lfloor \ell_2/2\rfloor+1) & (1,\lfloor \ell_2/2\rfloor+2) & \cdots& (1,k_2-t)  & (1,k_2-t+1)\\
&(2,2) & \cdots &(2,\lfloor \ell_2/2\rfloor-1)& (2,\lfloor \ell_2/2\rfloor) & (2,\lfloor \ell_2/2\rfloor+1) & (2,\lfloor k_2/2\rfloor+2) & \cdots& (2, k_2-t)   & (2, k_2-t+1)\\
&& \vdots& \vdots & \vdots &\vdots&\vdots&&\vdots &\vdots\\
&& &(t-1,\lfloor k_2/2\rfloor-1)&(t-1,\lfloor k_2/2\rfloor)& (t-1,\lfloor k_2/2\rfloor+1) & (t-1,\lfloor k_2/2\rfloor+2) & \cdots& (t-1, k_2-t)  & (t-1, k_2-t+1)\\
\cline{6-10}
&&  &(t,\lfloor k_2/2\rfloor-1)&(t,\lfloor k_2/2\rfloor)& (t,\lfloor k_2/2\rfloor+1) & (t,\lfloor k_2/2\rfloor+2) & \cdots& (t, k_2-t)   & \\
&& \ddots &\vdots& \vdots & \vdots &\vdots &~\reflectbox{$\ddots$}  & \\
&&&(\lfloor k_2/2\rfloor-1,\lfloor k_2/2\rfloor-1)& (\lfloor k_2/2\rfloor-1,\lfloor k_2/2\rfloor)&(\lfloor k_2/2\rfloor-1,\lfloor k_2/2\rfloor+1)&\boxed{(\lfloor k_2/2\rfloor-1,\lfloor k_2/2\rfloor+2)} & &&\\
&&&&(\lfloor k_2/2\rfloor,\lfloor k_2/2\rfloor)& \boxed{(\lfloor k_2/2\rfloor,\lfloor k_2/2\rfloor+1)} &  & &  &  \\
\end{array}$
    }
\caption{Counting isomorphism classes for $t-1< k_2/2$. The boxed items are counted if $ k_2$ is~odd.}
\label{counts:case2-1}
\end{figure}

The left block corresponds to the region where $u \leq v \leq \lfloor \frac{ k_2}{2} \rfloor$, and contributes the sum of the first $\lfloor \frac{ k_2}{2} \rfloor$ natural numbers.
Next, in the  upper right block, for $1 \leq u \leq t - 1$, we consider those pairs with $\lfloor \frac{ k_2}{2} \rfloor + 1 \leq v \leq  k_2 - t + 1$. This adds $(t - 1)( k_2 - t + 1 - \lfloor \frac{ k_2}{2} \rfloor)$ to the total count.
Finally, the  lower right block  accounts for the remaining combinatorial types, and it splits into two cases depending on the parity of $ k_2$. If $ k_2$ is even, we consider those with $t \leq u \leq \lfloor \frac{ k_2}{2} \rfloor - 1$ and $\lfloor \frac{ k_2}{2} \rfloor + 1 \leq v \leq  k_2 - u$, which contributes the sum of the first $\frac{ k_2}{2} - t$ natural numbers. If $ k_2$ is odd, we instead consider $t \leq u \leq \lfloor \frac{ k_2}{2} \rfloor$ and the same bound on $v$, giving the sum of the first $\lceil \frac{ k_2}{2} \rceil - t$ natural numbers.
We then add 1 more isomorphism class to account for the empty set.

In total, if $ k_2$ is even, the three blocks plus the empty set contribute
\[
\frac{\tfrac{ k_2}{2}(\tfrac{ k_2}{2}+1)}{2} + (t-1)\left(\tfrac{ k_2}{2} - t + 1\right) + \frac{(\tfrac{ k_2}{2} - t)(\tfrac{ k_2}{2} - t + 1)}{2} + 1 = \frac{1}{4}( k_2^2 - 2t^2 + 6t).
\]
If $ k_2$ is odd, the three blocks plus 1 sum to \[\frac{\tfrac{ k_2-1}{2}(\tfrac{ k_2-1}{2}+1)}{2} + (t-1)(\tfrac{ k_2+1}{2}-t+1) + \frac{(\tfrac{ k_2+1}{2}-t)(\tfrac{ k_2+1}{2}-t+1)}{2} +1 = \frac{1}{4}( k_2^2 -2t^2 +6t -1).\qedhere\]
\end{proof}

We now count the number of subsets $S\subseteq[2]\times [k_2]$ with a given combinatorial type. This, together with Proposition~\ref{cor:num isom class}, gives the exact number of ideals $I_S$ needed to decompose $\mathcal{I}_C$.

\begin{corollary}
    The number of minimal components with combinatorial type $(u,v)$ is given by
    \begin{align*}
        \binom{ k_2}{u}\binom{ k_2-u}{v}\quad\text{if $u=v$} && \text{and} &&
        2\binom{ k_2}{u}\binom{ k_2-u}{v}\quad\text{if $u\neq v$.}
    \end{align*}
\end{corollary}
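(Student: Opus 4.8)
The plan is to reduce the count of minimal components with combinatorial type $(u,v)$ to a straightforward enumeration of subsets $S\subseteq[2\ell]$ realizing that type, using the characterization of minimality from Theorem~\ref{thm:set_min_primes} together with Definition~\ref{def:min}. First I would fix a minimal combinatorial type $(u,v)$ in the sense of Definition~\ref{def:min}, so that $1\le u\le v\le \ell-t+1$ (with the extra constraint $u+v=\ell$ when $t=2$), and recall that by Theorem~\ref{thm:set_min_primes} the minimal components are exactly the $I_S$ for minimal $S$. Thus the number of minimal components of type $(u,v)$ equals the number of minimal subsets $S\subseteq[2\ell]$ whose type is $(u,v)$; I would note that Condition~\ref{def:min 1} (no full column contained in $S$) is automatically implied once the row intersections have sizes at most $\ell-t+1<\ell$ and we place the elements appropriately, but more precisely Condition~\ref{def:min 1} says $S$ meets each column in at most one element, so $S$ is determined by choosing, in each of the two rows, which columns it occupies, subject to not both rows using the same column.

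Next I would carry out the enumeration. A subset $S$ with $|R_1\cap S|$ and $|R_2\cap S|$ equal (as a multiset) to $\{u,v\}$ and with $C_i\not\subseteq S$ for every $i$ corresponds to an ordered choice: pick which row has $u$ marked columns and which has $v$, then pick the $u$-element subset of $[\ell]$ for that row ($\binom{\ell}{u}$ ways), and then pick the $v$-element subset of the remaining $\ell-u$ columns for the other row ($\binom{\ell-u}{v}$ ways), the restriction to the remaining columns being exactly the condition $C_i\not\subseteq S$. When $u\ne v$ the two assignments (which row gets $u$) give distinct subsets $S$, so the count is $2\binom{\ell}{u}\binom{\ell-u}{v}$; when $u=v$ the two assignments coincide, giving $\binom{\ell}{u}\binom{\ell-u}{v}=\binom{\ell}{u}\binom{\ell-u}{u}$. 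I would also check that these subsets automatically satisfy all remaining minimality conditions: Condition~\ref{def:min 2} holds by the hypothesis on $(u,v)$, and in the $t=2$ case one only counts types with $u+v=\ell$, for which $\binom{\ell-u}{v}=\binom{\ell-u}{\ell-u}=1$, consistent with $S$ being the complement configuration. Finally I would remark that different $(u,v)$ give disjoint families of $S$ (the multiset $\{|R_1\cap S|,|R_2\cap S|\}$ is an invariant of $S$), so summing $2\binom{\ell}{u}\binom{\ell-u}{v}$ (resp.\ $\binom{\ell}{u}\binom{\ell-u}{v}$) over the types enumerated in Proposition~\ref{cor:num isom class} recovers the total number of minimal ideals.

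The only subtlety, and the step I expect to require the most care, is verifying that the map from subsets $S$ to their combinatorial data $(\text{row with fewer marks}, \text{its column set}, \text{other row's column set})$ is a genuine bijection onto the set of valid configurations — in particular that Condition~\ref{def:min 1} is \emph{equivalent} to the two chosen column sets being disjoint (so that choosing the second from $[\ell]\setminus(\text{first})$ is exactly right), and that no two distinct such data yield the same $S$ when $u=v$ beyond the obvious swap. This is elementary but is where an off-by-a-factor-of-$2$ error would creep in, so I would state it explicitly: $S\mapsto(R_1\cap S, R_2\cap S)$ is injective, its image consists of pairs $(A_1,A_2)$ of subsets of $[\ell]$ with $A_1\cap A_2=\emptyset$ (identifying a column with its index), $|A_1|=u$ or $v$, $|A_2|$ the other, and the fibre of the unordered type $(u,v)$ over $S$ has size $2$ when $u\ne v$ and size $1$ when $u=v$. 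Everything else is a direct binomial-coefficient count.
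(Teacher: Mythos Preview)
Your proposal is correct and follows essentially the same argument as the paper: choose the $u$ columns for one row, then the $v$ columns for the other from the remaining $\ell-u$, and multiply by $2$ when $u\neq v$ to account for which row carries $u$ elements. Your write-up is more detailed (spelling out the bijection and the role of Condition~\ref{def:min 1} as disjointness of the two column sets), but the idea is identical.
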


\begin{proof}
    For each combinatorial type $(u,v)$, the elements selected for $u$ and $v$ must come from distinct columns $C_i$. First, choose $u$ columns from the $ k_2$ available, and then choose $v$ columns from the remaining $ k_2 - u$. This yields $\binom{ k_2}{u} \binom{ k_2 - u}{v}$ configurations. When $u\neq v$, we multiply by $2$ to account for the choice of which row contains $u$ elements in $S$.
\end{proof}

\section{Gr\"obner bases, radicality, and primeness of ideals $I_S$}
\label{sec:primeness}
In this section, we focus on minimal ideals $I_S$, i.e., the ones appearing in Theorem~\ref{thm:set_min_primes}.
We prove that the natural generating set $F_S$ for $I_S$ is a squarefree Gr\"{o}bner basis, as detailed in Corollary~\ref{prop:ISgb}. 
We then conclude that the ideals $I_S$  are radical. 
In the case  $d = t$, we prove the ideals $I_S$ are also prime by giving a polynomial parametrization from an irreducible variety; see Theorem~\ref{thm:ISprime}.
This shows that when $d = t$, the decomposition in Theorem~\ref{thm:set_min_primes} is a prime one. 
Our results are stronger for $S = \emptyset$: when $2\leq k_1$ and $2 \leq t \leq \min\{ k_2,d\}$, then $F_\emptyset$ is a Gr\"obner basis and $I_\emptyset$ is prime; this is the content of Theorems~\ref{prop:Iemptygb} and \ref{thm:param-empty}.

\medskip
Before proceeding with our main results, we recall some notation. Let $X$ denote the $d \times k_1k_2$ matrix of indeterminates. 
The columns of $X$ are ordered reverse lexicographically:
\[(1,1),(2,1),\ldots,(k_1,1),(1,2), (2,2),\ldots, (k_1, 2), (1,3), (2,3), \ldots,(k_1,k_2).\]
For the variables of our ring $R$, we use the lexicographic term order $\prec$ with respect to the total variable order
\[ x_{1(1,1)} > x_{2(1,1)} > \cdots > x_{d(1,1)} > x_{1(2,1)} > \cdots > x_{d,(k_1, k_2)}. \]
Additionally, $X_U$ denotes the submatrix of $X$ whose columns are indexed by the set $U$. For consecutive columns $(a,b),\ldots,(c,d)$ of our matrix $X$, we denote the corresponding submatrix by $X[(a,b),(c,d)] = X_{\{(a,b),\ldots,(c,d)\}}$.

\medskip
We now proceed with a key reduction. Recall the definition of the ideal $I_S$ and its generating set $F_S$ for $k_1=2$ from Definition~\ref{def:k_1=2}. For a minimal nonempty $S$, let $J_S$ denote the ideal generated by the following sets of minors in $F_S$:
\begin{align}\label{eq:FJS}
F(J_S) \coloneqq \bigcup_{\substack{i \in [ k_2] \\ C_i \cap S = \emptyset}} F_2(X_{C_i}) \cup F_{t-1}(X_{\mathcal C(S)}) \cup F_t(X_{(R_1 \backslash S) \cup \mathcal C(S)}) \cup F_t(X_{(R_2 \backslash S) \cup \mathcal C(S)}).
\end{align}
For any $k_1 \geq 2$, we have $C_i = [k_1] \times \{i\}$ and we define
\begin{align}\label{eq:Iemptygens}
    J_{\emptyset} \coloneqq I_\emptyset = I_t(X) + \sum_{i = 1}^{ k_2} I_2(X_{C_i})\quad\text{and}\quad 
        F_\emptyset \coloneqq F_t(X) \cup \bigcup_{i=1}^{ k_2} F_2(X_{C_i}).
\end{align}
\begin{remark}\label{rem:J_S}
    We observe that $I_S = I_1(X_S) + J_S$, where $I_1(X_S)$ is the ideal generated by the variables corresponding to the columns indexed by $S$. We claim that it suffices to prove that $J_S$ is prime in order to show that $I_S$ is prime. Specifically, since the ideal $I_1(X_S)$ is prime and the generators of $J_S$ and $I_1(X_S)$ involve disjoint sets of variables, if $J_S$ is prime, then their sum, $I_S = I_1(X_S) + J_S$, must also be prime.
  Moreover, a Gr\"{o}bner basis of $J_S$ can be extended to that of $I_S$ by adding the variables in $F_1(X_S)$.
\end{remark}

Since the generators of $J_S$ do not involve variables from the columns indexed by $S$, we define the matrix $\hat{X}$ as the matrix $X$ with the columns indexed by $S$ removed. We index the columns of $\hat{X}$ according to the original indexing of $X$. 

\medskip
We now outline our strategy. 
For each combinatorial type of minimal $S$, as in Definition~\ref{def:min}, there exists a representative ideal $J_{S'}$ whose natural generating set forms a Gr\"obner basis with respect to the chosen term order. We use this particular representative to prove that $J_S$ is radical for any set $S$ of the same combinatorial type. We illustrate this approach with the following example.

\begin{example}\label{example_Sec4}
    Let $k_1=2, t=4, k_2 = 6,d=4$, and $S = \{1\} \times \{2,4\} \cup \{2\} \times \{3,5\}$.
    This set has combinatorial type $(2, 2)$. This setup corresponds to the following grid, where the elements of $S$ are circled.
    \begin{center}
    \begin{tikzpicture}[scale=0.8]
    \tikzset{
        dot/.style={circle, fill, inner sep=0pt, minimum size=6pt},
        circled/.style={circle, draw, inner sep=4pt}
    }
        \foreach \x in {0,...,5}{
        \node[dot] at (\x,0) {};
        \node[label, font=\small] at (\x,-0.6) {};
        }
    \foreach \x in {0,...,5}{
        \node[dot] at (\x,1) {};
        \pgfmathtruncatemacro{\label}{int(\x + 1)};
        \node[label, font=\small] at (\x,1.6) {\label};
        }
           \node at (-0.8,1) {1};
    \node at (-0.8,0) {2};   
    \node[circled] at (1,1) {};
    \node[circled] at (2,0) {};
    \node[circled] at (3, 1) {};
    \node[circled] at (4, 0) {};
\end{tikzpicture}
\end{center}
    Then the ideal $J_S$ is generated by $F_2(X_{C_1}) \cup F_2(X_{C_6}) \cup F_{3}(X_{\{1,2\} \times \{1,6\}}) \cup F_4(X_{\{1\} \times \{3,5\} \cup \{1,2\} \times \{1,6\}}) \cup F_4(X_{\{2\} \times \{2,4\} \cup \{1,2\} \times \{1,6\}})$. 
    This set is not a Gr\"{o}bner basis with respect to the order $\prec$. 
    However, we may instead consider $S' = \{1\} \times \{1,2\} \cup \{2\} \times \{5,6\}$, which has the same combinatorial type.
    \begin{center}
    \begin{tikzpicture}[scale=0.8]
    \tikzset{
        dot/.style={circle, fill, inner sep=0pt, minimum size=6pt},
        circled/.style={circle, draw, inner sep=4pt}
    }
    
    \foreach \x in {0,...,5}{
        \node[dot] at (\x,0) {};
        \node[label, font=\small] at (\x,-0.6) {};
        }
    \foreach \x in {0,...,5}{
        \node[dot] at (\x,1) {};
        \pgfmathtruncatemacro{\label}{int(\x + 1)};
        \node[label, font=\small] at (\x,1.6) {\label};
        }
           \node at (-0.8,1) {1};
    \node at (-0.8,0) {2};   
    \node[circled] at (0,1) {};
    \node[circled] at (1,1) {};
    \node[circled] at (4, 0) {};
    \node[circled] at (5, 0) {};
\end{tikzpicture}
\end{center}
    The natural generating set of $J_{S'}$ is 
    \[ F_2(\hat{X}_{C_3}) \cup F_2(\hat{X}_{C_4}) \cup F_{3}(\hat{X}_{\{1,2\} \times \{3,4\}}) \cup F_4(\hat{X}_{\{2\} \times \{1,2\} \cup \{1,2\} \times \{3,4\}}) \cup F_4(\hat{X}_{\{1\} \times \{5,6\} \cup \{1,2\} \times \{3,4\}}),\] 
    where $\hat{X}$ is the matrix 
     \begin{equation*} 
     \hat{X} = \begin{pNiceMatrix}[r,left-margin=0.6em, right-margin=0.3em]
    x_{1(2,1)}  & x_{1(2,2)}  & x_{1(1,3)}  & x_{1(2,3)} & x_{1(1,4)} & x_{1(2,4)} & x_{1(1,5)} & x_{1(1,6)} \\
    x_{2(2,1)}  & x_{2(2,2)}  & x_{2(1,3)}  & x_{2(2,3)} & x_{2(1,4)} & x_{2(2,4)} & x_{2(1,5)} & x_{2(1,6)} \\
    x_{3(2,1)}  & x_{3(2,2)}  & x_{3(1,3)}  & x_{3(2,3)} & x_{3(1,4)} & x_{3(2,4)} & x_{3(1,5)} & x_{3(1,6)} \\
    x_{4(2,1)}  & x_{4(2,2)}  & x_{4(1,3)}  & x_{4(2,3)} & x_{4(1,4)} & x_{4(2,4)} & x_{4(1,5)} & x_{4(1,6)} 
    \CodeAfter
    \tikz \draw[red] ([xshift=3pt, yshift=-1.5pt] 1-|3) rectangle ([xshift=-3pt, yshift=-1.5pt]5-|5);
    \tikz \draw[red] ([xshift=3pt, yshift=-1.5pt]1-|5) rectangle ([xshift=-3pt, yshift=-1.5pt]5-|7);
    \tikz \draw[blue] ([yshift=1.5pt]1-|3) rectangle ([yshift=-4.5pt]5-|7);
    \tikz \draw[orange] ([yshift=4pt]1-|1) rectangle ([xshift=3pt ,yshift=-7pt]5-|7);
    \tikz \draw[orange] ([xshift=-3pt, yshift=7pt]1-|3) rectangle ([xshift = -3pt, yshift=-10pt]5-|9);
\end{pNiceMatrix}.
\end{equation*}\\
The ideal $J_{S'}$ is generated by $2$-minors in the red rectangles,  $3$-minors in the blue rectangle, and $4$-minors in the orange rectangles. 
This generating set is a Gr\"{o}bner basis with respect to our term order.
Note that the ideal $J_{S'}$ is generated by minors of submatrices consisting of adjacent columns.
\end{example}
For a fixed combinatorial type $(u,v) \neq (0,0)$, we choose the $u$ elements in the first row slice of $[2] \times [k_2]$ to be left-justified and the $v$ elements in the second row slice of $[2] \times [k_2]$ to be right-justified. 
In symbols, we choose $S = \{1\} \times \{1, \ldots, u\} \cup \{2\} \times \{k_2 - v + 1, \ldots, k_2\}$. Then, the ideal $J_S$ is defined as
\begin{align*}
    J_S \coloneqq I_t(\hat{X}[(2,1), (2,k_2 - v)]) &+ I_t(\hat{X}[(1,u+1)), (1,k_2)])\\ \nonumber
    &+ I_{t-1}(\hat{X}[(1,u+1), (2, k_2 - v)])
        + \sum_{i = u+1}^{ k_2 - v} I_2(\hat{X}[(1,i),(2,i)]),
\end{align*}
where $\hat{X}[(a,b), (c,d)]$ denotes the submatrix of $\hat{X}$ obtained by excluding the columns indexed by $S$,~i.e., $\hat{X}_{\{(a,b),\ldots,(c,d)\} \backslash S}$.
Crucially, this choice of representative $S$ guarantees that $J_S$ is the sum of ideals generated by minors from submatrices indexed by \emph{adjacent} columns of $\hat X$. We can then apply the following result. 
\begin{theorem}[Corollary 2.4, \cite{seccia}]\label{thm:minorsconsec}
    Let $I$ be an ideal of the form
    \[I=I_{t_1}(\hat{X}[(a_1,b_1),(c_1,d_1)])+\cdots+I_{t_r}(\hat{X}[(a_r,b_r),(c_r,d_r)]).\]
    Then
    \[F_{t_1}(\hat{X}[(a_1,b_1),(c_1,d_1)])\cup\cdots\cup F_{t_r}(\hat{X}[(a_r,b_r),(c_r,d_r)])\]
    forms a Gr\"obner basis for the ideal $I$.
\end{theorem}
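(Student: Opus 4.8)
The plan is to reduce the statement to an equality of initial ideals and then settle it with the theory of \emph{Knutson ideals} (with a Hilbert-function comparison as a fallback).

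First recall the classical fact that for a generic matrix the set of all $t$-minors is a Gr\"obner basis of the determinantal ideal with respect to any \emph{diagonal} term order, the initial ideal being the squarefree monomial ideal generated by the main-diagonal monomials of the $t$-minors (Sturmfels; Herzog--Trung; Bernstein--Zelevinsky; Caniglia--Guccione--Guccione; see the references in \cite{seccia}). The lexicographic order $\prec$ fixed above — induced by the column-by-column variable order $x_{11}>\cdots>x_{d1}>x_{12}>\cdots$ — restricts on every consecutive-column submatrix $X[a_i,b_i]$ to a diagonal term order: for a minor $[A\mid B]$ with $A=\{\alpha_1<\cdots<\alpha_t\}$ and $B=\{\beta_1<\cdots<\beta_t\}$ the $\prec$-largest term is $x_{\alpha_1\beta_1}\cdots x_{\alpha_t\beta_t}$. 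Hence each $F_{t_i}(X[a_i,b_i])$ is already a Gr\"obner basis of $I_{t_i}(X[a_i,b_i])$, with initial ideal a squarefree monomial ideal $\mathfrak m_i$. Writing $I=\sum_i I_{t_i}(X[a_i,b_i])$, what remains is exactly the equality $\operatorname{in}_\prec(I)=\sum_i\mathfrak m_i$: the inclusion $\sum_i\mathfrak m_i\subseteq\operatorname{in}_\prec(I)$ is automatic, and the reverse inclusion is precisely the claim that the union of the natural generating sets is a Gr\"obner basis of $I$.

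For the reverse inclusion I would invoke the Knutson-ideal machinery. One fixes a polynomial $f$ whose leading term $\operatorname{in}_\prec(f)$ is squarefree and considers the smallest family $\mathcal C_f$ of ideals containing $(f)$ and closed under sum, intersection, and colon/saturation; the foundational theorem (as recalled in \cite{seccia}) guarantees that every ideal in $\mathcal C_f$ is radical with squarefree initial ideal and that $\operatorname{in}_\prec(I+J)=\operatorname{in}_\prec(I)+\operatorname{in}_\prec(J)$ for $I,J\in\mathcal C_f$. The technical core is to produce a single $f$ for which every consecutive-column determinantal ideal $I_{t_i}(X[a_i,b_i])$ lies in $\mathcal C_f$: the natural candidate is a product of contiguous top-justified minors whose column windows sweep across $[1,k\ell]$, chosen so that $\operatorname{in}_\prec(f)$ is squarefree and each $I_{t_i}(X[a_i,b_i])$ is obtained from $(f)$ by iterated sums, intersections, and colons. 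Granting this, closure of $\mathcal C_f$ under sums together with additivity of initial ideals yields $\operatorname{in}_\prec(I)=\sum_i\mathfrak m_i$, proving the theorem; as a bonus, radicality of $I$ (used in Section~\ref{sec:primeness}) follows from the same result.

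The main obstacle is this membership step: showing that \emph{mixed} sizes $t_i$ on \emph{overlapping} consecutive-column windows can all be generated inside $\mathcal C_f$ from one well-chosen $f$. This is the delicate point, since for general (non-consecutive) arrangements of minors the union of the natural Gr\"obner bases fails, so the argument must genuinely exploit the consecutive/ladder structure. As an alternative to the Knutson framework, one can instead establish $\operatorname{in}_\prec(I)\subseteq\sum_i\mathfrak m_i$ by a Hilbert-function comparison: since $\sum_i\mathfrak m_i\subseteq\operatorname{in}_\prec(I)$ gives a graded surjection $R/\!\sum_i\mathfrak m_i\twoheadrightarrow R/\operatorname{in}_\prec(I)\cong R/I$, it suffices to show $R/\!\sum_i\mathfrak m_i$ and $R/I$ have the same Hilbert series — e.g.\ by recognizing $R/I$ as a ladder/mixed determinantal ring of known Hilbert series and matching it against the Stanley--Reisner count for the squarefree monomial ideal $\sum_i\mathfrak m_i$ — whence the surjection is an isomorphism and $\sum_i\mathfrak m_i=\operatorname{in}_\prec(I)$. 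A third, more computational route is Buchberger's criterion: $S$-pairs within a single $F_{t_i}(X[a_i,b_i])$ reduce to zero by the classical result, so one only checks $S$-pairs between a $t_i$-minor and a $t_{i'}$-minor from overlapping windows, reducing them via Pl\"ucker/Garnir-type straightening relations. In every approach the real work is concentrated in the bookkeeping for overlapping windows of different sizes.
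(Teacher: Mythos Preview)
The paper does not prove this statement at all: it is quoted verbatim as Corollary~2.4 of \cite{seccia} and used as a black box to derive Corollaries~\ref{prop:Iemptygb} and~\ref{prop:ISgb}. So there is no ``paper's own proof'' to compare against.

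That said, your primary route via Knutson ideals is exactly the method of the cited reference \cite{seccia}: Seccia constructs a single polynomial $f$ (a product of leading-corner minors sweeping across the columns) with squarefree leading term, shows that every $I_t(X[a,b])$ lies in the Knutson family $\mathcal{C}_f$, and then uses closure under sums together with the additivity $\operatorname{in}_\prec(I+J)=\operatorname{in}_\prec(I)+\operatorname{in}_\prec(J)$ for Knutson ideals to conclude. You have correctly isolated the only nontrivial step --- the membership of each $I_{t_i}(X[a_i,b_i])$ in $\mathcal{C}_f$ --- and your description of the candidate $f$ is on target. Your two fallback strategies (Hilbert-series comparison, Buchberger with straightening) are reasonable alternatives but are not what \cite{seccia} does, and each would require substantially more bookkeeping than the Knutson argument.
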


Note that these ideals differ from the ideals of adjacent minors (and their generalizations) studied in \cite{HS04, mohammadi2018prime}, though they include them as special cases.

As immediate corollaries of Theorem~\ref{thm:minorsconsec} we have the following:

\begin{corollary}\label{prop:Iemptygb}
Suppose $2\leq k_1$ and  $2 \leq t \leq \min\{ k_2,d\}$.
The set $F_\emptyset$ is a Gr\"{o}bner basis for~$I_\emptyset$.
\end{corollary}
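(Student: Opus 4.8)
### Proof proposal for Corollary~\ref{prop:Iemptygb}

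The plan is to exhibit $I_\emptyset$ explicitly as a sum of ideals of minors taken from submatrices built on \emph{consecutive} blocks of columns of $X$, and then invoke Theorem~\ref{thm:minorsconsec} verbatim. By \eqref{eq:Iemptygens} we have
\[
I_\emptyset = I_t(X) + \sum_{i=0}^{\ell-1} I_2\bigl(X[ki+1,\,ki+k]\bigr),
\]
and each summand is already of the required shape: $I_t(X) = I_t(X[1,k\ell])$ is the ideal of $t$-minors of a block of consecutive columns (namely all of them), and for each $i$ the block $X[ki+1,ki+k]$ consists of $k$ consecutive columns, so $I_2(X[ki+1,ki+k])$ is the ideal of $2$-minors of a consecutive block. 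Hence $I_\emptyset$ is literally an ideal of the form $I_{t_1}(X[a_1,b_1]) + \cdots + I_{t_r}(X[a_r,b_r])$ with $r = \ell+1$, $(t_1,a_1,b_1) = (t,1,k\ell)$, and $(t_{i+2},a_{i+2},b_{i+2}) = (2,\,ki+1,\,ki+k)$ for $i = 0,\dots,\ell-1$.

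First I would record that the hypothesis $2 \le t \le \min\{\ell,d\}$ guarantees that $F_t(X)$ is a nonempty set of honest (non-unit, non-zero) minors, so that all the summand ideals are well-defined and the generating set $F_\emptyset$ from \eqref{eq:Iemptygens} is exactly the union $F_t(X) \cup \bigcup_{i=0}^{\ell-1} F_2(X[ki+1,ki+k])$ of the minor generators of the respective summands. Then I would apply Theorem~\ref{thm:minorsconsec} (Corollary 2.4 of \cite{seccia}) directly to this presentation: it asserts precisely that the union of the minor generating sets $F_{t_j}(X[a_j,b_j])$ forms a Gröbner basis for the sum ideal, with respect to the lex term order $\prec$ fixed before the corollary (induced by the total variable order $x_{11} > x_{21} > \cdots > x_{d,k\ell}$). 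This immediately yields that $F_\emptyset$ is a Gröbner basis for $I_\emptyset$.

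There is essentially no obstacle here — the only thing to check is the \emph{bookkeeping}: that the column-block indexing in \eqref{eq:Iemptygens}, inherited from the grid $\mathcal{G}$ (where column $C_i$ of the $k\times\ell$ grid corresponds to matrix columns $k(i-1)+1,\dots,ki$), matches the ``consecutive columns'' hypothesis of Theorem~\ref{thm:minorsconsec} under the chosen variable order, and that the full matrix $X$ itself counts as a block of consecutive columns $[1,k\ell]$. Both are immediate from the definitions, so the corollary follows at once; the substantive content is entirely in Theorem~\ref{thm:minorsconsec}, which we are free to cite. (The analogous but more delicate case $S \neq \emptyset$, where one must first pass to a favourable representative $S$ of each combinatorial type so that $J_S$ becomes a sum of consecutive-block minor ideals as displayed before Theorem~\ref{thm:minorsconsec}, is handled in Corollary~\ref{prop:ISgb}.)
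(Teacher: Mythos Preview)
Your proposal is correct and follows exactly the paper's own approach: apply Theorem~\ref{thm:minorsconsec} to the presentation \eqref{eq:Iemptygens}, noting that $I_t(X)=I_t(X[1,k\ell])$ and each $I_2(X[ki+1,ki+k])$ are minor ideals on consecutive column blocks. The paper's proof is the one-line ``Apply Theorem~\ref{thm:minorsconsec} to \eqref{eq:Iemptygens}''; your version simply spells out the bookkeeping.
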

\begin{proof}
Apply Theorem~\ref{thm:minorsconsec} to \eqref{eq:Iemptygens}.
\end{proof}
\begin{corollary}\label{prop:ISgb}
Suppose $k_1 = 2$ and $2 \leq t \leq \min\{ k_2,d\}$. 
If $\emptyset \neq S \subseteq[2 k_2]$ is minimal of combinatorial type $(u, v)$, where $S = \{1\} \times \{1, \ldots, u\} \cup \{2\} \times \{k_2 - v + 1, \ldots, k_2\}$, then the 
set $F(J_S)$ in \eqref{eq:FJS} forms a Gr\"{o}bner basis for $J_S$.  
\end{corollary}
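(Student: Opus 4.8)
The plan is to reduce Corollary~\ref{prop:ISgb} directly to Theorem~\ref{thm:minorsconsec}. The whole point of the choice of representative $S = \{1,3,\ldots,2u-1\}\cup\{2(\ell-v+1),\ldots,2\ell\}$ is that, after deleting the columns indexed by $S$ and re-reading the remaining columns of $\tilde X$ in their natural left-to-right order, every block of columns appearing in the generating set of $J_S$ becomes a block of \emph{consecutive} columns of $\tilde X$. So the first step is to make this precise: I would write down the explicit column blocks of $\tilde X$ corresponding to $\tilde X[2,2(\ell-v)]$, $\tilde X[2u+1,2\ell-1]$, $\tilde X[2u+1,2(\ell-v)]$, and the $2\times 2$ blocks $\tilde X[2i-1,2i]$ for $u+1\le i\le \ell-v$, and verify that in each case the surviving columns form an interval in the indexing of $\tilde X$. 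Concretely: the columns of $S$ in row~$1$ are exactly the odd columns $1,3,\ldots,2u-1$ of $\mathcal G$ (the first $u$ of them), and the columns of $S$ in row~$2$ are the even columns $2(\ell-v+1),\ldots,2\ell$ (the last $v$ of them); so after deletion the columns $2,4,\ldots,2u$ (originally paired with deleted odd columns) survive on their own at the left, the columns $2i-1,2i$ for $u+1\le i\le\ell-v$ survive as full pairs in the middle, and the columns $2(\ell-v)+1,\ldots,2\ell-1$ survive on their own at the right. In each of the four families of minors, the index set is a union of consecutive such surviving columns, hence an interval of $\tilde X$.

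The second step is purely bookkeeping: by Remark~\ref{rem:J_S} and Definition~\ref{def:k=2}, $F(J_S)$ as defined in \eqref{eq:FJS} equals the union of $F_2$ of the $2\times 2$ blocks $X_{\{2i-1,2i\}}$ with $C_i\cap S=\emptyset$, $F_{t-1}(X_{\mathcal C(S)})$, and $F_t(X_{(R_j\setminus S)\cup\mathcal C(S)})$ for $j=1,2$. With the chosen $S$ one checks directly that $\mathcal C(S)=\{2u+1,\ldots,2(\ell-v)\}$, that $(R_1\setminus S)\cup\mathcal C(S)$ consists of the columns $\{2,4,\ldots,2u\}\cup\mathcal C(S) = \{2,\ldots,2(\ell-v)\}\cap(\text{surviving columns})$, i.e.\ exactly $\tilde X[2,2(\ell-v)]$, and similarly $(R_2\setminus S)\cup\mathcal C(S)$ is $\tilde X[2u+1,2\ell-1]$, and the $(t-1)$-minor block is $\tilde X[2u+1,2(\ell-v)]$. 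Thus $F(J_S)$ is literally $F_{t}(\tilde X[2,2(\ell-v)])\cup F_t(\tilde X[2u+1,2\ell-1])\cup F_{t-1}(\tilde X[2u+1,2(\ell-v)])\cup\bigcup_{i=u+1}^{\ell-v} F_2(\tilde X[2i-1,2i])$, which is exactly the generating set of the ideal $J_S$ displayed just before the statement of Theorem~\ref{thm:minorsconsec}.

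The third step is to apply Theorem~\ref{thm:minorsconsec} verbatim: with $X$ replaced by $\tilde X$, $J_S$ is a sum $I_{t_1}(\tilde X[a_1,b_1])+\cdots+I_{t_r}(\tilde X[a_r,b_r])$ of ideals of consecutive minors, so the union of the $F_{t_i}(\tilde X[a_i,b_i])$ is a Gr\"obner basis for $J_S$ with respect to the lex order $\prec$. Since these minors are squarefree in their leading terms (being minors of a generic matrix read in a diagonal-type term order), the Gr\"obner basis is squarefree, which is what is needed downstream for radicality. The only genuinely non-routine point — and the one I would be most careful about — is the verification in the first step that the deletion of the $S$-columns really does turn \emph{all} of the relevant index sets into intervals of $\tilde X$ simultaneously; this relies crucially on the $S$-columns in row~$1$ being left-justified among the odd columns and those in row~$2$ being right-justified among the even columns, and on Condition~\ref{def:min 2} of Definition~\ref{def:min} guaranteeing $u+v\le\ell$ so that the middle block $\mathcal C(S)$ is nonempty (or at worst empty, in which case the corresponding minor families are vacuous and the statement still holds). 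I do not expect any obstruction beyond this indexing check, since Theorem~\ref{thm:minorsconsec} does all the real work once the generators are exhibited in consecutive form.
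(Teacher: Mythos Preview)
Your approach is exactly the paper's: rewrite $F(J_S)$ for this particular representative as the union in~\eqref{eq:JSgens} of minors of consecutive column blocks of $\tilde X$, then invoke Theorem~\ref{thm:minorsconsec} directly. One small bookkeeping slip in your second step: you have $R_1$ and $R_2$ interchanged, since $R_1$ consists of the odd indices and $R_2$ of the even ones, so it is $(R_2\setminus S)\cup\mathcal C(S)=\{2,4,\ldots,2u\}\cup\mathcal C(S)$ that gives $\tilde X[2,2(\ell-v)]$ and $(R_1\setminus S)\cup\mathcal C(S)$ that gives $\tilde X[2u+1,2\ell-1]$; with that swap the identification is correct and the proof goes through as you describe.
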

\begin{proof} 
    For this particular representative $S$, $F(J_S)$ equals
    \begin{align}\label{eq:JSgens}
        F(J_S)&=F_t(\hat{X}[(2,1), (2,k_2 - v)]) \cup F_t(\hat{X}[(1,u+1), (1,k_2)]) \\
        &\cup F_{t-1}(\hat{X}[(1,u+1), (2,k_2 - v)])
    \cup \bigcup_{i = u+1}^{ k_2 - v} F_2(\hat{X}[(1,i),(2,i)])
    \end{align}
    and we may apply Theorem~\ref{thm:minorsconsec}.
\end{proof}

By Remark~\ref{rem:J_S}, since \eqref{eq:JSgens} does not involve variables from the columns indexed by $S$, it can be extended to a Gr\"{o}bner basis for $I_S$ by adding the variables from $F_1(X_S)$. 
Since the leading terms of the Gr\"{o}bner bases of $J_S$ and $I_S$ are squarefree, we have the immediate corollary:
\begin{corollary} \label{cor:ISradical}
Suppose $2 \leq t \leq \min\{ k_2,d\}$.
    For $k\geq 2$, the ideal $I_\emptyset$ is radical. For $k_1=2$, the ideals $I_S$ and $J_S$ are radical for all minimal $S$.
\end{corollary}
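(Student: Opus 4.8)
The statement to prove, Corollary~\ref{cor:ISradical}, asserts that $I_\emptyset$ is radical for all $k \geq 2$, and that $I_S$ and $J_S$ are radical for all minimal $S$ when $k = 2$. The plan is to deduce this purely formally from the Gröbner basis results already established in Corollaries~\ref{prop:Iemptygb} and~\ref{prop:ISgb}, using the standard fact that an ideal whose initial ideal (with respect to some term order) is squarefree monomial is automatically radical. First I would recall this criterion explicitly: if $\mathrm{in}_\prec(I)$ is a squarefree monomial ideal, then $I$ is radical. This is because a squarefree monomial ideal is radical, hence so is $\mathrm{in}_\prec(I)$, and radicality ascends: if $f^m \in I$ then $\mathrm{in}_\prec(f)^m = \mathrm{in}_\prec(f^m) \in \mathrm{in}_\prec(I)$, so $\mathrm{in}_\prec(f) \in \mathrm{in}_\prec(I)$ (as $\mathrm{in}_\prec(I)$ is radical); subtracting off a suitable multiple of a Gröbner basis element and inducting on the leading term shows $f \in I$.

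With this criterion in hand, the proof reduces to checking that the leading terms of the Gröbner bases in question are squarefree. For $I_\emptyset$ with $k \geq 2$: by Corollary~\ref{prop:Iemptygb}, $F_\emptyset = F_t(X) \cup \bigcup_{i=0}^{\ell-1} F_2(X[ki+1, ki+k])$ is a Gröbner basis. The leading term of any minor of a matrix of distinct indeterminates, under a diagonal term order such as the lexicographic order $\prec$ fixed in Section~\ref{sec:primeness}, is the product of the entries on the main diagonal, which is squarefree since all the $x_{ij}$ are distinct variables. Hence $\mathrm{in}_\prec(I_\emptyset)$ is generated by squarefree monomials, so $I_\emptyset$ is radical. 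For $k = 2$ and minimal $S$: by Corollary~\ref{prop:ISgb} the generating set $F(J_S)$ in \eqref{eq:JSgens} is a Gröbner basis for $J_S$, and again each generator is a minor of a submatrix of $\tilde X$ consisting of distinct indeterminates, so its leading term under $\prec$ is the squarefree product of diagonal entries; thus $J_S$ is radical. Finally, by Remark~\ref{rem:J_S}, a Gröbner basis of $I_S$ is obtained from that of $J_S$ by adjoining the variables in $F_1(X_S)$, which are trivially squarefree and are themselves their own leading terms, so $\mathrm{in}_\prec(I_S)$ is squarefree monomial and $I_S$ is radical.

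The one mild subtlety — and the only place that requires any care — is confirming that $\prec$ is a \emph{diagonal} term order for each relevant submatrix, i.e.\ that for a minor on rows $A = \{i_1 < \cdots < i_m\}$ and columns $B = \{j_1 < \cdots < j_m\}$, the leading monomial is $x_{i_1 j_1} \cdots x_{i_m j_m}$. This follows from the chosen total order $x_{11} > x_{21} > \cdots > x_{d1} > x_{12} > \cdots > x_{d,k\ell}$: columns are ordered first and rows second within a column, so the lexicographic order picks out, in each expansion term $\pm \prod_u x_{i_{\sigma(u)} j_u}$, the permutation $\sigma$ that is identity — a short argument comparing the smallest-column-index factor, then recursing. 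Since this is the standard setup, I would either cite it or dispatch it in one sentence. There is no real obstacle here: the corollary is genuinely immediate once the Gröbner bases are known, and the proof in the paper should be a two-line appeal to the squarefree-initial-ideal criterion together with the observation that all the generators are minors of matrices of distinct variables.

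\begin{proof}
Recall that an ideal $I$ with $\mathrm{in}_\prec(I)$ a squarefree monomial ideal is radical: squarefree monomial ideals are radical, and if $f^m\in I$ then $\mathrm{in}_\prec(f)^m\in\mathrm{in}_\prec(I)$, so $\mathrm{in}_\prec(f)\in\mathrm{in}_\prec(I)$, and descending induction on $\mathrm{in}_\prec(f)$ gives $f\in I$. With respect to the term order $\prec$ of Section~\ref{sec:primeness}, which is diagonal on every submatrix of $X$ consisting of distinct indeterminates, the leading term of any minor $[A|B]$ is the product of the entries on its main diagonal, which is squarefree. For $k\geq 2$, Corollary~\ref{prop:Iemptygb} shows $F_\emptyset$ is a Gr\"obner basis of $I_\emptyset$, and every element of $F_\emptyset$ is such a minor, so $\mathrm{in}_\prec(I_\emptyset)$ is squarefree monomial and $I_\emptyset$ is radical. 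For $k=2$ and minimal $S$, Corollary~\ref{prop:ISgb} shows the set \eqref{eq:JSgens} is a Gr\"obner basis of $J_S$; its elements are minors of submatrices of $\tilde X$ consisting of distinct indeterminates, so $\mathrm{in}_\prec(J_S)$ is squarefree monomial and $J_S$ is radical. Finally, by Remark~\ref{rem:J_S}, adjoining the variables of $F_1(X_S)$ to this Gr\"obner basis yields a Gr\"obner basis of $I_S$, and these added generators equal their own (squarefree) leading terms, so $\mathrm{in}_\prec(I_S)$ is again squarefree monomial and $I_S$ is radical.
\end{proof}
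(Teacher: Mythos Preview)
Your proof is correct and follows essentially the same approach as the paper: the paper also deduces radicality immediately from the fact that the Gr\"obner bases established in Corollaries~\ref{prop:Iemptygb} and~\ref{prop:ISgb} have squarefree leading terms, combined with Remark~\ref{rem:J_S} to pass from $J_S$ to $I_S$. The only minor imprecision is that Corollary~\ref{prop:ISgb} is stated for the particular representative $S = \{1,3,\ldots,2u-1\}\cup\{2(\ell-v+1),\ldots,2\ell\}$ of each combinatorial type, so extending radicality to \emph{all} minimal $S$ uses the isomorphism between ideals of the same type (set up before Example~\ref{example_Sec4}); this is implicit in the paper as well.
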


We now present a parametrization of the variety $V_S = V(J_S)$ when $d = t$ and $k_1=2$, which shows that $V(J_S)$ is irreducible, and hence $J_S$ is prime. This result also holds for all $d \geq t$ and $k\geq 2$ when $S = \emptyset$, which we will address separately. The key idea behind the parametrization is to first find a basis for the column space of each submatrix from which we take minors, and then express the columns of these submatrices in terms of the chosen basis using appropriate coefficients.

\begin{theorem}\label{thm:param-empty}
Suppose $2\leq k_1$ and $2\leq t \leq \min\{ k_2,d\}$.
    Then the ideal $I_\emptyset$ is prime.
\end{theorem}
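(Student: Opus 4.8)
The plan is to exhibit an explicit polynomial parametrization of the variety $V(I_\emptyset)$ and conclude that $I_\emptyset$ is prime because its vanishing locus is the closure of the image of an irreducible affine space under a polynomial map. Recall that $I_\emptyset = I_t(X) + \sum_{i=0}^{\ell-1} I_2(X[ki+1, ki+k])$. The $2$-minor conditions on each consecutive block of $k$ columns say that, on $V(I_\emptyset)$, each $k$-column block $X[ki+1, ki+k]$ has rank at most $1$; that is, for each $i \in [\ell]$, all $k$ columns in block $i$ are scalar multiples of a single vector $w^{(i)} \in \CC^d$. The $t$-minor condition on all of $X$ says that the whole matrix has rank at most $t-1$. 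So the plan is: first build the rank-$\leq 1$ structure on each block by introducing a vector $w^{(i)} \in \CC^d$ for each block together with scalars $\lambda^{(i)}_1, \dots, \lambda^{(i)}_k$ scaling the columns within block $i$, and then superimpose the global rank-$\leq (t-1)$ constraint by forcing all the $w^{(i)}$ to lie in a common $(t-1)$-dimensional subspace. Concretely, introduce a $d \times (t-1)$ matrix $G = (g_{ab})$ of parameters (a basis of the rank-$(t-1)$ column space) and, for each block $i$, a vector $c^{(i)} \in \CC^{t-1}$, so that $w^{(i)} = G c^{(i)}$; then set $x_{a, \, ki + j} = \lambda^{(i)}_j \, (G c^{(i)})_a = \lambda^{(i)}_j \sum_{b=1}^{t-1} g_{ab} c^{(i)}_b$. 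This is a polynomial map $\varphi$ from the affine space with coordinates $\{g_{ab}\}, \{c^{(i)}_b\}, \{\lambda^{(i)}_j\}$ to $\CC^{d \times k\ell}$.

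The key steps, in order, are as follows. (1) Write down $\varphi$ precisely and check $\operatorname{im}\varphi \subseteq V(I_\emptyset)$: every block of $k$ columns consists of scalar multiples of the single vector $G c^{(i)}$, so all $2$-minors within a block vanish; and every column of the image lies in the column span of $G$, which has dimension $\leq t-1$, so all $t$-minors of $X$ vanish. Hence $I_\emptyset \subseteq I(\overline{\operatorname{im}\varphi})$, and the latter is prime because the domain of $\varphi$ is an irreducible variety (an affine space), so its image closure is irreducible. (2) Prove the reverse inclusion $V(I_\emptyset) \subseteq \overline{\operatorname{im}\varphi}$ — in fact it is cleanest to show $V(I_\emptyset) \subseteq \operatorname{im}\varphi$ on the level of points, or at least that a dense subset of $V(I_\emptyset)$ is hit, which suffices for the closure. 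Given $A \in V(I_\emptyset)$: the rank-$\leq 1$ condition on each block lets us pick, for each block $i$ with a nonzero column, a representative nonzero column $w^{(i)}$ and scalars $\lambda^{(i)}_j$ with $A_{\,ki+j} = \lambda^{(i)}_j w^{(i)}$ (handling all-zero blocks trivially by taking $w^{(i)}$ arbitrary and $\lambda^{(i)}_j = 0$). The rank-$\leq(t-1)$ condition on $A$ means the span of $\{w^{(i)} : i \in [\ell]\}$ has dimension $\leq t - 1$, so we may choose a $d\times(t-1)$ matrix $G$ whose columns span a $(t-1)$-dimensional subspace containing all the $w^{(i)}$, and then write $w^{(i)} = G c^{(i)}$ for suitable $c^{(i)}$. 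This produces a preimage of $A$ under $\varphi$, giving $V(I_\emptyset) \subseteq \operatorname{im}\varphi \subseteq \overline{\operatorname{im}\varphi}$. (3) Combine: $V(I_\emptyset) = \overline{\operatorname{im}\varphi}$ is irreducible, so $\sqrt{I_\emptyset}$ is prime; and since $I_\emptyset$ is radical by Corollary~\ref{cor:ISradical}, $I_\emptyset$ itself is prime.

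I expect the main obstacle to be the careful bookkeeping in step (2), especially the degenerate cases — blocks that are entirely zero, and the subtlety that a rank-$\leq(t-1)$ matrix might have column span of dimension strictly less than $t-1$, so $G$ should be padded out (e.g. by appending extra columns, or zero columns, or by first proving the statement generically on the locus where the span has full dimension $t-1$ and all blocks are nonzero, then taking closures). Passing to closures is the safe route: it is enough to check that the locus in $V(I_\emptyset)$ where each block is nonzero and the total rank equals $t-1$ is dense in $V(I_\emptyset)$ and contained in $\operatorname{im}\varphi$, which avoids the degenerate casework entirely. A secondary, more cosmetic point is simply confirming that $\varphi$ is a genuine polynomial map (it is, being built from products of the parameters) so that $\overline{\operatorname{im}\varphi}$ is an irreducible closed subvariety and $I(\overline{\operatorname{im}\varphi})$ is prime. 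One should also note that this argument uses $d \geq t$ only implicitly through $t \leq \min\{\ell, d\}$, so that a $(t-1)$-dimensional subspace of $\CC^d$ exists; everything else goes through for all $k \geq 2$, matching the statement.
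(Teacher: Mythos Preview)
Your proposal is correct and follows essentially the same approach as the paper: both parametrize $V(I_\emptyset)$ by a product $M\cdot N$ of a $d\times(t-1)$ matrix with a $(t-1)\times\ell$ matrix (your $G$ and the $c^{(i)}$), followed by scaling within each $k$-block, and then invoke radicality from Corollary~\ref{cor:ISradical}. The only cosmetic difference is that the paper uses $k-1$ scalars per block (fixing the first column's coefficient to $1$ via the matrix $D_A$) whereas you use $k$ scalars $\lambda^{(i)}_1,\dots,\lambda^{(i)}_k$; your extra redundancy actually makes the surjectivity step slightly cleaner in the degenerate cases you flag.
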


\begin{proof}
 The ideal $I_\emptyset$ is radical by Corollary~\ref{cor:ISradical}. To show that it is prime, we define the space
$$
X = \mathbb{C}^{d \times(t-1)} \times \mathbb{C}^{(t - 1) \times  k_2} \times \mathbb{C}^{ k_2 \times (k_1-1)},
$$

Note that $X$ is irreducible, as it is a product of irreducible varieties over the algebraically closed field $\mathbb{C}$.
We now define the map
\[
\varphi \colon X \to \mathbb{C}^{d \times k_1k_2},
\quad\text{given by}\quad (M, N, A) \mapsto M \cdot N \cdot D_A,\]
where, for each $A = (a_{ij}) \in \mathbb{C}^{ k_2 \times (k_1-1)}$, the matrix $D_A$ is the $ k_2 \times k_1k_2$ matrix defined by
\begin{eqnarray}\label{eq:D_alpha}
    D_A = \left (
\begin{array}{ccccccccccccccccccccccccccc}
1 & a_{11} & \cdots &a_{1,k_1-1} \\
& & & & 1 & a_{21} & \cdots & a_{2,k_1-1} \\
& & &  & & & & & \ddots \\
& & &  &  &  & & & & 1 & a_{ k_21} & \cdots & a_{ k_2,k_1-1}\\
\end{array}\right ).
\end{eqnarray}

By construction, the coordinates of the points in the image of $\varphi$ are polynomials in the entries of $M,N,A$. Therefore, $\varphi$ is a polynomial map, and thus continuous. Hence, the image of $\varphi$ is irreducible. We now proceed to show that ${\rm im}(\varphi)=V_\emptyset$.

 It is clear that ${\rm im}(\varphi) \subseteq V_\emptyset$. This follows from the fact that $M$ has rank at most $t - 1$, so the product $\varphi(M, N, A)$ also has rank at most $t - 1$. Additionally, multiplication by the matrix $D_A$ ensures that specific pairs of columns in $\varphi(M, N, A)$ are properly identified.

  To show that $V_\emptyset \subseteq {\rm im}(\varphi)$, let $C \in V_\emptyset$. One obtains the $a_{ij}$ for $i \in [ k_2]$ and $j \in [k_1-1]$ by taking the ratios between columns $(1,i)$ and $(1+j,i)$ of $C$. Let the columns of $M$ form any spanning set for the column space of $C$. Finally, define the columns of $N$ to be the coefficients needed to express column $(1,i)$ of $C$ in terms of the spanning set given by $M$ for $i \in [k_2]$. 
  
Thus, we have $V_\emptyset = {\rm im}(\varphi)$, and since ${\rm im}(\varphi)$ is irreducible, the proof is complete.
\end{proof}

\begin{theorem}\label{thm:ISprime}
    Suppose $k_1=2\leq t \leq \min\{ k_2,d\}$.
    Then the ideal $I_S + I_{t + 1}(\hat X)$ is prime for all minimal $S \neq \emptyset$.
    In particular, $I_S$ is prime when $d = t$.
\end{theorem}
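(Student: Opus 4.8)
The plan is to build an explicit polynomial parametrization of $V(I_S + I_{t+1}(\tilde X))$ whose domain is an irreducible variety, exactly as in the proof of Theorem~\ref{thm:param-empty}, but now adapted to the "staircase" structure dictated by the combinatorial type $(u,v)$ of a minimal representative $S$. By Remark~\ref{rem:J_S} it suffices to parametrize $V(J_S + I_{t+1}(\tilde X))$, since $I_S = I_1(X_S) + J_S$ and the extra variables $F_1(X_S)$ only contribute a trivial irreducible factor (a coordinate subspace). Fix the standard representative $S = \{1,3,\ldots,2u-1\}\cup\{2(\ell-v+1),\ldots,2\ell\}$ as in Corollary~\ref{prop:ISgb}, so that $J_S$ is the sum of $I_t(\tilde X[2,2(\ell-v)])$, $I_t(\tilde X[2u+1,2\ell-1])$, $I_{t-1}(\tilde X[2u+1,2(\ell-v)])$, and the pairwise $2$-minor ideals $I_2(\tilde X[2i-1,2i])$ for $u+1\le i\le\ell-v$. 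Geometrically, a matrix $C$ in the variety has: the "middle" block of columns (those indexed by $\mathcal C(S)$) of rank $\le t-1$; each middle column $C_{2i}$ a scalar multiple of $C_{2i-1}$; the union of the left overhang $R_1\setminus S$ with the middle block of rank $\le t$; the union of the right overhang $R_2\setminus S$ with the middle block of rank $\le t$; and the whole matrix of rank $\le t$ once we impose $I_{t+1}(\tilde X)$.

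\textbf{Key steps.} First I would choose the parameter space to be a product of affine spaces encoding the following data: a rank-$(t-1)$ "core" factorization $M_0 N_0$ for the span $W$ of the middle columns ($M_0 \in \mathbb C^{d\times(t-1)}$, $N_0 \in \mathbb C^{(t-1)\times(\ell-u-v)}$); one extra vector $w \in \mathbb C^d$ so that $W \oplus \langle w\rangle$ is the (at most) $t$-dimensional column space of the full matrix; coefficient vectors expressing the left-overhang columns and the right-overhang columns in the basis $\{$columns of $M_0\} \cup \{w\}$; and the diagonal scalars $a_i$ realizing $C_{2i} = a_i C_{2i-1}$ on the middle block (a matrix $D_A$ of the same shape as in \eqref{eq:D_alpha}, but only for the middle index range). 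The map $\varphi$ sends this data to the matrix assembled column-by-column; its coordinates are manifestly polynomial, so $\mathrm{im}(\varphi)$ is irreducible. The containment $\mathrm{im}(\varphi)\subseteq V(J_S + I_{t+1}(\tilde X))$ is a direct rank count: every relevant submatrix factors through a space of the prescribed dimension. For the reverse containment, given $C$ in the variety, I would recover $W$ as the column space of $\tilde X_{\mathcal C(S)}$ evaluated at $C$, pick any basis of it for $M_0$, read off $N_0$ and the scalars $a_i$ from ratios of columns, then — using that $C$ has rank $\le t$ — extend to a basis of the full column space by one vector $w$, and finally solve for the overhang coefficient vectors; here one must check the degenerate cases where $W$ has dimension $<t-1$ or the rank of $C$ is $<t$, handling them by padding the chosen spanning sets (just as the $S=\emptyset$ argument says "any spanning set"). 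This shows $V = \mathrm{im}(\varphi)$, hence irreducible, hence $I_S + I_{t+1}(\tilde X)$ is prime; radicality already comes from Corollary~\ref{cor:ISradical} together with the observation that adding $I_{t+1}(\tilde X)$ preserves the squarefree-Gröbner-basis property via Theorem~\ref{thm:minorsconsec}, but in fact once we know the ideal is the vanishing ideal of an irreducible variety we get primeness directly. The final clause, that $I_S$ itself is prime when $d=t$, is immediate: when $d=t$ the matrix $\tilde X$ has only $t$ rows, so $I_{t+1}(\tilde X) = 0$ and $I_S + I_{t+1}(\tilde X) = I_S$.

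\textbf{Main obstacle.} The crux is verifying surjectivity of $\varphi$ onto the variety, specifically reconstructing the single "overflow" direction $w$ consistently for \emph{both} overhangs at once: the left overhang together with the middle must have rank $\le t$, the right overhang together with the middle must have rank $\le t$, and the entire matrix must have rank $\le t$ — and it is the last condition ($I_{t+1}(\tilde X)$, which is why it appears in the statement) that forces both overhangs to live in the \emph{same} $t$-dimensional space $W + \langle w\rangle$ rather than in two a priori different $t$-dimensional extensions of $W$. Without that hypothesis the variety would have several components (the left and right overhangs could stick out in independent directions), which is exactly why the theorem is stated for $I_S + I_{t+1}(\tilde X)$ and only specializes to $I_S$ when $d=t$ makes $I_{t+1}(\tilde X)$ vacuous. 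A secondary bookkeeping nuisance is the boundary behavior when $u$, $v$, or $\ell-u-v$ is small — e.g. an empty middle block, or $u=v$, or $\dim W<t-1$ — where the generating set of $J_S$ drops some families of minors (recall $F_{t-1}$ is included only under the two-sided overhang condition in Definition~\ref{def:k=2}); these cases should be checked to confirm the parametrization degenerates correctly, but they do not affect the irreducibility conclusion.
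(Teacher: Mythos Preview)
Your overall strategy (polynomial parametrization from an affine space, as in Theorem~\ref{thm:param-empty}) is exactly what the paper does, but your implementation has a systematic off-by-one error in the rank conditions that makes the parametrization wrong. Vanishing of the $(t-1)$-minors in $F_{t-1}(X_{\mathcal C(S)})$ means the middle block has rank $\le t-2$, not $\le t-1$; likewise, vanishing of the $t$-minors in $F_t(X_{(R_i\setminus S)\cup\mathcal C(S)})$ forces each overhang-plus-middle block to have rank $\le t-1$, not $\le t$. With your setup (a $(t-1)$-dimensional core $W=\mathrm{colspan}(M_0)$ and a single overflow vector $w$), a generic image point already has a middle block of rank $t-1$, so it violates the $(t-1)$-minor equations; the containment $\mathrm{im}(\varphi)\subseteq V(J_S+I_{t+1}(\tilde X))$ fails outright.

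Fixing the dimensions is not just a matter of replacing $t-1$ by $t-2$: a single shared overflow direction is then too restrictive. The correct picture is a $(t-2)$-dimensional core for the middle, a \emph{separate} extra direction for the left overhang (giving a $(t-1)$-dimensional span for left${}+{}$middle), and another separate extra direction for the right overhang. The paper packages this neatly by taking $M\in\mathbb C^{d\times t}$ and letting the middle block factor through $M[2,t-1]$, the left block through $M[1,t-1]$, and the right block through $M[2,t]$; the full matrix then automatically has rank $\le t$, which matches the added $I_{t+1}(\tilde X)$. Your diagnosis of the ``main obstacle'' is correspondingly misplaced: the role of $I_{t+1}(\tilde X)$ is not to force both overhangs into the same $t$-dimensional extension of a $(t-1)$-dimensional $W$, but to bound the total rank in the degenerate situations where the middle block drops below rank $t-2$ and the two independent overflow directions could otherwise push the global rank above $t$. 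A minor additional point: showing $V(I)$ is irreducible gives only that $\sqrt I$ is prime, so you do still need the radicality input (which, as you note, follows from Theorem~\ref{thm:minorsconsec} applied to $J_S+I_{t+1}(\tilde X)$).
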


\begin{proof}
    Suppose $S$ is nonempty and has combinatorial type $(u,v)$. 
    Note that $V(I_S + I_{t + 1}(\hat X)) \cong V(J_S + I_{t + 1}(\hat X))$ via removing zero columns from $X$ to obtain $\hat{X}$.
    By symmetry, it is sufficient to prove that $J_S + I_{t + 1}(\hat X)$ is prime when $S = \{(1,1), (1,2), \ldots, (1,u)\} \cup \{(2, k_2 - v + 1), \ldots, (2,k_2-1), (2, k_2)\}$.

We follow the same strategy as in Theorem~\ref{thm:param-empty} and parameterize the variety $V(J_S + I_{t + 1}(\hat X))$ as follows:
    \begin{align*}
        &\varphi \colon \CC^{d \times t} \, \times \, \CC^{(t - 1) \times u} \, \times \, (\CC^{(t - 2) \times ( k_2 - u - v)}\, \times \, \CC^{ k_2-u-v}) \, \times\, \CC^{(t - 1) \times v}  \to \CC^{d\times (2 k_2-u-v)}\\
        &(M, N_1, N_2, A, N_3)  \mapsto 
        \begin{pmatrix}
        M[1, t-1]\cdot
        N_1 & &
        M[2, t-1] \cdot N_2  \cdot D_A 
        & &
        M[2, t] \cdot
        N_3
        \end{pmatrix}
    \end{align*}
    where the matrix $D_A$ is as in \eqref{eq:D_alpha} and $M[a,b]$ denotes the submatrix of $M$ with columns $\{a,\ldots,b\}$.
    Here $A$ has dimensions $( k_2 - u - v) \times 1$ so $D_A$ has dimensions $( k_2 - u - v) \times 2( k_2 - u- v)$.
    The image is the horizontal concatenation of three matrices.

    It is clear that ${\rm im}(\varphi) \subseteq V(J_S + I_{t + 1}(\hat X))$. This follows from the following points:
    \begin{itemize}
  \item Since $M[1, t-1]$ and $M[2, t]$ have rank at most $t - 1$, the products $M[1, t - 1] \cdot N_1$ and $M[2, t] \cdot N_3$ each have rank at most $t - 1$.
  \item Since ${\rm rank}(M[2, t - 1]) \leq t - 2$, the product $M[2, t - 1] \cdot N_2 \cdot D_A$ has rank at most $t - 2$.
  \item Consecutive pairs of columns in the product $M[2, t - 1] \cdot N_2 \cdot D_A$ are linearly dependent due to the structure of $D_A$.
  \item The full matrix has rank at most $t$ since all its columns lie in the column span of $M$.
\end{itemize}
    We now show that the map $\varphi$ is surjective onto $V(J_S + I_{t + 1}(\hat X))$. Let $C \in V(J_S + I_{t + 1}(\hat X))$. We describe how to construct a tuple $(M, N_1, N_2,A, N_3)$ such that $\varphi(M, N_1, N_2, A, N_3) = C$.

    To determine the values $a_i$ for $i = 1, \ldots,  k_2 - u - v$, take the ratio of columns $(1,u+i)$ and $(2,u+i)$ of $C$ (the indexing is inherited from the matrix with zero columns).  
Let $C_1$, $C_2$, and $C_3$ denote the column spaces of the submatrices $C[(1,1), (2,k_2 - v)]$, $C[(1,u+1), (2,k_2 - v)]$, and $C[(1,u + 1), (2, k_2)]$, respectively.  
Observe that $C_2 \subseteq C_1 \cap C_3$.  
Let $\mathcal{B}_2$ be a basis of $C_1 \cap C_3$, and let $\mathcal{B}_1$ and $\mathcal{B}_3$ be bases for the orthogonal complements of $C_1 \cap C_3$ in $C_1$ and $C_3$, respectively.  
Note that the total number of basis vectors satisfies $|\mathcal{B}_1 \cup \mathcal{B}_2 \cup \mathcal{B}_3| \leq t$, and any of the sets $\mathcal{B}_i$ may be empty.

Set the first column of $M$, denoted $M[1]$, to be an element of $\mathcal{B}_1$, and the last column, $M[t]$, to be an element of $\mathcal{B}_3$.  
Let the remaining $t - 2$ middle columns of $M$ be chosen from the set $\mathcal{B}_1 \cup \mathcal{B}_2 \cup \mathcal{B}_3 \setminus \{M[1], M[t]\}$.
If any of the sets $\mathcal{B}_1$, $\mathcal{B}_3$, or $\mathcal B_1 \cup \mathcal B_2 \cup \mathcal B_3 \backslash \{M[1], M[t]\}$ is empty or does not contain enough elements, the remaining columns of $M$ can be chosen arbitrarily.  
Once $M$ is constructed, we can find the matrices $N_1, N_2, N_3$ with the appropriate coefficients to obtain~$C$.

The equality ${\rm im}(\varphi) = V(J_S + I_{t+1}(\hat{X}))$, together with the fact that ${\rm im}(\varphi)$ is the image of an irreducible variety under a polynomial map and is irreducible itself, completes the proof.
\end{proof}

We conclude this section with the following example, which shows that the ideal $I_S$ for $S\neq\emptyset$ is not necessarily prime when $d>t$.
\begin{example}\label{ex:bigger_d}
Recall the second ideal in Example~\ref{example_Sec4}, where $t = 4$, $ k_2 = 6$, and 
\[S = \{(1,1), (1,2), (2,5), (2,6)\}.\] 
By Theorem~\ref{thm:set_min_primes}, the ideal $I_S$ is minimal among the set of ideals $\{I_S : S \subseteq [k_1]\times[k_2]\}$.
If $d = 4$, the ideal is prime by Theorem~\ref{thm:ISprime}. 
But if $d = 5$, the ideal $I_S$ decomposes into two prime components:
\[ I(\bar H_1) = I_S + I_2(X_{\{1,2\} \times \{3,4\}})  \quad {\rm and} \quad  I(\bar H_2) = I_S + I_5(\hat{X}),\]
where 
\begin{align*}
    H_1 &\coloneqq H(S) \cup \{\text{2-subsets of } \{1,2\} \times \{3,4\}\}\\
    H_2 &\coloneqq H(S) \cup \{\text{all 5-subsets of } [2]\times[6]\}.
\end{align*}
The first ideal $ I(\bar H_1)$ captures the situation where the columns of $X_{\{(1,3),(1,4)\}}$ are dependent, i.e., the matrix $X_{\{1,2\} \times \{3,4\}}$ has rank $\leq 1$. 
Theorem~\ref{thm:ISprime} proves that the second ideal $I(\bar H_2)$ is prime; this ideal captures the situation where the columns of $X_{\{(1,3),(1,4)\}}$ are independent, and therefore $X$ has rank $\leq 4$.
\end{example}

\section{Dimension}\label{sec:dim}
In this section, we analyze the dimensions of $I_\emptyset$ for arbitrary $k_1$ and $d\geq t$, and $I_S$ for $S\neq \emptyset$ in the case when $k_1=2$ and $d=t$. We use the parametrizations given in Theorems \ref{thm:param-empty} and \ref{thm:ISprime}. We study the dimensions of generic fibers by exhibiting symmetries in the parameter space.

\begin{theorem}\label{thm:dim-empty}
    For $2\leq k_1$ and $2 \leq t \leq \min\{ k_2,d\}$, the dimension of $I_\emptyset$ is $(t-1)(d+ k_2) +  k_2(k_1-1) - (t-1)^2 = (t-1)(d+ k_2-t+1) +  k_2(k_1-1)$.
\end{theorem}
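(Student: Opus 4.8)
The plan is to compute the dimension of $V_\emptyset = V(I_\emptyset)$ using the parametrization $\varphi\colon X = \mathbb{C}^{d\times(t-1)}\times\mathbb{C}^{(t-1)\times\ell}\times\mathbb{C}^{\ell\times(k-1)} \to \mathbb{C}^{d\times k\ell}$ from Theorem~\ref{thm:param-empty}. Since $I_\emptyset$ is prime (Theorem~\ref{thm:param-empty}) and $V_\emptyset = \overline{\operatorname{im}(\varphi)} = \operatorname{im}(\varphi)$, we have $\dim V_\emptyset = \dim X - (\text{dimension of a generic fiber of }\varphi)$. The source has dimension $d(t-1) + (t-1)\ell + \ell(k-1)$, so it remains to show that the generic fiber of $\varphi$ has dimension $(t-1)^2$. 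First I would identify the obvious symmetries acting on the fiber: for any $g \in \mathrm{GL}_{t-1}(\mathbb{C})$, the triple $(Mg^{-1}, gN, A)$ satisfies $\varphi(Mg^{-1}, gN, A) = M g^{-1} g N D_A = MND_A = \varphi(M,N,A)$, so $\mathrm{GL}_{t-1}$ acts on every fiber, and this action has dimension $(t-1)^2$.

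The key steps, in order, are: (1) record $\dim X = (t-1)(d+\ell) + \ell(k-1)$; (2) show the $\mathrm{GL}_{t-1}$-action described above is free on a generic fiber, so that a generic fiber has dimension \emph{at least} $(t-1)^2$; (3) show a generic fiber has dimension \emph{at most} $(t-1)^2$, i.e. that the fiber is exactly a single $\mathrm{GL}_{t-1}$-orbit, by reconstructing $(M,N,A)$ from $C = \varphi(M,N,A)$ up to this action. For step (3), note $A$ is determined by $C$ outright (the entries $a_{ij}$ are ratios of columns $k(i-1)+1$ and $k(i-1)+1+j$ of $C$, as in the proof of Theorem~\ref{thm:param-empty}); for generic $(M,N)$ the matrix $M$ has full column rank $t-1$ and $N$ has full row rank $t-1$, so the column space of $C$ equals the column space of $M$, forcing $M$ to be determined up to right multiplication by $\mathrm{GL}_{t-1}$, and once $M$ is fixed, $N$ is uniquely determined (the system $M N = (\text{the columns }k(i-1)+1\text{ of }C)$ has a unique solution since $M$ has full column rank). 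Thus the fiber over a generic point is exactly one $\mathrm{GL}_{t-1}$-orbit. Combining steps (2) and (3), $\dim(\text{generic fiber}) = (t-1)^2$, and by the fiber-dimension theorem applied to the dominant morphism $\varphi\colon X \to V_\emptyset$ (valid since $X$ is irreducible), $\dim V_\emptyset = (t-1)(d+\ell) + \ell(k-1) - (t-1)^2$, which rearranges to $(t-1)(d+\ell-t+1) + \ell(k-1)$.

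The main obstacle I anticipate is step (3): one must verify carefully that for \emph{generic} $(M, N, A)$ the reconstruction really is unique up to the $\mathrm{GL}_{t-1}$-action, i.e. that there is a dense open subset of $X$ on which $M$ has full column rank and $N$ has full row rank (this is where $d \geq t$ and $\ell \geq t$ are used), and that the column space of the resulting $C$ does not collapse — in particular that multiplication by $D_A$ on the right does not drop the rank, which holds since $D_A$ has full row rank $\ell$. One should also confirm that the $\mathrm{GL}_{t-1}$-action is genuinely free (not just faithful) on the generic fiber: if $Mg^{-1} = M$ with $M$ of full column rank $t-1$, then $g = I_{t-1}$, so freeness is automatic on the locus where $M$ has full column rank. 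A minor bookkeeping point is to make sure the generic fiber is irreducible of the expected dimension so that the subtraction in the fiber-dimension theorem is exact rather than merely an inequality; since a single $\mathrm{GL}_{t-1}$-orbit is irreducible of dimension exactly $(t-1)^2$, this is immediate once step (3) is established.
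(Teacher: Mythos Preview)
Your proposal is correct and follows essentially the same approach as the paper: both use the parametrization $\varphi$ from Theorem~\ref{thm:param-empty}, count parameters to get the expected dimension $(t-1)(d+\ell)+\ell(k-1)$, identify the $\mathrm{GL}_{t-1}$-action $(M,N,A)\mapsto (MC,C^{-1}N,A)$, and then verify that the generic fiber is a single $\mathrm{GL}_{t-1}$-orbit by first recovering $A$ and then $(M,N)$ up to this action using that $M$ and $N$ generically have full rank $t-1$. The only cosmetic difference is that the paper argues directly by assuming $MND_A=M'N'D_{A'}$ and deducing $A=A'$, $M'=MC$, $N'=C^{-1}N$, whereas you phrase the same computation as a reconstruction of $(M,N,A)$ from $C$ up to the group action.
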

\begin{proof}
The expected dimension of $I_\emptyset$ is $(t-1)(d+ k_2) +  k_2(k_1-1)$, obtained by counting parameters in the map 
$$\varphi: \CC^{d\times (t-1)}\times \CC^{(t-1)\times  k_2}\times \CC^{ k_2\times (k_1-1)} \to \CC^{d\times k_1k_2} \quad \text{with}\quad(M,N,A)\mapsto M\cdot N\cdot D_A.$$
However, the general linear group $\mathrm{GL}_{t-1}$ acts on the parameters: multiplying $M$ on the right by an invertible $(t-1)\times(t-1)$ matrix $C$ (yielding $MC$) and $N$ on the left by $C^{-1}$ (yielding $C^{-1}N$) leaves the image unchanged. We will show that this is the only symmetry.\medskip

We proceed similarly to \cite[Lemma 3.1]{henry2025geometry}. Suppose that we have 
\begin{align} \label{eq: generic fiber}
    MND_A = M'N'D_{A'}
\end{align}
for two different choices of parameters. We will show that $A = A'$,
$M' = MC$, and $N' = C^{-1}N$ 
for a unique $C\in\text{GL}_{t-1}$.
First note that by rearranging the columns of $D_A$ and $D_{A'}$, these matrices can be viewed as
\begin{align*}
    &D_A = \begin{pmatrix}
    I_{k_2} & {\rm diag} (a_{11},\ldots,a_{ k_21}) & \cdots & {\rm diag} (a_{1,k_1-1},\ldots,a_{ k_2,k_1-1})
    \end{pmatrix}, \\
    &D_{A'} = \begin{pmatrix}
    I_{k_2} & {\rm diag} (a_{11}',\ldots,a_{ k_21}') & \cdots & {\rm diag} (a_{1,k_1-1}',\ldots,a_{ k_2,k_1-1}')
    \end{pmatrix}.
\end{align*}
Therefore by \eqref{eq: generic fiber}, $M N = M'N' = : P $ and $a_{ij} P_i = a_{ij}' P_i$ for each column $P_i$ of $P$ and each $j\in [k_1-1]$. Thus, $A=A'$. Note that generically $M$ and $N$ have rank $t-1$. So, we can re-write 
$$M = \begin{pmatrix}
    M_1\\
    M_2
\end{pmatrix},\quad M' = \begin{pmatrix}
    M'_1\\
    M'_2
\end{pmatrix},\quad
N = \begin{pmatrix}
    N_1 &
    N_2
\end{pmatrix},\quad N' = \begin{pmatrix}
    N'_1 &
    N'_2
\end{pmatrix},$$
where $M_1, M_1', N_1, N_1'\in \text{GL}_{t-1}$, $M_2, M_2'\in \CC^{(d-t+1)\times (t-1)}$, and $N_2, N_2'\in \CC^{(t-1)\times ( k_2 - t +1)}$. Now there is a unique $C\in\text{GL}_{t-1}$ such that $M_1C = M_1'$. By assumption, we also have $$\begin{pmatrix}
    M_1N_1 & M_1N_2\\
    M_2N_1 & M_2N_2
\end{pmatrix} = MN = M'N' = \begin{pmatrix}
    M_1'N_1' & M_1'N_2'\\
    M_2'N_1' & M_2'N_2'
\end{pmatrix},$$
which implies $C^{-1}N_1 = N_1'$, $C^{-1}N_2 = N_2'$, and $M_2C = M_2'$. Therefore, $MC = M'$ and $C^{-1}N = N'$, as desired. Hence, the generic fibers of $\varphi$ are $(t-1)^2$-dimensional, and the conclusion follows.
\end{proof}

\begin{proposition}
\label{thm:dim-non-empty-bound}
    If $k_1=2\leq d = t \leq  k_2$, and $S\neq\emptyset$, then the dimension of $I_S$ is bounded above by $t^2 + (t-1) k_2 - (t-1)^2 - 1$.
\end{proposition}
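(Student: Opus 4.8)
The plan is to use the parametrization $\varphi$ from Theorem~\ref{thm:ISprime}, applied in the case $d = t$. Recall that in this case $I_S$ is prime, so $\dim I_S = \dim V(I_S) = \dim \mathrm{im}(\varphi)$, and the latter equals the dimension of the (irreducible) source space minus the dimension of a generic fiber. First I would count the parameters in the source of $\varphi$. With $d = t$, the source is $\CC^{t \times t} \times \CC^{(t-1)\times u} \times (\CC^{(t-2)\times(\ell - u - v)} \times \CC^{\ell - u - v}) \times \CC^{(t-1)\times v}$, which has dimension
\[
t^2 + (t-1)u + (t-2)(\ell - u - v) + (\ell - u - v) + (t-1)v = t^2 + (t-1)\ell - (\ell - u - v),
\]
after simplification (the $u$ and $v$ terms combine cleanly once one collects coefficients). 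So the source dimension depends only on the single quantity $\ell - u - v$, i.e.\ the number of all-nonzero columns of the grid.

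Next I would bound the generic fiber dimension from below, which gives an upper bound on $\dim \mathrm{im}(\varphi)$, hence the stated inequality. The symmetries are analogous to the $\mathrm{GL}_{t-1}$-action in Theorem~\ref{thm:dim-empty}, but now there are \emph{three} overlapping ``middle'' blocks $M[1,t-1]$, $M[2,t-1]$, $M[2,t]$ sharing columns of $M$, each paired with its own coefficient matrix $N_1$, $N_2$, $N_3$. The key observation is that one can rescale/recombine the basis vectors used in each block: for instance, replacing $M \mapsto MC$ for suitable block-structured invertible $C$ and compensating in $N_1, N_2, N_3, A$ leaves the image unchanged. At minimum, multiplying the common column $M[1]$ (used only in the first block, via $\mathcal B_1$) by a nonzero scalar and compensating in $N_1$ is a $1$-dimensional symmetry; combined with the $\mathrm{GL}$-type symmetries acting on the genuinely shared columns, one gets a fiber of dimension at least $(t-1)^2 + 1$. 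Concretely, I would argue that the generic fiber of $\varphi$ contains a subvariety of dimension at least $(t-1)^2 + 1$ by exhibiting this explicit group action — it suffices to exhibit a positive-dimensional family, since we only need an upper bound on the image dimension. Subtracting:
\[
\dim I_S \leq \big(t^2 + (t-1)\ell - (\ell - u - v)\big) - \big((t-1)^2 + 1\big) \leq t^2 + (t-1)\ell - (t-1)^2 - 1,
\]
where the last step uses $\ell - u - v \geq 0$.

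The main obstacle I anticipate is pinning down the generic fiber precisely enough to extract the ``$+1$'' beyond the obvious $(t-1)^2$ coming from the global $\mathrm{GL}_{t-1}$-conjugation. Because the three submatrices $M[1,t-1]$, $M[2,t-1]$, $M[2,t]$ overlap in their column sets, the naive parameter count overcounts, and the extra slack corresponds to the freedom in how one distributes a shared basis vector (one lying in $\mathcal B_1$ or $\mathcal B_3$, not in the intersection $C_1 \cap C_3$) between $M$ and the relevant $N_i$. I would make this rigorous by fixing a generic point $C \in V(I_S)$, using the direct-sum decomposition $C_1 = (C_1 \cap C_3) \oplus \mathcal B_1$ and $C_3 = (C_1 \cap C_3) \oplus \mathcal B_3$ from the proof of Theorem~\ref{thm:ISprime}, and writing down the stabilizer of the tuple $(M, N_1, N_2, A, N_3)$ under these reparametrizations explicitly; its dimension is then a lower bound for the fiber dimension. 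Since only an inequality is claimed, I do not need the exact fiber dimension, which makes this step tractable even though an exact formula would require more care.
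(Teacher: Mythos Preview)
Your approach is essentially the paper's: count source parameters of $\varphi$ and subtract a lower bound on the generic fiber dimension coming from an explicit block-triangular action on $M$. Two points deserve correction.

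First, your source-dimension computation contains an arithmetic slip. Since $(t-2)(\ell-u-v)+(\ell-u-v)=(t-1)(\ell-u-v)$, the total is
\[
t^2+(t-1)u+(t-1)(\ell-u-v)+(t-1)v=t^2+(t-1)\ell,
\]
with no leftover $-(\ell-u-v)$ term; in particular the source dimension does \emph{not} depend on $u,v$ at all. Your false intermediate bound $t^2+(t-1)\ell-(\ell-u-v)-((t-1)^2+1)$ would, for $\ell-u-v>0$, contradict the tightness established in Theorem~\ref{thm:dim-non-empty}, so it is worth getting this right even though you then weaken to the stated inequality.

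Second, your sketch of the fiber symmetry is too vague to yield $(t-1)^2+1$: a scalar on $M[1]$ together with $\mathrm{GL}_{t-2}$ on the shared middle columns gives only $(t-2)^2+1$. The paper makes this explicit by acting with
\[
C=\begin{pmatrix}\lambda&0&0\\ b_1&B&b_2\\ 0&0&\mu\end{pmatrix},\qquad B\in\mathrm{GL}_{t-2},\ b_1,b_2\in\CC^{t-2},\ \lambda,\mu\in\CC^\times,
\]
so that $(MC)[1,t-1]=M[1,t-1]\begin{psmallmatrix}\lambda&0\\ b_1&B\end{psmallmatrix}$ and $(MC)[2,t]=M[2,t]\begin{psmallmatrix}B&b_2\\ 0&\mu\end{psmallmatrix}$, and compensating in $N_1,N_2,N_3$. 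This family has dimension $(t-2)^2+2(t-2)+2=(t-1)^2+1$, which is exactly the extra ``$+1$'' you were looking for; the off-diagonal vectors $b_1,b_2$ are the piece your sketch omits.
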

\begin{proof}
    The expected dimension of $I_S$ 
    is $t^2+ (t-1) k_2$, obtained by counting parameters in the map 
    \begin{align*}
        &\varphi \colon \CC^{t \times t} \, \times \, \CC^{(t - 1) \times u} \, \times \, (\CC^{(t - 2) \times ( k_2 - u - v)}\, \times \, \CC^{ k_2-u-v}) \, \times\, \CC^{(t - 1) \times v}  \to \CC^{t\times (2 k_2-u-v)}\\
        &(M, N_1, N_2, A, N_3)  \mapsto 
        \begin{pmatrix}
        M[1, t-1]\cdot
        N_1 & &
        M[2, t-1] \cdot N_2  \cdot D_A 
        & &
        M[2, t] \cdot
        N_3 
        \end{pmatrix}.
    \end{align*}
We show that for any choice of parameters $(M,N_1,N_2,A,N_3)$, we have 
\begin{align} \label{eq: fiber of I_S}
    \varphi(M,N_1,N_2,A,N_3) = \varphi(MC,\  \begin{pmatrix} \lambda & 0 \\ b_1 & B \end{pmatrix}^{-1} N_1,\  B^{-1}N_2, \ A, \  \begin{pmatrix}B & b_2 \\ 0 & \mu \end{pmatrix}^{-1}N_3),
\end{align}
where $$C = \begin{pmatrix}
    \lambda & 0 & 0 \\
    b_1 & B & b_2 \\
    0 & 0 & \mu
\end{pmatrix},\quad B\in \text{GL}_{t-2},\quad b_1, b_2 \in\CC^{t-2}, \quad \lambda, \mu\in\CC.$$
Let $M = \begin{pmatrix} m_1 & M_2 & m_3 \end{pmatrix}$, where $m_1,m_3 \in \mathbb{C}^{t}$ and $M_2 \in \mathbb{C}^{t \times (t-2)}$. Then 
\begin{align*}
    MC = \begin{pmatrix}\lambda m_1 + M_2 b_1 & M_2 B & M_2 b_2 + \mu m_3 \end{pmatrix}.
\end{align*}
Equation~\eqref{eq: fiber of I_S} follows from observing that
\begin{align*}
    (MC)[1,t-1] = \begin{pmatrix}m_1 & M_2 \end{pmatrix} \begin{pmatrix} \lambda & 0 \\ b_1 & B \end{pmatrix}, \quad 
    (MC)[2,t] = \begin{pmatrix} M_2 & m_3\end{pmatrix}\begin{pmatrix} B & b_2 \\ 0 & \mu \end{pmatrix}.
\end{align*}

This shows that the generic fiber of $\varphi$ has dimension at least $(t-2)^2 + 2(t-2) + 2= (t-1)^2 + 1$.
\end{proof}
\begin{theorem}\label{thm:dim-non-empty}
    If $k_1=2\leq d = t \leq  k_2$, and the ideal $I_S$ is minimal with $S\neq\emptyset$, then the dimension of $I_S$ is $t^2 + (t-1) k_2 - (t-1)^2 - 1$, i.e., the bound in Proposition~\ref{thm:dim-non-empty-bound} is tight.
\end{theorem}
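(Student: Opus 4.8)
The plan is to show that the generic fiber of the parametrization $\varphi$ from Theorem~\ref{thm:ISprime} has dimension \emph{exactly} $(t-1)^2+1$. Since $d=t$ forces $I_{t+1}(\tilde X)=0$, that theorem gives $\mathrm{im}(\varphi)=V(J_S)$ with domain of dimension $t^2+(t-1)\ell$; as $\dim I_S=\dim V(I_S)=\dim V(J_S)$, the standard theorem on dimensions of fibers of a dominant morphism will then yield $\dim I_S = t^2+(t-1)\ell-(t-1)^2-1$. Proposition~\ref{thm:dim-non-empty-bound} already shows the generic fiber has dimension $\ge (t-1)^2+1$, because it contains the orbit through a generic parameter of the group $G$ of block-triangular matrices $C$ displayed there, and this orbit has dimension $\dim G=(t-1)^2+1$ since $M\mapsto MC$ is injective for invertible $M$. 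So the real task is the reverse bound: for generic $Q\in V(J_S)$, every preimage of $Q$ lies in a single $G$-orbit.

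To this end I would first extract from $Q$ three subspaces of $\CC^t$: let $W_1$ be the column span of the submatrix of $Q$ formed by its left and middle blocks, $W_3$ the column span of the middle and right blocks, and $W_2:=W_1\cap W_3$. For a preimage $(M,N_1,N_2,A,N_3)$ these spans are contained in $\langle M[1,t-1]\rangle$, $\langle M[2,t]\rangle$, and $\langle M[2,t-1]\rangle$ respectively, and the key genericity claim is that for generic $Q$ one has $\dim W_1=\dim W_3=t-1$ and $\dim W_2=t-2$, so that these containments are equalities. This is exactly where the minimality bounds $1\le u\le v\le \ell-t+1$ enter: the left block supplies $u$ generic vectors of $\langle M[1,t-1]\rangle$ and the middle block a generic subspace of $\langle M[2,t-1]\rangle$ of dimension $\min(\ell-u-v,t-2)$, and $u+\min(\ell-u-v,t-2)\ge t-1$ precisely because $v\le\ell-t+1$; symmetrically for $W_3$ using $u\le\ell-t+1$. (If $u+v=\ell$ the middle block is empty, but then one checks $u,v\ge t-1$ and the left, resp. right, block already spans $W_1$, resp. $W_3$.) Because $W_1,W_2,W_3$ depend only on $Q$, \emph{every} preimage $(M',N_1',N_2',A',N_3')$ of a generic $Q$ satisfies $\langle M'[1,t-1]\rangle=W_1$, $\langle M'[2,t]\rangle=W_3$, $\langle M'[2,t-1]\rangle=W_2$; in particular the outer columns $m_1',m_t'$ of $M'$ lie in $W_1\setminus W_2$ and $W_3\setminus W_2$, and $M'[2,t-1]$ has full column rank $t-2$.

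Given two such preimages, I would argue as in the proofs of Theorems~\ref{thm:dim-empty} and~\ref{thm:ISprime}: the block form of $D_A$ forces $A=A'$ and collapses the middle blocks to $M[2,t-1]N_2=M'[2,t-1]N_2'$. Since $M[2,t-1]$ and $M'[2,t-1]$ both span $W_2$, there is a unique $B\in\mathrm{GL}_{t-2}$ with $M'[2,t-1]=M[2,t-1]B$; since $m_1'\in W_1=\langle m_1\rangle\oplus W_2$ there are unique $\lambda\ne 0$, $b_1$ with $m_1'=\lambda m_1+M[2,t-1]b_1$, and likewise unique $\mu\ne 0$, $b_2$ with $m_t'=\mu m_t+M[2,t-1]b_2$. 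Assembling these data into the matrix $C$ of Proposition~\ref{thm:dim-non-empty-bound} gives $M'=MC$, and then left-cancelling the full-column-rank matrices $M[1,t-1]$, $M[2,t-1]$, $M[2,t]$ in the three block identities forces $N_1',N_2',N_3'$ to be exactly the images of $N_1,N_2,N_3$ under the $G$-action in that proposition. Hence the fiber of $\varphi$ over a generic $Q$ is a single $G$-orbit of dimension $(t-1)^2+1$; choosing $Q$ in the intersection of this dense locus with the dense open locus where the fiber has dimension $\dim(\mathrm{domain})-\dim V(J_S)$ gives $\dim V(J_S)=t^2+(t-1)\ell-(t-1)^2-1$, which is the theorem.

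The main obstacle is the genericity bookkeeping of the second step — verifying that $W_1,W_2,W_3$ attain dimensions $t-1,t-2,t-1$ for generic $Q$. This is delicate because $u$ or $v$ may be as small as $1$, so no single block of $Q$ need span the relevant $W_i$ on its own; one must combine the left and middle (resp. middle and right) blocks and invoke $v\le\ell-t+1$ (resp. $u\le\ell-t+1$), and separately handle the degenerate configuration $u+v=\ell$ where the middle block vanishes. Everything else is linear algebra parallel to the already-established Theorems~\ref{thm:dim-empty}, \ref{thm:ISprime}, and~\ref{thm:dim-non-empty-bound}.
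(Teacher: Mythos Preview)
Your proposal is correct and follows essentially the same approach as the paper's proof: both show that the generic fiber of $\varphi$ is a single $G$-orbit by establishing that any two preimages $(M,\ldots)$ and $(M',\ldots)$ are related by a block-triangular $C\in G$, with minimality ($u,v\le \ell-t+1$, $u\ge 1$) supplying the needed rank/span conditions. Your packaging via the intrinsic subspaces $W_1,W_2,W_3$ of the image point is a clean geometric reformulation of the paper's more matrix-algebraic argument (which writes $M'=MC$ for a general $C\in\mathrm{GL}_t$ and then forces the off-block entries of $C$ to vanish using right-invertibility of the combined matrix $N'$), but the underlying content is the same.
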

\begin{proof}
    Consider the map $\varphi$ from Proposition~\ref{thm:dim-non-empty-bound}. We will show that the choice of $C$ in the proof of that result is unique. Assume $\varphi(M, N_1, N_2, A, N_3) = \varphi(M', N'_1, N'_2, A', N'_3)$ is a generic point in the image of $\varphi$. Since $M$ generically has rank $t$, we can relate $M' = MC$ for some unique $C\in \text{GL}_{t-2}$ where 
$$C = \begin{pmatrix}
    \lambda & a_1 & \nu \\
    b_1 & B & b_2 \\
    \omega & a_2 & \mu
\end{pmatrix}, \quad B \in \mathbb{C}^{(t-2)\times(t-2)},\quad b_1, b_2 \in \mathbb{C}^{t-2}, \quad a_1, a_2 \in \mathbb{C}^{1\times (t-2)}, \quad\lambda, \nu, \omega, \mu \in \mathbb{C}.
$$ 
Assume $M = \begin{pmatrix} m_1 & M_2 & m_3\end{pmatrix}$ and $M' = \begin{pmatrix} m_1' & M_2' & m_3'\end{pmatrix}$, where $m_1,m_3,m_1',m_3' \in \mathbb{C}^{t}$ and $M_2,M_2' \in \mathbb{C}^{t\times (t-2)}$. Then 
\begin{align} 
\label{eq: M'}
     M' = MC = \begin{pmatrix}
    \lambda m_1 + M_2 b_1 + \omega m_3  & m_1 a_1 + M_2 B + m_3 a_2 & \nu m_1  + M_2 b_2 + \mu m_3
\end{pmatrix}.
\end{align}
Now without loss of generality re-write $$\begin{pmatrix} m_1 & M_2 \end{pmatrix} = \begin{pmatrix} P \\ p \end{pmatrix} \text{ and } \begin{pmatrix} m_1' & M_2' \end{pmatrix} = \begin{pmatrix} P' \\ p' \end{pmatrix}, \text{ where } P, P'\in \text{GL}_{t-1}, \text{ and } p, p'\in\CC^{1\times(t-1)}.$$ Then there exists a unique matrix $G\in {\rm GL}_{t-1}$ such that $PG=P'$. 
Now, define
\begin{align*}
    N \coloneqq \begin{pmatrix} N_1 & \begin{array}{c} 0_{1 \times ( k_2-u-v)} \\ N_2 \end{array} \end{pmatrix}, \quad
    N' \coloneqq \begin{pmatrix} N_1' & \begin{array}{c} 0_{1 \times ( k_2-u-v)} \\ N_2' \end{array} \end{pmatrix}.
\end{align*}
Since $\begin{pmatrix} m_1 & M_2 \end{pmatrix} N = \begin{pmatrix} m_1' & M_2' \end{pmatrix} N'$, we have $PN = P'N'$ and $pN = p'N'$. From the first equality we get $N = G N'$. So, from the second equality, $p G N' =p' N'$. Since $S$ is minimal, we have $t-1 \leq  k_2-v$, and so the matrix $N'$ has a right inverse. This means that $p G  = p'$, therefore 
$\begin{pmatrix} m_1 & M_2 \end{pmatrix} G = \begin{pmatrix} m_1' & M_2' \end{pmatrix}$.
Thus, by \eqref{eq: M'},
\begin{align}\label{eq:m1M2G}
\begin{pmatrix} m_1 & M_2 \end{pmatrix} G = \begin{pmatrix}
    \lambda m_1 + M_2b_1 + \omega m_3  & m_1 a_1 + M_2 B + m_3 a_2
\end{pmatrix}.
\end{align}
The columns on the left-hand side of \eqref{eq:m1M2G} are linear combinations of $m_1$ and the columns of $M_2$, while the columns on the right-hand side of \eqref{eq:m1M2G} are linear combinations of $m_1$, the columns of $M_2$, and $m_3$, which proves that $\omega = 0$ and $a_2 = 0$. Similarly, one can prove that $\nu=0$ and $a_1 = 0$ by considering the matrices $\begin{pmatrix}
    M_2 & m_3
\end{pmatrix}$ and $\begin{pmatrix}
    M_2' & m_3'
\end{pmatrix}$. 
\end{proof}

\begin{example} \label{ex:dim-and-degree}
For the case $k_1 = 2$, $ k_2 = 6$, and $d=t = 4$, the degrees and dimensions corresponding to the different minimal combinatorial types are as follows.
    \begin{center}
    \small\begin{tabular}{c|c|c|c|c}
        Combinatorial type & Ideal representative & Number of such ideals & Dimension & Degree  \\
        \hline
        $(0,0)$ & $I_\emptyset$ & 1 & 27 & 34560 \\
        $(1,1)$ & $I_{\{(1,1), (2,2)\}}$ & 30 &  24 & 1410 \\
        $(1,2)$  & $I_{\{(1,1),(2,2),(2,3)\}}$ &  120 &  24 & 606 \\
        $(1,3)$  & $I_{\{(1,1), (2,2), (2,3), (2,4)\}}$ & 120 & 24 & 129 \\
        $(2,2)$ & $I_{\{(1,1), (1,2), (2,3), (2,4)\}}$ & 90 & 24 & 194 \\
        $(2,3)$  & $I_{\{(1,1), (1,2), (2,3), (2,4), (2,5)\}}$ & 120 & 24 & 15\\
        $(3,3)$ & $I_{\{(1,1), (1,2), (1,3), (2,4), (2,5), (2,6)\}}$ & 20 & 24 & 1 \\
    \end{tabular}
\end{center}

Note that for non-minimal combinatorial types, the bound proved in Proposition~\ref{thm:dim-non-empty-bound} may no longer be tight. For example, for the combinatorial type $(0, 6)$, giving rise to prime but non-minimal ideals $I_S$, the dimension is 21.\footnote{\url{https://github.com/yuliaalexandr/decomposing-conditional-independence-ideals-with-hidden-variables}}
\end{example}

\noindent{\bf Future work.} 
In this paper, we provided a minimal prime decomposition of the ideal $\mathcal{I}_C$ in the case $k_1 = 2$ and $d = t$. There are several natural extensions of the results presented here.

First, in Theorems~\ref{thm:dim-empty} and~\ref{thm:dim-non-empty}, we established the dimensions of the ideal $I_\emptyset$ and the minimal ideals $I_S$ (for $S \neq \emptyset$, with $k_1 = 2$ and $d = t$). The formulas show that all ideals $I_S$ with $S \neq \emptyset$ have the same dimension as long as $I_S$ is minimal. A natural next step is to investigate their degrees and seek exact formulas. We present a computation of these degrees in Example~\ref{ex:dim-and-degree}.

Second, while the decomposition given in Theorem~\ref{thm: decomposition theorem} applies for general values of $k$ and $d$, the ideals $I_S$ may not be prime in general, as illustrated in Examples~\ref{ex:bigger_d} and~\ref{ex:bigger_k}. Hence, another natural direction is to determine the full primary decomposition of the ideals $I_S$ and $\mathcal{I}_C$ in the general setting.

Third,  matroids arising from grids are studied in~\cite{liwski2024paving}. Their connection to the minimal primes of $\mathcal{I}_C$ may yield combinatorial insight into the structure of these ideals.

\medskip 
\noindent
\textbf{Acknowledgments.}~We thank Aldo Conca for directing us to \cite{seccia}, and Emiliano Liwski for sharing the parametrization presented in Theorem~\ref{thm:ISprime}. This project originated at the Workshop for Women in Algebraic Statistics, held at St John's College, Oxford, from July 8 to July 18, 2024. We are grateful to Jane Ivy Coons for organizing the workshop and creating a collaborative environment that made this work possible. F.M. gratefully acknowledges the hospitality of the Mathematics Department at Stockholm University during her research visit, where part of this work was carried out, and thanks Samuel Lundqvist for his support during the visit. 

\medskip
\noindent
\textbf{Funding.}~The Workshop for Women in Algebraic Statistics was supported by St John's College, Oxford, the L'Oreal-UNESCO for Women in Science UK and Ireland Rising Talent Award in Mathematics and Computer Science (awarded to Jane Coons),
and the UKRI/EPSRC Additional Funding Programme for the Mathematical Sciences. F.M. was partially supported by the FWO grants G0F5921N (Odysseus) and G023721N, and the KU Leuven grant iBOF/23/064.  P.S. was supported by the Vanier Canada Graduate Scholarship, NSERC Discovery Grant DGECR-2020-00338, and the Canada CIFAR AI Chair grant awarded to Elina Robeva. T.Y. was supported by NSF grants DGE-2241144 and DMS-1840234.

\bibliographystyle{abbrv}
\bibliography{revision-oct-14.bib}

\medskip
{\small\noindent {\bf Authors' addresses}
\medskip

\noindent{Yulia Alexandr,  UCLA}\hfill {\tt yulia@math.ucla.edu}
\\ 
\noindent{Kristen Dawson, San Francisco State University} \hfill {\tt kdawson1@sfsu.edu}
\\
\noindent{Hannah Friedman, UC Berkeley} \hfill {\tt hannahfriedman@berkeley.edu}
\\
\noindent{Fatemeh Mohammadi, 
KU Leuven} \hfill {\tt fatemeh.mohammadi@kuleuven.be}
\\ 
\noindent{Pardis Semnani, University of British Columbia}\hfill {\tt  psemnani@math.ubc.ca} 
\\ 
\noindent{Teresa Yu, University of Michigan} \hfill {\tt twyu@umich.edu}
}\\

\end{document}